\def\today{\ifcase \month \or
   January \or February \or March \or April \or
   May \or June \or July \or August \or
   September \or October \or November \or December \fi
   \space\number\day , \number\year}
  \newcommand\@dotsep{4.5}
  \def\@tocline#1#2#3#4#5#6#7{\relax
     \ifnum #1>\c@tocdepth 
     \else
     \par \addpenalty\@secpenalty\addvspace{#2}%
     \begingroup \hyphenpenalty\@M
     \@ifempty{#4}{%
     \@tempdima\csname r@tocindent\number#1\endcsname\relax
        }{%
         \@tempdima#4\relax
           }%
      \parindent\z@ \leftskip#3\relax \advance\leftskip\@tempdima\relax
      \rightskip\@pnumwidth plus1em \parfillskip-\@pnumwidth
       #5\leavevmode\hskip-\@tempdima #6\relax
       \leaders\hbox{$\m@th
       \mkern \@dotsep mu\hbox{.}\mkern \@dotsep mu$}\hfill
       \hbox to\@pnumwidth{\@tocpagenum{#7}}\par
       \nobreak
        \endgroup
         \fi}
\begin{document}


\makeatletter
\@addtoreset{figure}{section}
\def\thefigure{\thesection.\@arabic\c@figure}
\def\fps@figure{h,t}
\@addtoreset{table}{bsection}

\def\thetable{\thesection.\@arabic\c@table}
\def\fps@table{h, t}
\@addtoreset{equation}{section}
\def\theequation{
\arabic{equation}}
\makeatother

\newcommand{\bfi}{\bfseries\itshape}

\newtheorem{theorem}{Theorem}
\newtheorem{acknowledgment}[theorem]{Acknowledgment}
\newtheorem{corollary}[theorem]{Corollary}
\newtheorem{definition}[theorem]{Definition}
\newtheorem{example}[theorem]{Example}
\newtheorem{lemma}[theorem]{Lemma}
\newtheorem{notation}[theorem]{Notation}
\newtheorem{proposition}[theorem]{Proposition}
\newtheorem{remark}[theorem]{Remark}
\newtheorem{setting}[theorem]{Setting}
\newtheorem{hypothesis}[theorem]{Standing Hypothesis}

\numberwithin{theorem}{section}
\numberwithin{equation}{section}


\def\C{\mathbb{C}} 
\def\N{\mathbb{N}} 
\def\R{\mathbb{R}} 
\def\T{\mathbb{T}} 
\def\Z{\mathbb{Z}}
\def\A{{\mathcal A}} 
\def\F{\mathcal F} 
\def\H{\mathcal H} 
\def\K{\mathcal K}
\def\G{\mathcal{G}}
\def\Int{\mathfrak{Int}}
\def\la{\langle} 
\def\ra{\rangle} 
\def\Op{\mathfrak{Op}} 
\def\X{\mathcal X}
\def\GG{\mathfrak{G}}
\def\p{\parallel} 
\def\si{\sigma}
\def\Si{\Sigma}
\def\td{\star}
\def\({\left(}
\def\){\right)}
\def\[{\left[}
\def\]{\right]}
\def\<{\left\langle}
\def\>{\right\rangle}


\renewcommand{\1}{{\bf 1}}
\newcommand{\Ad}{{\rm Ad}}
\newcommand{\Alg}{{\rm Alg}\,}
\newcommand{\alg}{*{\rm alg}}
\newcommand{\Aut}{{\rm Aut}\,}
\newcommand{\ad}{{\rm ad}}
\newcommand{\Borel}{{\rm Borel}}
\newcommand{\botimes}{\bar\otimes}
\newcommand{\Ci}{{\mathcal C}^\infty}
\newcommand{\Cint}{{\mathcal C}^\infty_{\rm int}}
\newcommand{\Cpol}{{\mathcal C}^\infty_{\rm pol}}
\newcommand{\Der}{{\rm Der}\,}
\newcommand{\de}{{\rm d}}
\newcommand{\ee}{{\rm e}}
\newcommand{\End}{{\rm End}\,}
\newcommand{\ev}{{\rm ev}}
\newcommand{\id}{{\rm id}}
\newcommand{\ie}{{\rm i}}
\newcommand{\ind}{\mathop{\rm ind}}
\newcommand{\GL}{{\rm GL}}
\newcommand{\gl}{{{\mathfrak g}{\mathfrak l}}}
\newcommand{\Hom}{{\rm Hom}\,}
\newcommand{\Img}{{\rm Im}\,}
\newcommand{\Ind}{{\rm Ind}}
\newcommand{\Ker}{{\rm Ker}\,}
\renewcommand{\L}{L}
\newcommand{\Lie}{\text{\bf L}}
\newcommand{\Mt}{{{\mathcal M}_{\text t}}}
\newcommand{\m}{\text{\bf m}}
\newcommand{\pr}{{\rm pr}}
\newcommand{\Ran}{{\rm Ran}\,}
\renewcommand{\Re}{{\rm Re}\,}
\newcommand{\spa}{{\rm span}\,}
\newcommand{\Tr}{{\rm Tr}\,}

\newcommand{\SQ}{\text{\rm SQ}}

\newcommand{\UC}{{{\mathcal U}{\mathcal C}}}
\newcommand{\UCb}{{{\mathcal U}{\mathcal C}_b}}
\newcommand{\LUCb}{{{\mathcal L}{\mathcal U}{\mathcal C}_b}}
\newcommand{\RUCb}{{{\mathcal R}{\mathcal U}{\mathcal C}_b}}
\newcommand{\LUCl}{{{\mathcal L}{\mathcal U}{\mathcal C}}_{\rm loc}}
\newcommand{\RUCl}{{{\mathcal R}{\mathcal U}{\mathcal C}}_{\rm loc}}
\newcommand{\UCl}{{{\mathcal U}{\mathcal C}}_{\rm loc}}

\newcommand{\CC}{{\mathbb C}}
\newcommand{\RR}{{\mathbb R}}
\newcommand{\TT}{{\mathbb T}}

\newcommand{\Ac}{{\mathcal A}}
\newcommand{\Bc}{{\mathbb B}}
\newcommand{\Cc}{{\mathcal C}}
\newcommand{\Dc}{{\mathcal D}}
\newcommand{\Ec}{{\mathcal E}}
\newcommand{\Fc}{{\mathcal F}}
\newcommand{\Hc}{{\mathcal H}}
\newcommand{\Jc}{{\mathcal J}}
\newcommand{\Kc}{{\mathcal K}}
\newcommand{\Lc}{{\mathcal L}}
\renewcommand{\Mc}{{\mathcal M}}
\newcommand{\Nc}{{\mathcal N}}
\newcommand{\Oc}{{\mathcal O}}
\newcommand{\Pc}{{\mathcal P}}
\newcommand{\Qc}{{\mathcal Q}}
\newcommand{\Rac}{{\mathcal R}}
\newcommand{\Sc}{{\mathcal S}}
\newcommand{\Tc}{{\mathcal T}}
\newcommand{\Uc}{{\mathcal U}}
\newcommand{\Vc}{{\mathcal V}}
\newcommand{\Wig}{{\mathcal W}}
\newcommand{\Xc}{{\mathcal X}}
\newcommand{\Yc}{{\mathcal Y}}

\newcommand{\Bg}{{\mathfrak B}}
\newcommand{\Dg}{{\mathfrak D}}
\newcommand{\Fg}{{\mathfrak F}}
\newcommand{\Gg}{{\mathfrak G}}
\newcommand{\Ig}{{\mathfrak I}}
\newcommand{\Jg}{{\mathfrak J}}
\newcommand{\Lg}{{\mathfrak L}}
\newcommand{\Pg}{{\mathfrak P}}
\newcommand{\Rg}{{\mathfrak R}}
\newcommand{\Sg}{{\mathfrak S}}
\newcommand{\Xg}{{\mathfrak X}}
\newcommand{\Yg}{{\mathfrak Y}}
\newcommand{\Zg}{{\mathfrak Z}}

\newcommand{\ag}{{\mathfrak a}}
\newcommand{\bg}{{\mathfrak b}}
\newcommand{\dg}{{\mathfrak d}}
\renewcommand{\gg}{{\mathfrak g}}
\newcommand{\hg}{{\mathfrak h}}
\newcommand{\kg}{{\mathfrak k}}
\newcommand{\mg}{{\mathfrak m}}
\newcommand{\n}{{\mathfrak n}}
\newcommand{\og}{{\mathfrak o}}
\newcommand{\pg}{{\mathfrak p}}
\newcommand{\sg}{{\mathfrak s}}
\newcommand{\tg}{{\mathfrak t}}
\newcommand{\ug}{{\mathfrak u}}
\newcommand{\zg}{{\mathfrak z}}

\newcommand{\ZZ}{\mathbb Z}
\newcommand{\NN}{\mathbb N}
\newcommand{\BB}{\mathbb B}
\newcommand{\HH}{\mathbb H}

\newcommand{\ep}{\varepsilon}

\makeatletter
\title[Symbol calculus of square-integrable operator-valued maps]{Symbol calculus of 
square-integrable operator-valued maps}
\author{Ingrid Belti\c t\u a,  
Daniel Belti\c t\u a 
and Marius M\u antoiu}
\address{Institute of Mathematics ``Simion Stoilow'' 
of the Romanian Academy, 
P.O. Box 1-764, Bucharest, Romania}
\email{ingrid.beltita@gmail.com, Ingrid.Beltita@imar.ro}
\address{Institute of Mathematics ``Simion Stoilow'' 
of the Romanian Academy, 
P.O. Box 1-764, Bucharest, Romania}
\email{beltita@gmail.com, Daniel.Beltita@imar.ro}
\address{Departamento de Matem\'aticas, Universidad de Chile, Las Palmeras 3425,
Casilla 653, Santiago, Chile}
\email{mantoiu@uchile.cl}
\keywords{symbol calculus, Hilbert algebra, Berezin-Toeplitz operator, square-integrable representation} 
\subjclass[2000]{Primary 46L65; Secondary 22E66, 35S05, 46K15, 46H30}
\thanks{The research of I. Belti\c t\u a and D. Belti\c t\u a has been partially supported by the Grant
of the Romanian National Authority for Scientific Research, CNCS-UEFISCDI,
project number PN-II-ID-PCE-2011-3-0131. 
M. M\u antoiu has been supported by the Fondecyt Project 1120300 and the N\'ucleo Milenio de F\'isica Matem\'atica RC120002.}
\makeatother

\begin{abstract} 
We develop an abstract framework for the investigation of quantization and dequantization procedures 
based on orthogonality relations that do not necessarily involve group representations. 
To illustrate the usefulness of our abstract method we show that it behaves well 
with respect to the infinite tensor products. 
This construction subsumes examples coming from the study of magnetic Weyl calculus, 
the magnetic pseudo-differential Weyl calculus,   
the metaplectic representation on locally compact abelian groups,   
irreducible representations associated with finite-dimensional coadjoint orbits of some 
special infinite-dimensional Lie groups,  
and the square-integrability properties shared by arbitrary irreducible representations of nilpotent Lie groups. 
\end{abstract}

\maketitle

\tableofcontents

\section{Introduction}\label{intro}

Square-integrable representations of locally compact groups  
play a well-known role in Lie theory, representation theory, and their applications to physics. 
The present paper is devoted to developing a set of techniques applicable 
to operator valued maps on measure spaces $\pi\colon(\Sigma,\mu)\to\BB(\Hc)$ 
that satisfy a square integrability property analogous to that of locally compact group representations 
(see \eqref{porc} below) 
although $\pi$ may not be a group representation and $\mu$ may not be a Haar measure. 
This investigation was motivated by several situations when actually 
$\Sigma$ is a group that fails to be locally compact so it does not admit any Haar measure 
(as for instance in the study of the canonical commutation relations 
where one has been looking for suitable substitutes of the group algebra 
for inductive limit groups \cite{Gru97,GN09,GN13}), 
or when $\Sigma$ is locally compact but $\pi$ is not even a projective group representation 
(see for instance the orthogonality relations for irreducible representations 
of nilpotent Lie groups: \cite{HM79,Pe,Wo13}, and the references therein; 
or the magnetic Weyl calculus \cite{MP1,IMP, MP5,BB1,BB11a}).

From a more technical point of view, 
this article, as many others, is concerned with symbol calculus, under certain circumstances also called {\it quantization},
seen as a systematic way to associate operators in some
infinite-dimensional vector space with functions almost everywhere defined in a suitable related set $\Sigma$ 
endowed with a measure. 
We have in mind mainly operators acting in a Hilbert space $\H$,
but other type of topological vector spaces will also be considered. 
Moreover, in order to define symbols for arbitrary bounded linear operators on $\H$, 
we will also need (in Subsection~\ref{platica}) 
extensions of the symbol calculus beyond spaces of functions on $\Sigma$, 
in the same way as the classical Weyl calculus on $\RR^n$ needs to be extended from 
functions to tempered distributions on $\RR^{2n}$.

Among the different strategies to start and 
motivate 
a symbol calculus, there are two, dual to each other.
The first one, inspired by Weyl's quantization procedure, consists in associating to each point $s$ of the space $\Sigma$
a bounded linear operator $\pi(s)$ in $\H$. 
This mapping
$s\mapsto\pi(s)$, while not supposed to be unitarily valued or to possess group-like properties,
would benefit from some regularity requirements.
Boundedness and weak continuity are good properties, 
and yet a square integrability condition with respect to some measure
$\mu$ on $\Sigma$ (generalizing the notion of square integrable representation of a group)
is the best starting point. 
Then operators $\Pi(f)$ are associated to suitable functions $f$ on
$\Si$ by integration techniques (cf. (\ref{guanaco})), and square integrability plays an important role in identifying Hilbert-Schmidt operators as corresponding by quantization to $L^2$-functions. 
Simple examples show that not all the elements in $L^2(\Si,\mu)$ need to be involved.

The dual approach is to give a priori the symbols (functions defined on $\Sigma$) of all the rank-one operators.
Then the symbols of more general operators are obtained by superposition, modelled by integration on $\Sigma$,
followed eventually by extension techniques. 
If suitably implemented, the construction is essentially the inverse of the one described above.
But this is achieved only after the formalism has been extended; the operators $\pi(s)$ are almost never Hilbert-Schmidt.

Many classes of operators form $^*$-algebras under operator multiplication and taking adjoints.
Clearly, it is desirable to use the quantization
to induce isomorphic versions of these $^*$-algebras on classes of symbols. 
As a matter of fact, due to the square integrability
assumption, one obtains compatible scalar products on the $^*$-algebras making them Hilbert algebras. 
This already makes available
extension techniques permitting the treatment of symbols not associated to Hilbert-Schmidt operators. 
But most of the known examples
strongly suggest the existence of an extra mathematical structure, resulting in Gelfand triples both at the level of vectors
and at the level of symbols, suitably interconnected. 
One could simply recall the role played in pseudodifferential theory
by the Schwartz space and its dual, the space of tempered distributions. 
Another example, leading to Gelfand triples of Banach spaces,
is the Segal algebra, available on locally compact groups. 
In our general framework we will indicate
a systematic way to construct Gelfand triples connected to 
the symbol calculus associated to the family $\{\pi(s)\mid s\in\Si\}$;
that will turn out to have a rich algebraic content. 

The object of Sections~\ref{vaca} and \ref{vacuta} is the construction of the symbol calculus associated to 
the data $\(\Si,\mu,\pi,\H\)$, 
where $\Sigma$ is a space endowed with a measure $\mu$. 
That space serves as a family of indices for
a set of bounded operators $\{\pi(s)\!\mid\!s\in\Si\,\}$ in the Hilbert space $\H$.
We do not assume that $\pi(s)$ is unitary and we do not require anything about the product $\pi(s)\pi(t)$ for $s,t\in\Si$. 
The map $\pi(\cdot)$ is assumed bounded and weakly continuous. 
The main requirement is relation (\ref{porc}), 
a condition of square integrability extending a well-known concept from group representation theory \cite{Di}.

In Section \ref{vaca} we show that the class of square-integrable families of operators 
is closed under some basic operations as the compressions and the tensor products. 
We also show that these families are irreducible in the sense that their commutant is trivial and 
they do not have any nontrivial common invariant subspace, 
which suggests the interesting problem of pointing out the topological groups for which every unitary irreducible representation admits a measure on the group for which the representation is square integrable; see for instance Corollaries \ref{irred2} and \ref{irred3} for 
answers to this question in the case of compact and nilpotent Lie groups, respectively. 

In Section \ref{vacuta}, the first purpose is to raise the family $\pi$, essentially by integration, to a correspondence $f\mapsto\Pi(f)$ sending a closed subspace of $\L^2(\Si;\mu)$ to the ideal of all Hilbert-Schmidt operators in $\H$. Actually the fact that $\Pi$ is
``an integrated form'' of $\pi$ (in the spirit of group representation theory) is only seen a posteriori; the initial construction is just
based on the the ``representation coefficient'' map $\Phi$. 
The linear maps $\Pi,\Phi$ and $\Lambda$ are isomorphisms of $H^*$-algebras. 
By transport of structure via these isomorphisms, 
one then defines classes of trace-class, compact and bounded-type symbols forming Banach
$^*$-algebras; Radon measures on $\Si$ can also be incorporated when $\Sigma$ is a locally compact space.  

We also develop the dequantization procedure  for the operator calculus. 
That is, we develop methods for recovering the symbol of a given operator. 
In order to do that in an effective way, we need to explore new spaces of symbols 
and their natural composition law that recovers the twisted convolution 
in the case of group representations and corresponds to the composition of operators 
in the representation space. 

Section~\ref{graur_sect} deals with the Gelfand triples that occur in our general framework, 
in connection with suitable dense subspaces of the Hilbert space under consideration. 
This is suggested by the classical example of the Schwartz space $\Sc(\RR^n)$ 
of rapidly decreasing functions 
on $\RR^n$, which is continuously and densely embedded into the Hilbert space $L^2(\RR^n)$ 
and is closely related to the square-integrable family of unitary operators 
$\RR^n\times\RR^n\to\BB(L^2(\RR^n))$ defined by the Weyl system. 
We study the abstract version of the operators $\Sc(\RR^n)\to\Sc'(\RR^n)$ 
and some related structures, which provide a unifying perspective 
on different types of applications in Section~\ref{marlita}. 

In Section~\ref{brabusca} we merely develop some very basic aspects of the Berezin-Toeplitz quantization 
in our abstract framework, in order to suggest how this important topic fits in our picture. 

Section~\ref{next} explores the infinite tensor products of square-integrable families of operators, 
a circle of ideas that plays an important role in the representation theory of canonical commutation relations (CCR)
with infinitely many degrees of freedom; see for instance \cite{KM65,Heg69,Heg70,Re70}. 
We prove that these infinite tensor products always have a certain property of approximate square integrability 
(Theorem~\ref{inf}) 
and then we also discuss the Berezin-Toeplitz quantization to a very limited extent, 
which already suggests however that several interesting problems arise in this area. 
It is noteworthy that the relationship between the CCR and the infinite tensor products was 
also studied studied in \cite{Gru97,GN09,GN13} from a different perspective. 

Finally, in Section~\ref{marlita} we briefly present four topics from the earlier literature 
where one can find special cases of the general ideas developed in the present paper:  
1. the magnetic pseudo-differential Weyl calculus;  
2. the study of the metaplectic representation on locally compact abelian groups;  
3. irreducible representations associated with finite-dimensional coadjoint orbits of some 
special infinite-dimensional Lie groups; 
4. the square-integrability properties shared by arbitrary irreducible representations of nilpotent Lie groups. 

It would also be quite interesting to understand the relationship between our abstract approach 
and the Weyl and Berezin calculus on bounded symmetric domains as developed for instance 
in \cite{UU94} and \cite{AU07}. 

\subsection*{Preliminary conventions and notations}
A convenient reference for square-integrable representations of locally compact groups 
and their role in representation theory is \cite[App. VII]{Ne00}; see also the references therein. 

If $\Si$ is a topological space (always  Hausdorff), we set $BC(\Si)$ for the $C^*$-algebra of all bounded continuous complex-valued functions on $\Si$. 
If $\Si$ is locally compact we write $C_0(\Si)$ for the $C^*$-algebra of the continuous functions vanishing at infinity. 
For any measure $\mu$ on $\Si$ and $q\in[1,\infty]$, we denote by $\L^q(\Si;\mu)$ the usual Lebesgue space of order $q$ on $(\Si,\mu)$.

For two complex Hausdorff locally convex spaces $\mathcal E$ and $\mathcal F$, 
we will write $\mathcal E\otimes \mathcal F$ for the algebraic tensor product. 
When endowed with the projective topology, that space will be denoted by $\mathcal E\otimes_p\!\mathcal F$ and
its completion in this topology by $\mathcal E\widehat\otimes_p \mathcal F$. 
Analogously, $\mathcal E\widehat\otimes_i \mathcal F$ will be the completion of
$\mathcal E\otimes_i\!\mathcal F$, which is $\mathcal E\otimes \mathcal F$ endowed with the injective topology.

Under the same assumptions on $\mathcal E$ and $\mathcal F$, we denote by $\mathbb B(\mathcal E,\mathcal F)$ the vector space
of all linear {\it continuous} operators from $\mathcal E$ into $\mathcal F$ and use the abbreviation $\mathbb B(\mathcal E):=\mathbb B(\mathcal E,\mathcal E)$.
Then $\mathcal E':=\mathbb B(\mathcal E,\C)$ is the (topological) dual of $\mathcal E$.

We recall from \cite{Di}  that a Hilbert algebra is a $^*$-algebra $(\mathscr A,\#,^\#)$ endowed with a scalar product
$\<\cdot,\cdot\>:\mathscr A\times\mathscr A\rightarrow\mathbb C$
such that
\begin{enumerate}
\item
one has $\,\<g^\#,f^\#\>=\<f,g\>$ for all $\,f,g\in\A$\,,
\item
one has $\,\<f\#g,h\>=\<g,f^\#\# h\>$ for all $\,f,g,h\in\A$\,,
\item
for every $g\in\mathscr A$, the map $\,{\sf L}_g:\mathscr A\rightarrow\mathscr A,\,{\sf L}_g(f):=g\#\!f$ is continuous.
\item
$\mathscr A\#\mathscr A$ is total in $\mathscr A$.
\end{enumerate}
A complete Hilbert algebra is called an \emph{$H^*$-algebra}.

Clearly one also has $\,\<f\#g,h\>=\<f,h\# g^\#\>\,$ for all $\,f,g,h\in\mathscr A$
and the map ${\sf R}_g:\mathscr A\rightarrow\mathscr A$ given by $\,{\sf R}_g(f):=f\#g$ is also continuous; therefore
$\mathscr A\times\mathscr A\overset{\#}{\rightarrow}\mathscr A$ is separately continuous.

To give some basic examples, let us fix a complex Hilbert space $\H$\,;
by convention the scalar product $\<\cdot,\cdot\>$ is anti-linear
in the second variable and we denote by $\overline{\H}$ the conjugate space of $\H$.
By Riesz Theorem, the dual $\H'$ of $\H$ is canonically antilinearly
isomorphic to $\H$, so there is a linear isomorphism permitting the identification of $\overline\H$ with $\H'$.
Recall that the space $\mathbb B_2(\H)$ of Hilbert-Schmidt operators on $\H$ forms a
$^*$-ideal in $\mathbb B(\H)$ and a Hilbert space with the scalar product $\<S,T\>_{\mathbb B_2(\H)}:=\Tr\(ST^*\)$.
Actually $\mathbb B_2(\H)$ is a $H^*$-algebra; the subspace $\mathbb B_2(\H)\mathbb B_2(\H)$ (coinciding with the
$^*$-ideal $\mathbb B_1(\H)$ of all trace-class operators) is dense in $\mathbb B_2(\H)$.

Let us denote by $\Lambda$ the canonical unitary operator 
\begin{equation}\label{Lambda_def}
\Lambda\colon\H\widehat\otimes\overline{\H}\to\mathbb B_2(\H),\quad \Lambda(u\otimes v):=\lambda_{u,v}:=
\la \cdot,v\ra u,
\end{equation}
where $\H\widehat\otimes\overline{\H}$ stands for the Hilbert completion of the algebraic tensor product $\H\otimes\overline\H$.
On the space $\H\otimes\overline\H$ we consider the unique structure of  $H^*$-algebra such that 
the above operator $\Lambda$ is an $H^*$-algebras isomorphism.
Its restriction
$\Lambda:\H\otimes\overline{\H}\rightarrow\mathbb F(\H)$ is a Hilbert algebra isomorphism
between the algebraic tensor product and the finite rank operators.
We record for further use some relations valid for $u,v,u',v'\in\H$ and $S\in\mathbb B(\H)$\,:
\begin{gather}
\label{catar}
S\lambda_{u,v}=\lambda_{Su,v}\,,\quad \lambda_{u,v}S=\lambda_{u,S^*v}\,,\quad \lambda_{u,v}\,\lambda_{u',v'}=\<u',v\>\lambda_{u,v'}\,, \\
\label{camatar}
\lambda_{u,v}^*=\lambda_{v,u}\,,\quad \Tr\(\lambda_{u,v}\)=\<u,v\>.
\end{gather}
Very often, besides the norm topology of a Hilbert algebra $\mathscr A$, there is another
finer locally convex topology.
We recall that a Fr\'echet $^*$-algebra is a $^*$-algebra $(\mathscr A,\#,^\#)$ with a Fr\'echet locally convex space topology
$\mathscr T$ such that the involution $\mathscr A\ni f\rightarrow f^\#\in \mathscr A$ is continuous and the product
$\mathscr A\times\mathscr A\ni(f,g)\rightarrow f\# g\in\mathscr A$ is separately continuous.
Then a \emph{Fr\'echet-Hilbert algebra} $(\mathscr A,\#,^\#,\mathscr T,\<\cdot,\cdot\>)$ is both a Fr\'echet $^*$-algebra and a Hilbert
algebra, the topology $\mathscr T$ being finer that the topology associated to the scalar product.

\section{Square-integrable operator-valued maps}\label{vaca}

Let us fix a complex Hilbert space $\H$, a Borel space $\Si$ with a $\si$-algebra $\mathcal M$ and a positive measure $\mu$ on $\Si$. 
The set of measurable complex-valued functions on $\Si$ will be denoted by $\mathscr M(\Si)$.
We assume that $\pi:\Si\rightarrow\mathbb B(\H)$ is a weakly measurable, almost everywhere defined map. 
One defines the sesquilinear mapping
\begin{equation}\label{bou}
\phi^\pi\equiv\phi:\H\times\H\rightarrow \mathscr M(\Si),\ \quad \phi_{u,v}(s):=\<\pi(s)u,v\>.
\end{equation}
It extends concepts such as {\it representation coefficients}, {\it wavelet transform}, {\it Short Time Fourier Transform}.

\begin{notation}
\label{cal}
\normalfont
We will use the notation 
\begin{equation}\label{gnu}
\Phi^\pi\equiv\Phi:\H\widehat\otimes\,\overline{\H}\rightarrow\L^2(\Si;\mu)\equiv \L^2(\Si).
\end{equation}
if the mapping $\phi^\pi$ admits such an {\bfi isometric} extension.
\end{notation}

\begin{remark}\label{magar}
\normalfont
The map $\Phi^\pi$ from Notation~\ref{cal} exists if and only if 
\begin{equation}\label{SQ_def}
(\forall\,u_1,u_2,v_1,v_2\in\Hc)\quad \int_\Sigma\la\pi(s)u_1,v_1\ra \la v_2,\pi(s)u_2\ra d\mu(s)=\la u_1,u_2\ra\la v_2,v_1\ra. 
\end{equation}
To achieve this equality, a renormalization of the measure $\mu$ may be used if necessary. 
Also note that it is enough to check that the equations \eqref{SQ_def} are satisfied  
for the vectors $u_1,u_2,v_1,v_2$ merely in some dense subset of~$\Hc$. 
A simple polarization argument also shows that it suffices to verify \eqref{SQ_def} for $u_1=u_2$ and $v_1=v_2$, 
that is 
\begin{equation}\label{porc}
(\forall u,v\in\Hc)\quad \int_\Si\!|\<\pi(s)u,v\>|^2d\mu(s)=\,\,\parallel\!u\!\parallel^2\,\parallel\!v\!\parallel^2.
\end{equation}
\end{remark} 

It will be convenient to make the following definition. 

\begin{definition}\label{SQ}
\normalfont
For any complex Hilbert space $\Hc$ and any measure space $(\Sigma,\Mc,\mu)$ we define $\SQ(\BB(\Hc),\mu)$ as the set of all weakly measurable, almost everywhere defined maps 
$\pi\colon\Sigma\to\BB(\Hc)$  satisfying the square-integrability condition~\eqref{SQ_def}.
\end{definition}

Before continuing, let us mention that Definition~\ref{SQ} was motivated by 
several important examples 
of operator-valued maps that satisfy the above square-integrability condition: 
\begin{itemize}
\item unitary irreducible representations of compact groups (see Corollary~\ref{irred2}); 
\item unitary irreducible representations of connected, simply connected, nilpotent Lie groups 
(see Corollary~\ref{irred3} and Proposition~\ref{WP}); 
\item the magnetic Weyl systems on $\RR^{2n}$ (see Subsection~\ref{avat}); 
\item operator calculi on locally compact abelian groups (see Proposition~\ref{meta}); 
\item localized Weyl calculus for some unitary representations of infinite-dim\-ensional Lie groups (see Subsection~\ref{lostrita}).
\end{itemize}
  
We will now obtain some simple results  which point out that the square-integra\-bil\-ity property of operator-valued maps 
should be viewed as a kind of irreducibility in the sense of representation theory,  that is, it implies the absence of nontrivial invariant subspaces. 
It is well known  that the assertions of the following proposition are  equivalent ways to describe the irreducibility property  
if the operator set $\pi(\Sigma)$ would be assumed to be closed under operator adjoints. 
As we do not assume that self-adjointness hypothesis, we will prove these assertions separately.  

\begin{proposition}\label{irred1}
Let $\pi\in\SQ(\mathbb B(\Hc),\mu)$.  
Then the following assertions hold: 
\begin{enumerate}
\item If a closed linear subspace $\Hc_0\subseteq\Hc$ 
has the property $\pi(\Sigma)\Hc_0\subseteq\Hc_0$, then either $\Hc_0=\{0\}$ or $\Hc_0=\Hc$. 
\item If the operator $T\in\mathbb B(\Hc)$ has the property $T\pi(s)=\pi(s)T$ for a.e. $s\in\Sigma$, 
then $T= z 1_{\Hc}$ for some $z\in\CC$. 
\end{enumerate}
\end{proposition}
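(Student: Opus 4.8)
The plan is to prove the two assertions separately, relying on the full polarized orthogonality relation \eqref{SQ_def} rather than on Schur's lemma. The reason for this detour is exactly the point flagged in the text: since $\pi(\Sigma)$ is not assumed to be closed under operator adjoints, there is no ambient $^*$-algebra to which Schur's lemma applies, so the scalar-ness of a commutant must be squeezed directly out of the square-integrability identity.

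For assertion (1) the argument is short. Assume $\Hc_0\neq\{0\}$ and fix $u\in\Hc_0$ with $u\neq 0$. The invariance $\pi(\Sigma)\Hc_0\subseteq\Hc_0$ gives $\pi(s)u\in\Hc_0$ for every $s$, hence $\la\pi(s)u,v\ra=0$ for all $s\in\Sigma$ whenever $v\in\Hc_0^\perp$. Feeding this into \eqref{porc} yields
\[
\|u\|^2\,\|v\|^2=\int_\Sigma|\la\pi(s)u,v\ra|^2\,d\mu(s)=0,
\]
so $v=0$. Thus $\Hc_0^\perp=\{0\}$, and since $\Hc_0$ is closed we conclude $\Hc_0=\Hc$.

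For assertion (2) I would use the bilinear identity \eqref{SQ_def}, which for arbitrary $a,b,c,d\in\Hc$ reads
\[
\int_\Sigma\la\pi(s)a,b\ra\,\la c,\pi(s)d\ra\,d\mu(s)=\la a,d\ra\,\la c,b\ra .
\]
Assuming $\Hc\neq\{0\}$ (the case $\Hc=\{0\}$ being trivial), I compute the left-hand side with $a$ replaced by $Ta$ in two ways. On one hand, direct substitution gives $\la Ta,d\ra\,\la c,b\ra$. On the other hand, the hypothesis $T\pi(s)=\pi(s)T$ for a.e.\ $s$ gives $\la\pi(s)Ta,b\ra=\la T\pi(s)a,b\ra=\la\pi(s)a,T^*b\ra$ off a $\mu$-null set, so applying \eqref{SQ_def} with $b$ replaced by $T^*b$ yields $\la a,d\ra\,\la c,T^*b\ra=\la a,d\ra\,\la Tc,b\ra$. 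Equating the two expressions gives, for all $a,b,c,d\in\Hc$,
\[
\la Ta,d\ra\,\la c,b\ra=\la a,d\ra\,\la Tc,b\ra .
\]
Fixing $b,c$ with $\la c,b\ra\neq 0$ and setting $z:=\la Tc,b\ra/\la c,b\ra\in\CC$, this reduces to $\la Ta,d\ra=z\la a,d\ra$ for all $a,d$, that is $T=z\,1_{\Hc}$.

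The main obstacle, as anticipated, is purely conceptual rather than computational: without self-adjointness of $\pi(\Sigma)$ one cannot invoke Schur, so the operator $T$ must be transported through \emph{both} slots of \eqref{SQ_def} — through the first via the commutation relation and through the second via the substitution $b\mapsto T^*b$ — so that the two resulting evaluations of the same integral can be compared. The only point of care is the a.e.\ qualifier, which is harmless because it only affects a $\mu$-null set and therefore leaves all the integral identities intact.
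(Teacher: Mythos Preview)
Your proof is correct and follows essentially the same route as the paper. For assertion~(1) both arguments are identical up to taking the contrapositive; for assertion~(2) both derive the key identity $\la Ta,d\ra\,\la c,b\ra=\la a,d\ra\,\la Tc,b\ra$ by moving $T$ across via $T^*$ in \eqref{SQ_def}, the only cosmetic difference being that the paper then specializes to $a=d$, $b=c$ to see the numerical range is a point and polarizes, whereas you divide by a nonzero $\la c,b\ra$ directly, which is marginally cleaner.
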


\begin{proof}
For the first assertion, assume $\Hc_0\subsetneqq\Hc$.  Then there exists some nonzero vector $v\in\H$ with $v\perp\Hc_0$. 
Hence, by using the hypothesis $\pi(\Sigma)\Hc_0\subseteq\Hc_0$, we obtain  $v\perp\pi(s)u$ for all $s\in\Sigma$ and $u\in\Hc_0$. 
Setting in \eqref{SQ_def} $u_1=u_2=u$ and $v_1=v_2=v$, it follows that for all $u\in\Hc_0$ we have $\Vert u\Vert^2\Vert v\Vert^2=0$, 
hence necessarily $u=0$.  Consequently $\Hc_0=0$, and this concludes the proof. 

Now we prove the second assertion. First note that, for every operator $T\in\mathbb B(\Hc)$ satisfying the condition $T\pi(s)=\pi(s)T$ for a.e. $s\in\Sigma$ 
we have $\la\pi(\cdot)Tu_1,v_1\ra=\la\pi(\cdot)u_1,T^*v_1\ra$, hence by~\eqref{SQ_def}
$$(\forall u_1,u_2,v_1,v_2\in\Hc)\quad 
\la Tu_1,u_2\ra\la v_2,v_1\ra=\la u_1,u_2\ra\la v_2,T^*v_1\ra. $$
Now for  $u_1=u_2$ and $v_1=v_2$ we obtain 
$$(\forall u,v\in\Hc\setminus\{0\})\quad\frac{\la Tu,u\ra}{\Vert u\Vert^2}=\frac{\la Tv,v\ra}{\Vert v\Vert^2}=:z\in\CC.$$
Thus the numerical range of the operator $T$ consists of a single point, and $T$ is then a scalar multiple of the identity operator. 
Specifically, the above equalities imply $\la (T-z 1_{\Hc})u,u\ra=0$ for all $u\in\Hc$. 
Then, by polarization, $\la (T-z 1_{\Hc})u,v\ra=0$ for all $u,v\in\Hc$, hence eventually $T= z 1_{\Hc}$, which completes the proof. 
\end{proof}

The above proposition, 
directly implies that direct sums of square-integrable maps may not be square integrable. 
That is, if $\pi_j(\cdot)\in\SQ(\BB(\Hc_j),\mu_j)$ for $j=1,2$, then the map 
$\begin{pmatrix} \pi_1(\cdot) & 0 \\ 
0 & \pi_2(\cdot)\end{pmatrix}$ 
does not belong to $\SQ(\BB(\Hc_1\oplus\Hc_2),\mu_1\otimes\mu_2)$ unless we have either $\Hc_1=\{0\}$ or $\Hc_2=\{0\}$ 
(see however Proposition~\ref{sums} below). 

We now derive other consequences of Proposition~\ref{irred1}.    

\begin{corollary}\label{irred2}
If $\Sigma$ is a compact group with the probability Haar measure~$\mu$ 
and $\pi\colon\Sigma\to{\mathbb B}(\Hc)$ is a unitary representation, 
then $\pi\in\SQ(\mathbb B(\Hc),\mu)$ 
if and only if $\pi$ is an irreducible representation. 
\end{corollary}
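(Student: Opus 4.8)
The plan is to treat the two implications separately, using that the Standing Hypothesis~\ref{cal} is equivalent to the membership $\pi\in\SQ(\BB(\Hc),\mu)$, i.e. to condition~\eqref{SQ_def}.

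For the implication ``Standing Hypothesis $\Rightarrow$ irreducible'' I would invoke Proposition~\ref{irred1}(1) directly. Since $\pi$ is a unitary representation, a closed subspace $\Hc_0\subseteq\Hc$ is invariant in the representation-theoretic sense precisely when $\pi(\Sigma)\Hc_0\subseteq\Hc_0$. Proposition~\ref{irred1}(1) then forces $\Hc_0\in\{\{0\},\Hc\}$, which is exactly the statement that $\pi$ has no nontrivial closed invariant subspace, that is, that it is irreducible. This direction requires no computation and no finite-dimensionality.

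For the converse I would appeal to the classical Schur orthogonality relations. First recall that an irreducible unitary representation of a compact group is finite-dimensional, say $d:=\dim\Hc<\infty$. Given $u_1,u_2\in\Hc$, I would form the averaging operator
\[
A:=\int_\Sigma \pi(s)\lambda_{u_1,u_2}\pi(s)^*\,\de\mu(s)
=\int_\Sigma \lambda_{\pi(s)u_1,\pi(s)u_2}\,\de\mu(s),
\]
the second equality coming from~\eqref{catar}. Left-invariance of the Haar measure gives $\pi(h)A=A\pi(h)$ for every $h\in\Sigma$, so by Schur's lemma $A=c\,1_{\Hc}$ for some scalar $c$; taking traces and using that each $\pi(s)$ is unitary together with $\mu(\Sigma)=1$ and $\Tr(\lambda_{u_1,u_2})=\la u_1,u_2\ra$ yields $c=\tfrac1d\la u_1,u_2\ra$. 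Pairing $A$ against $v_1,v_2$ and expanding $\lambda_{\pi(s)u_1,\pi(s)u_2}$ then produces
\[
\int_\Sigma \la\pi(s)u_1,v_1\ra\,\la v_2,\pi(s)u_2\ra\,\de\mu(s)=\tfrac1d\,\la u_1,u_2\ra\,\la v_2,v_1\ra,
\]
which is precisely~\eqref{SQ_def} up to the factor $1/d$.

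The only subtlety, and the point where I expect to have to be careful, is this dimension factor $1/d$: with the probability Haar measure the relation~\eqref{SQ_def} holds with the extra constant $1/d$ rather than with constant $1$. This is harmless, because Remark~\ref{magar} explicitly permits a renormalization of $\mu$; replacing $\mu$ by $d\,\mu$ (equivalently, absorbing $\sqrt d$ into $\Phi$) makes $\Phi$ an isometry and thereby verifies the Standing Hypothesis. Thus the substantive content is the orthogonality computation, whose only genuine inputs are Schur's lemma and the finite-dimensionality of irreducible representations of compact groups, while the forward implication is an immediate consequence of the already-established Proposition~\ref{irred1}.
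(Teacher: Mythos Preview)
Your proof is correct and follows the same approach as the paper: the paper's own argument simply cites the well-known square-integrability of irreducible representations of compact groups for one direction and invokes Proposition~\ref{irred1} for the other, whereas you supply the standard Schur-orthogonality computation explicitly. Your observation about the dimension factor $1/d$ and the need to invoke the renormalization allowed by Remark~\ref{magar} is entirely apt---the paper glosses over this point.
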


\begin{proof}
It is well known that the irreducible representations of compact groups are square integrable with respect to the Haar measure, 
and the converse implication follows by Proposition~\ref{irred1}. 
\end{proof}

We also state the following corollary here for the sake of completeness of the information available so far 
on this circle of ideas, 
although its nontrivial implication depends on some aspects of representation theory of nilpotent Lie groups 
to be discussed in Subsection~\ref{Ped}.

\begin{corollary}\label{irred3}
Let $\Sigma$ be any connected, simply connected, nilpotent Lie group 
and $\pi\colon\Sigma\to{\mathbb B}(\Hc)$ be any unitary representation.  
Then $\pi$ is an irreducible representation if and only if there exists a Borel measure $\mu$ on $\Sigma$ 
for which $\pi\in\SQ(\mathbb B(\Hc),\mu)$. 
\end{corollary}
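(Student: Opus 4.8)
The reverse implication is immediate from the results already proved. If some Borel measure $\mu$ makes Standing Hypothesis~\ref{cal} hold, then $\pi\in\SQ(\BB(\Hc),\mu)$, and part~(1) of Proposition~\ref{irred1} shows that the only closed subspaces $\Hc_0$ with $\pi(\Sigma)\Hc_0\subseteq\Hc_0$ are $\{0\}$ and $\Hc$. For a unitary representation these are exactly the closed invariant subspaces, so $\pi$ is irreducible. The whole content of the corollary is therefore the direct implication, which I would obtain from the Kirillov correspondence together with the orthogonality relations for nilpotent Lie groups cited in the introduction.

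Let $\gg$ be the Lie algebra of $\Sigma$. By Kirillov's theorem the irreducible $\pi$ is equivalent to the representation $\pi_\ell$ attached to a coadjoint orbit $\mathcal{O}_\ell=\Ad^*(\Sigma)\ell$, $\ell\in\gg^*$, realized on $\Hc\cong L^2(\RR^n)$ with $2n=\dim\mathcal{O}_\ell$. Write $B_\ell(X,Y)=\ell([X,Y])$ and let $\gg(\ell)=\{X\in\gg : B_\ell(X,\gg)=0\}$ be its radical, a subalgebra of codimension $2n$ containing the center; set $G_\ell=\exp\gg(\ell)$. I would stress that, in contrast with the square-integrable-modulo-center situation, the modulus $|\phi_{u,v}|$ need \emph{not} be invariant under $G_\ell$; nevertheless the orthogonality relations of Moore and Wolf for nilpotent groups \cite{HM79,Pe,Wo13} assert that, once a coexponential basis for $\gg(\ell)$ in $\gg$ is fixed, the resulting transversal $S\subseteq\Sigma$ to $G_\ell$ --- parametrized by $\RR^{2n}$ through exponential coordinates --- carries a Lebesgue measure $ds$ for which
\begin{equation*}
\int_S \phi_{u_1,v_1}(s)\,\overline{\phi_{u_2,v_2}(s)}\,ds = c\,\langle u_1,u_2\rangle\,\overline{\langle v_1,v_2\rangle}\qquad(u_i,v_i\in\Hc)
\end{equation*}
holds with a strictly positive formal degree $c=c(\ell)$.

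To produce the measure demanded on $\Sigma$ itself I would simply transport $ds$. Since $\Sigma$ is connected, simply connected and nilpotent, the section $S$ is a closed Borel submanifold, and pushing $c^{-1}\,ds$ forward through the inclusion $S\hookrightarrow\Sigma$ and extending by zero defines a Borel measure $\mu$ on $\Sigma$ concentrated on $S$. For this $\mu$ the displayed relation becomes exactly \eqref{SQ_def}, after identifying $\langle v_2,v_1\rangle=\overline{\langle v_1,v_2\rangle}$; and by the reduction recorded in Remark~\ref{magar} it is in fact enough to have checked the diagonal case $u_1=u_2$, $v_1=v_2$, namely $\int_\Sigma|\phi_{u,v}|^2\,d\mu=\Vert u\Vert^2\Vert v\Vert^2$, which is~\eqref{porc}. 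Hence Standing Hypothesis~\ref{cal} holds for $\mu$ and $\pi\in\SQ(\BB(\Hc),\mu)$.

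The crux of the argument --- and the only place where nilpotency is genuinely used --- is the orthogonality relation of the second paragraph. The delicate point is that, because $|\phi_{u,v}|^2$ does not descend to the coset space $\Sigma/G_\ell$, one cannot integrate naively over a quotient group; instead one integrates over a transversal and must show that the result is independent of its choice and equals $c\,\Vert u\Vert^2\Vert v\Vert^2$ with $c>0$. This is precisely the content of the Moore--Wolf theory of square-integrability modulo the stabilizer, and invoking it in the normalized form above is the main task; the subsequent transport of the Lebesgue measure to $\Sigma$ is then a routine matter of Borel cross-sections for homogeneous spaces of nilpotent Lie groups.
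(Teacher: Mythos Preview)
Your proof is correct and follows essentially the same strategy as the paper: the converse uses Proposition~\ref{irred1}, and the forward direction invokes the orthogonality relations from \cite{Pe} to produce a Lebesgue measure on a transversal to the stabilizer, then pushes it forward to~$\Sigma$. The paper's own proof is terser---it simply cites \cite[Th.~2.2.6--7]{Pe} and points to Proposition~\ref{WP}, where the transversal is constructed specifically via the \emph{jump indices} of a Jordan--H\"older basis (the predual $\gg_e$ of the orbit), rather than via an arbitrary coexponential basis for $\gg(\ell)$; your attribution of the key orthogonality relation to ``Moore--Wolf theory of square-integrability modulo the stabilizer'' is slightly loose, since the result you are actually using is Pedersen's extension beyond the square-integrable-mod-center case, but you do cite \cite{Pe} and the mathematical content is the same.
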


\begin{proof}
If $\pi\in\SQ(\mathbb B(\Hc),\mu)$ for some measure $\mu$, then  the representation $\pi$ is irreducible by Proposition~\ref{irred1}. 
The converse implication, including details on the construction of the measure $\mu$ in terms of the representation~$\pi$, 
is the object of Proposition~\ref{WP} below.
\end{proof}

The above Proposition~\ref{irred1} 
suggests the problem of determining the topological groups for which every unitary irreducible representation admits a measure on the group for which the representation is square integrable. 
As Corollaries \ref{irred2} and \ref{irred3} show, 
that property is shared by both the compact topological groups and the connected, simply connected nilpotent Lie groups, 
which looks somehow surprising, since these two types of groups have rather few common features. 
It would be interesting to find other examples of topological groups whose unitary irreducible representations are square-integrable with respect to suitable measures.

Despite the above irreducibility properties of square-integrable maps, 
let us note that the direct sums of such maps do have a weaker property, 
as recorded in the following observation, which is needed in the proof of Theorem~\ref{inf}.  

\begin{proposition}\label{sums}
Let $(\Sigma,\Mc,\mu)$ be any measure space. 
Let $J$ be any countable index set and for every $j\in J$ let $\Hc_j$ be any complex Hilbert space 
and $\pi_j\in\SQ(\BB(\Hc_j),\mu)$. 
Assume that for almost every $s\in\Sigma$ we have $\sup\limits_{j\in J}\Vert\pi_j(s)\Vert<\infty$. 
If we set $\Hc:=\bigoplus\limits_{j\in J}\Hc_j$, 
then the almost everywhere defined map 
$\pi:=\bigoplus\limits_{j\in J}\pi_j(\cdot)\colon\Sigma\to\BB(\Hc)$ 
is weakly measurable and has the property 
\begin{equation}\label{sums_eq1}
\int_\Sigma\vert\la\pi(s)u_1,v_1\ra \la v_2,\pi(s)u_2\ra\vert d\mu(s)
\le \Vert u_1\Vert \Vert v_1\Vert \Vert v_2\Vert \Vert u_2\Vert   
\end{equation}
for all $u_1,u_2,v_1,v_2\in\Hc$. 
\end{proposition}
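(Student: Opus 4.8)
The plan is to treat the two assertions separately, reducing everything to the coordinate maps $\pi_j$ and invoking the square-integrability identity \eqref{porc} for each of them. Throughout write $u=(u_j)_{j\in J}$, $v=(v_j)_{j\in J}$ for vectors of $\Hc=\bigoplus_{j\in J}\Hc_j$, so that $\sum_{j\in J}\Vert u_j\Vert^2<\infty$. For the weak measurability, fix the full-measure set on which $M(s):=\sup_{j\in J}\Vert\pi_j(s)\Vert<\infty$. On this set, Cauchy--Schwarz at the level of the index set gives $\sum_{j\in J}\vert\la\pi_j(s)u_j,v_j\ra\vert\le M(s)\sum_{j\in J}\Vert u_j\Vert\,\Vert v_j\Vert\le M(s)\Vert u\Vert\,\Vert v\Vert<\infty$, so the series $\la\pi(s)u,v\ra=\sum_{j\in J}\la\pi_j(s)u_j,v_j\ra$ converges absolutely and $\pi(s)$ is a well-defined bounded operator on $\Hc$. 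Each partial sum over a finite subset of $J$ is a measurable function of $s$ because every $\pi_j$ is weakly measurable, and an almost everywhere convergent series of measurable functions is measurable; hence $s\mapsto\la\pi(s)u,v\ra$ is measurable. This is the only place the uniform boundedness hypothesis enters, and it is also the only genuinely delicate point.

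For the inequality I would first bound the two factors of the integrand pointwise by the triangle inequality applied to the defining series:
\[
\vert\la\pi(s)u_1,v_1\ra\vert\le\sum_{j\in J}\vert\la\pi_j(s)u_{1,j},v_{1,j}\ra\vert,\qquad
\vert\la v_2,\pi(s)u_2\ra\vert\le\sum_{k\in J}\vert\la\pi_k(s)u_{2,k},v_{2,k}\ra\vert.
\]
Since all the resulting terms are nonnegative and measurable, Tonelli's theorem lets me expand the product and integrate term by term, so the left-hand side of \eqref{sums_eq1} is dominated by $\sum_{j,k\in J}\int_\Si\vert\la\pi_j(s)u_{1,j},v_{1,j}\ra\vert\,\vert\la\pi_k(s)u_{2,k},v_{2,k}\ra\vert\,d\mu(s)$.

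Finally I would apply Cauchy--Schwarz twice. Applying it in $\L^2(\Si;\mu)$ to each integral and then using \eqref{porc} for $\pi_j$ and for $\pi_k$ replaces each summand by $\Vert u_{1,j}\Vert\,\Vert v_{1,j}\Vert\,\Vert u_{2,k}\Vert\,\Vert v_{2,k}\Vert$, so the double sum factors as $\bigl(\sum_{j\in J}\Vert u_{1,j}\Vert\,\Vert v_{1,j}\Vert\bigr)\bigl(\sum_{k\in J}\Vert u_{2,k}\Vert\,\Vert v_{2,k}\Vert\bigr)$. A second application of Cauchy--Schwarz over the index set $J$ bounds the first factor by $\Vert u_1\Vert\,\Vert v_1\Vert$ and the second by $\Vert u_2\Vert\,\Vert v_2\Vert$, giving exactly \eqref{sums_eq1}. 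I would close by noting that every cross term is kept with a positive sign and no cancellation is exploited; this is precisely why only an inequality is obtained rather than the exact identity \eqref{SQ_def}, since there is no orthogonality relation linking distinct $\pi_j$ and $\pi_k$.
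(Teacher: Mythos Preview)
Your argument is correct and follows essentially the same route as the paper: expand $\la\pi(s)u,v\ra$ as a series over $J$, bound by the double sum, apply Cauchy--Schwarz in $\L^2(\Sigma;\mu)$ together with \eqref{porc}, factor, and finish with Cauchy--Schwarz over $J$. You give a more explicit treatment of weak measurability (the paper simply asserts it follows from countability of $J$), and your closing remark on why only an inequality results is a nice addition.
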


\begin{proof}
Since the index set $J$ is countable and the map $\pi_j$ is weakly measurable for every $j\in J$, 
it easily follows that the map $\pi$ is in turn weakly measurable. 
For arbitrary  $j\in J$ and $u_1,u_2,v_1,v_2\in\Hc$ and we will denote their projections on $\Hc_j$ 
by $u_{1j},u_{2j},v_{1j},v_{2j}\in\Hc_j$, respectively. 
Then we have 
\allowdisplaybreaks
\begin{align}
 \int_\Sigma & \vert\la\pi(s)u_1,v_1\ra \la v_2,\pi(s)u_2\ra\vert d\mu(s)
\le 
 \sum_{j,k\in J}\int_\Sigma\vert\la\pi_j(s)u_{1j},v_{1j}\ra \la v_{2k},\pi_k(s)u_{2k}\ra\vert d\mu(s) \nonumber\\
\le 
& \sum_{j,k\in J}\Bigl(\int_\Sigma\vert\la\pi_j(s)u_{1j},v_{1j}\ra\vert^2 d\mu(s)\Bigr)^{1/2}  
\Bigl(\int_\Sigma\vert\la v_{2k},\pi_k(s)u_{2k}\ra\vert^2 d\mu(s)\Bigr)^{1/2} \nonumber\\
= 
& \sum_{j,k\in J}\Vert u_{1j}\Vert \Vert v_{1j}\Vert \Vert v_{2k}\Vert u_{2k}\Vert \nonumber\\
= 
&\Bigl(\sum_{j\in J}\Vert u_{1j}\Vert \Vert v_{1j}\Vert \Bigr)
 \Bigl(\sum_{k\in J}\Vert \Vert v_{2k}\Vert u_{2k}\Vert\Bigr) \nonumber\\ 
\le & 
\Bigl(\sum_{j\in J}\Vert u_{1j}\Vert^2\Bigr)^{1/2} 
\Bigl(\sum_{j\in J}\Vert v_{1j}\Vert^2\Bigr)^{1/2} 
\Bigl(\sum_{k\in J}\Vert v_{2k}\Vert^2\Bigr)^{1/2} 
\Bigl(\sum_{k\in J}\Vert u_{2k}\Vert^2\Bigr)^{1/2} \nonumber\\ 
=& 
\Vert u_1\Vert \Vert v_1\Vert \Vert v_2\Vert \Vert u_2\Vert \nonumber
\end{align}
and this concludes the proof. 
\end{proof}

We now draw a consequence that will be needed in the proof of Theorem~\ref{inf} below. 

\begin{corollary}\label{mult}
Let $(\Sigma,\Mc,\mu)$ be any measure space and $\Kc$ be any separable complex Hilbert space. 
If $\pi_0\in\SQ(\BB(\Hc_0),\mu)$, 
then $\pi(\cdot):=\pi_0(\cdot)\otimes\id_{\Kc}\colon\Sigma\to\BB(\Hc_0\widehat{\otimes}\Kc)$ 
is a weakly measurable map that satisfies \eqref{sums_eq1} 
for all $u_1,u_2,v_1,v_2\in\Hc_0\widehat{\otimes}\Kc$.
\end{corollary}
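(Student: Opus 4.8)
The plan is to deduce this statement directly from Proposition~\ref{sums} by realizing the operator $\pi_0(\cdot)\otimes\id_{\Kc}$ as a countable direct sum of copies of $\pi_0(\cdot)$. Since $\Kc$ is separable, I would first fix a countable orthonormal basis $\{e_j\}_{j\in J}$ of $\Kc$, with $J$ countable. The assignment $w\mapsto(w\otimes e_j)_{j\in J}$ then furnishes a unitary identification of $\Hc_0\widehat{\otimes}\Kc$ with the Hilbert space direct sum $\bigoplus_{j\in J}\Hc_0$, under which an element $\sum_{j\in J}w_j\otimes e_j$ (with $\sum_{j\in J}\Vert w_j\Vert^2<\infty$) corresponds to the family $(w_j)_{j\in J}$.

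Under this identification, I claim that $\pi_0(s)\otimes\id_{\Kc}$ is carried to the direct sum $\bigoplus_{j\in J}\pi_0(s)$, that is, to the family given by $\pi_j:=\pi_0$ for every $j\in J$. Indeed, $(\pi_0(s)\otimes\id_{\Kc})(\sum_j w_j\otimes e_j)=\sum_j(\pi_0(s)w_j)\otimes e_j$, which is precisely the componentwise action of $\bigoplus_{j\in J}\pi_0(s)$. It then remains only to check that the hypotheses of Proposition~\ref{sums} are met. The index set $J$ is countable by separability of $\Kc$; each $\pi_j=\pi_0$ belongs to $\SQ(\BB(\Hc_0),\mu)$ by assumption; and the uniform boundedness requirement holds trivially, since $\sup_{j\in J}\Vert\pi_j(s)\Vert=\Vert\pi_0(s)\Vert<\infty$ for every $s\in\Sigma$, the supremum being taken over identical operators.

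With these verifications in place, Proposition~\ref{sums} immediately yields that $\bigoplus_{j\in J}\pi_0(\cdot)$---hence $\pi_0(\cdot)\otimes\id_{\Kc}$---is weakly measurable and satisfies the inequality \eqref{sums_eq1} for all vectors in $\Hc_0\widehat{\otimes}\Kc$. The argument is essentially a bookkeeping exercise; the only point requiring care is the unitary identification of $\Hc_0\widehat{\otimes}\Kc$ with $\bigoplus_{j\in J}\Hc_0$ and the verification that it intertwines $\pi_0(\cdot)\otimes\id_{\Kc}$ with the direct sum, so that weak measurability and the estimate \eqref{sums_eq1} transport faithfully across the identification. I expect no genuine obstacle beyond making this identification precise.
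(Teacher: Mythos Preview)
Your proposal is correct and follows essentially the same approach as the paper: both fix a countable orthonormal basis of $\Kc$, identify $\Hc_0\widehat{\otimes}\Kc$ with $\bigoplus_{j\in J}\Hc_0$ so that $\pi_0(\cdot)\otimes\id_{\Kc}$ becomes $\bigoplus_{j\in J}\pi_0(\cdot)$, and then invoke Proposition~\ref{sums}. Your write-up is simply more explicit about the unitary identification and the verification of hypotheses than the paper's terse version.
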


\begin{proof}
Let $J$ be the index set of any orthonormal basis of~$\Kc$, 
so that we may assume $\Kc=\ell^2(J)$. 
If we set $\Hc_j:=\Hc_0$ and $\pi_j:=\pi_0$ for all $j\in J$, 
then we may apply Proposition~\ref{sums}, and the conclusion follows. 
\end{proof}

We discuss below a few operations on operator maps satisfying our square-integrability condition required in 
Definition~\ref{SQ}. 
In particular, these operations will provide as many methods to construct new square-integrable maps out of other maps satisfying the same hypothesis. 

\subsection{Tensor products of square-integrable maps} 

\begin{proposition}\label{SQ_prod}
For $j=1,2$,  let $\Hc_j$ be any complex Hilbert space, $(\Sigma_j,\Mc_j,\mu_j)$ be 
any $\sigma$-finite measure space, and let $\pi_j\in\SQ(\BB(\Hc_j),\mu_j)$. 
If we define 
$$\pi_1\otimes\pi_2\colon\Sigma_1\times\Sigma_2\to\BB(\Hc_1\widehat\otimes\Hc_2),\quad 
(\pi_1\otimes\pi_2)(s_1,s_2):=\pi_1(s_1)\otimes\pi_2(s_2),$$
then $\pi_1\otimes\pi_2\in\SQ(\BB(\Hc_1\widehat\otimes\Hc_2),\mu_1\otimes\mu_2)$. 
\end{proposition}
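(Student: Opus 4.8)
The plan is to verify the defining identity \eqref{SQ_def} for $\pi_1\otimes\pi_2$ directly, reducing everything to the corresponding identities for $\pi_1$ and $\pi_2$ by means of Fubini's theorem. First I would dispose of the weak measurability. For simple tensors $\xi=a\otimes b$ and $\eta=c\otimes d$ the matrix coefficient
\[
(s_1,s_2)\mapsto\la(\pi_1(s_1)\otimes\pi_2(s_2))\xi,\eta\ra=\la\pi_1(s_1)a,c\ra\,\la\pi_2(s_2)b,d\ra
\]
is a product of two measurable functions on the respective factors, hence measurable on $\Sigma_1\times\Sigma_2$; for arbitrary $\xi,\eta\in\Hc_1\widehat\otimes\Hc_2$ one then gets measurability by approximating $\xi,\eta$ in norm by finite sums of simple tensors and passing, for each fixed $(s_1,s_2)$, to the pointwise limit of the corresponding coefficients, which is legitimate because every $\pi_1(s_1)\otimes\pi_2(s_2)$ is a bounded operator.

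For the square-integrability identity itself I would invoke Remark~\ref{magar}: it suffices to check \eqref{SQ_def} on a dense subset of $\Hc_1\widehat\otimes\Hc_2$, and since both sides are separately (anti)linear in the four vector arguments, it is enough to take $u_i=a_i\otimes b_i$ and $v_i=c_i\otimes d_i$ with $a_i,c_i\in\Hc_1$, $b_i,d_i\in\Hc_2$, the case of simple tensors extending by multilinearity to the dense algebraic tensor product. With this choice the integrand factors as
\[
\la\pi(s)u_1,v_1\ra\la v_2,\pi(s)u_2\ra
=\bigl(\la\pi_1(s_1)a_1,c_1\ra\la c_2,\pi_1(s_1)a_2\ra\bigr)
 \bigl(\la\pi_2(s_2)b_1,d_1\ra\la d_2,\pi_2(s_2)b_2\ra\bigr),
\]
a product of a function of $s_1$ alone and a function of $s_2$ alone.

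The key step is then to justify that this product is integrable on $\Sigma_1\times\Sigma_2$, so that Fubini's theorem applies and the double integral factors. By Tonelli's theorem the integral of the absolute value equals the product of the two one-variable integrals of the absolute values, and each of these is finite by the Cauchy--Schwarz inequality in $\L^2(\Sigma_j)$ together with the scalar square-integrability \eqref{porc}; for instance $\int_{\Sigma_1}|\la\pi_1(s_1)a_1,c_1\ra\la c_2,\pi_1(s_1)a_2\ra|\,d\mu_1(s_1)\le\|a_1\|\,\|c_1\|\,\|a_2\|\,\|c_2\|<\infty$, and similarly over $\Sigma_2$. Once integrability is secured, Fubini lets me write the double integral as the product
\[
\Bigl(\int_{\Sigma_1}\la\pi_1(s_1)a_1,c_1\ra\la c_2,\pi_1(s_1)a_2\ra\,d\mu_1(s_1)\Bigr)
\Bigl(\int_{\Sigma_2}\la\pi_2(s_2)b_1,d_1\ra\la d_2,\pi_2(s_2)b_2\ra\,d\mu_2(s_2)\Bigr).
\]

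Applying \eqref{SQ_def} to $\pi_1$ and to $\pi_2$ turns the two factors into $\la a_1,a_2\ra\la c_2,c_1\ra$ and $\la b_1,b_2\ra\la d_2,d_1\ra$, whose product equals $\la a_1,a_2\ra\la b_1,b_2\ra\cdot\la c_2,c_1\ra\la d_2,d_1\ra=\la u_1,u_2\ra\la v_2,v_1\ra$, exactly the right-hand side of \eqref{SQ_def} for $\pi_1\otimes\pi_2$. I expect the main, and really the only delicate, obstacle to be the integrability hypothesis underpinning Fubini's theorem; once Tonelli together with \eqref{porc} secures it, the remaining manipulations are the routine factorizations of inner products on tensor products of Hilbert spaces.
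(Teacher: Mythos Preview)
Your proof is correct. The paper takes a different, more structural route: rather than verifying \eqref{SQ_def} directly via Fubini, it uses that $\pi_j\in\SQ(\BB(\Hc_j),\mu_j)$ is equivalent to $\Phi^{\pi_j}$ being an isometry, notes that the Hilbert tensor product of two isometries is again an isometry, and then identifies $\Phi^{\pi_1}\otimes\Phi^{\pi_2}$ (up to the flip unitary and the canonical unitary $\L^2(\Sigma_1)\widehat\otimes\L^2(\Sigma_2)\to\L^2(\Sigma_1\times\Sigma_2)$) with $\Phi^{\pi_1\otimes\pi_2}$. Your direct computation is more elementary and self-contained, and has the advantage of making the Tonelli/Fubini justification explicit; the paper's approach buys, as a byproduct, the structural identification $\Phi^{\pi_1\otimes\pi_2}=\Phi^{\pi_1}\otimes\Phi^{\pi_2}$, which is exploited later (Remark~\ref{graur}) to relate the $H^*$-algebras $\mathscr B_2(\Sigma_j)$ and the quantization maps $\Pi_j$ under tensor products.
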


\begin{proof}
For any $f_j\in L^2(\Sigma_j,\mu_j)$ with $j=1,2$, define 
$f_1\otimes f_2\in L^2(\Sigma_1\times\Sigma_2,\mu_1\otimes\mu_2)$ by 
$(f_1\otimes f_2)(x_1,x_2):=f_1(x_1)f_2(x_2)$ almost everywhere. 
Recall that the bilinear map 
$\L^2(\Sigma_1,\mu_1)\times L^2(\Sigma_2,\mu_2)\to L^2(\Sigma_1\times\Sigma_2,\mu_1\otimes\mu_2)$,
$ (f_1,f_2)\mapsto f_1\otimes f_2$, 
gives rise to a unitary operator 
$$V\colon \L^2(\Sigma_1,\mu_1)\widehat\otimes L^2(\Sigma_2,\mu_2)\to\L^2(\Sigma_1\times\Sigma_2,\mu_1\otimes\mu_2).$$ 
In fact, using the Fubini theorem, the operator $V$ is an isometry, and for proving that it is also surjective, 
we may assume $\mu_j(\Sigma_j)<\infty$ for $j=1,2$. 
Then it suffices to show that the set 
$${\mathcal Q}:=\{A\subseteq \Sigma_1\times\Sigma_2\mid \chi_A\in \Ran V\}$$ 
(where we have denoted by $\chi_A$ the characteristic function of the set $A$) 
contains the $\sigma$-ring ${\mathcal Q}_0$ of all the $\mu_1\otimes\mu_2$-measurable subsets of $\Sigma_1\times\Sigma_2$, 
since $\{\chi_A\mid A\in{\mathcal Q}_0\}$ spans a dense linear subspace of $L^2(\Sigma_1\times\Sigma_2,\mu_1\otimes\mu_2)$.  
Recall that ${\mathcal Q}_0$ is the $\sigma$-ring generated by the sets $A_1\times A_2$ for all measurable sets $A_j\subseteq \Sigma_j$ 
with  $j=1,2$. 
It suffices to note that ${\mathcal Q}$ is a ring  
(i.e., it is closed under finite unions and differences),  
${\mathcal Q}$ is closed under countable unions of increasing sequences by Lebesgue's dominated convergence theorem since we assumed the measures $\mu_1$ and $\mu_2$ to be finite, 
and moreover ${\mathcal Q}$ contains all the above mentioned sets $A_1\times A_2$, 
hence ${\mathcal Q}_0\subseteq{\mathcal Q}$ by the monotone class theorem 
\cite[Ch. I, \SS6, Th. B]{Ha50}.

Then for $j=1,2$, 
by using the hypothesis $\pi_j\in\SQ(\BB(\Hc_j),\mu_j)$ along with~\eqref{gnu}, 
we obtain the isometry 
$$\Phi^{\pi_j}\colon\Hc_j\widehat\otimes\overline{\Hc}_j\to\L^2(\Sigma_j,\mu_j), 
\quad  \Phi^{\pi_j}(u\otimes v)=\la\pi_j(\cdot)u,v\ra.$$
As the Hilbertian tensor product of two isometries is again an isometry, 
we obtain that the operator 
$$\Phi^{\pi_1}\otimes \Phi^{\pi_2}\colon
(\Hc_1\widehat\otimes\overline{\Hc}_1)\widehat\otimes (\Hc_2\widehat\otimes\overline{\Hc}_2)\to
\L^2(\Sigma_1,\mu_1)\widehat\otimes L^2(\Sigma_2,\mu_2)$$
is an isometry. 
On the other hand, for $j=1,2$, all $u_j,v_j\in\Hc_j$, and almost all $s_j\in\Sigma_j$ we have 
$$\begin{aligned}
((\Phi^{\pi_1}\otimes \Phi^{\pi_2})((u_1\otimes v_1)\otimes & (u_2\otimes v_2)))(s_1,s_2) \\
& =\la\pi(s_1)u_1,v_1\ra \la\pi(s_2)u_2,v_2\ra \\
&=\la((\pi_1\otimes\pi_2)(s_1,s_2))(u_1\otimes u_2),v_1\otimes v_2\ra \\
&=\phi^{\pi_1\otimes\pi_2}_{u_1\otimes u_2,v_1\otimes v_2}(s_1,s_2). 
\end{aligned}$$
Now, by composing the isometry $\Phi^{\pi_1}\otimes \Phi^{\pi_2}$ with the flip unitary operator 
$$(\Hc_1\widehat\otimes\overline{\Hc}_1)\widehat\otimes (\Hc_2\widehat\otimes\overline{\Hc}_2)\to
(\Hc_1\widehat\otimes\Hc_2)\widehat\otimes \overline{\Hc_1\widehat\otimes\Hc_2},\ 
u_1\otimes v_1\otimes u_2\otimes v_2\mapsto u_1\otimes u_2\otimes v_1\otimes v_2 $$
and with the above unitary operator $V$, 
it follows that the sesquilinear mapping 
$$\phi^{\pi_1\otimes\pi_2}\colon(\Hc_1\widehat\otimes\Hc_2)\times(\Hc_1\widehat\otimes\Hc_2)
\to\mathscr M(\Sigma_1\times\Sigma_2)$$ 
(see \eqref{bou}) 
gives rise to the isometry 
$\Phi^{\pi_1\otimes\pi_2}=\Phi^{\pi_1}\otimes \Phi^{\pi_2}$. 
This  
implies that $\pi_1\otimes\pi_2\in\SQ(\BB(\Hc_1\widehat\otimes\Hc_2),\mu_1\otimes\mu_2)$, 
and the proof is complete. 
\end{proof}

\subsection{Compressions of square-integrable maps} 
  
\begin{proposition}\label{SQ_push}
For $j=1,2$ let $\Hc_j$ be any complex Hilbert space, $(\Sigma_j,\Mc_j,\mu_j)$ be any measure space, and let $\pi_j\colon\Sigma_j\to \BB(\Hc_j)$. 
Assume that $p\colon\Sigma_2\to\Sigma_1$ is a measurable map satisfying the condition $p_*(\mu_2)=\mu_1$ 
and $\iota\colon\Hc_1\to\Hc_2$ is a linear isometry for which $\pi_1\circ p=\iota^*\pi_2(\cdot)\iota$ a.e.\ on~$\Sigma_2$. 

If $\pi_2\in\SQ(\BB(\Hc_2),\mu_2)$, then also $\pi_1\in\SQ(\BB(\Hc_1),\mu_1)$ and we have the commutative diagram 
$$\begin{CD} 
\L^2(\Sigma_1,\mu_1) @>{f\mapsto f\circ p}>> \L^2(\Sigma_2,\mu_2) \\
@A{\Phi^{\pi_1}}AA @AA{\Phi^{\pi_2}}A \\
\Hc_1\widehat\otimes\overline{\Hc}_1 @>{\iota\otimes\iota}>> \Hc_2\widehat\otimes\overline{\Hc}_2
\end{CD}$$
whose arrows are isometries.  
\end{proposition}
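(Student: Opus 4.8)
The plan is to reduce the whole statement to a single pointwise identity relating the coefficients of $\pi_1$ and $\pi_2$, and then to transport every $L^2$-computation from $\Sigma_1$ to $\Sigma_2$ using the push-forward relation $p_*(\mu_2)=\mu_1$. First I would record the key identity: since $\iota^*$ is the adjoint of the isometry $\iota$ and $\pi_1\circ p=\iota^*\pi_2(\cdot)\iota$ a.e.\ on $\Sigma_2$, for all $u,v\in\Hc_1$ and a.e.\ $s\in\Sigma_2$ one has
$$\la\pi_1(p(s))u,v\ra=\la\iota^*\pi_2(s)\iota u,v\ra=\la\pi_2(s)\iota u,\iota v\ra,$$
that is, $\phi^{\pi_1}_{u,v}\circ p=\phi^{\pi_2}_{\iota u,\iota v}$ in $\mathscr M(\Sigma_2)$. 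Everything else is deduced from this, together with the two isometry inputs (that $\iota$ is isometric and that $p$ preserves the measure).

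Next I would prove $\pi_1\in\SQ(\BB(\Hc_1),\mu_1)$ by checking~\eqref{porc}. Using $p_*(\mu_2)=\mu_1$ as a change of variables, then the key identity, and finally~\eqref{porc} for $\pi_2$, I obtain
$$\int_{\Sigma_1}|\la\pi_1(s)u,v\ra|^2\,d\mu_1(s)=\int_{\Sigma_2}|\la\pi_2(s)\iota u,\iota v\ra|^2\,d\mu_2(s)=\Vert\iota u\Vert^2\Vert\iota v\Vert^2=\Vert u\Vert^2\Vert v\Vert^2,$$
the last step because $\iota$ is an isometry. This is exactly~\eqref{porc} for $\pi_1$, so the Standing Hypothesis~\ref{cal} holds for $\pi_1$ and the isometry $\Phi^{\pi_1}$ of~\eqref{gnu} becomes available.

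It then remains to assemble the diagram. The top arrow $f\mapsto f\circ p$ is well defined on $L^2$-classes and isometric: if $N\subseteq\Sigma_1$ is $\mu_1$-null then $p^{-1}(N)$ is $\mu_2$-null, again by $p_*(\mu_2)=\mu_1$, while the same relation gives $\int_{\Sigma_2}|f\circ p|^2\,d\mu_2=\int_{\Sigma_1}|f|^2\,d\mu_1$. The bottom arrow is the Hilbertian tensor product of $\iota\colon\Hc_1\to\Hc_2$ with the isometry $\overline{\Hc}_1\to\overline{\Hc}_2$ induced by $\iota$, hence is itself an isometry. Commutativity is verified on the total set of simple tensors: for $u\otimes v\in\Hc_1\otimes\overline{\Hc}_1$,
$$\Phi^{\pi_2}\big((\iota\otimes\iota)(u\otimes v)\big)=\phi^{\pi_2}_{\iota u,\iota v}=\phi^{\pi_1}_{u,v}\circ p=\big(\Phi^{\pi_1}(u\otimes v)\big)\circ p,$$
the middle equality being the key identity, and the two continuous maps then agree everywhere by density.

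I do not expect a genuine obstacle; the content is essentially bookkeeping. The two points deserving a little care are the logical order---$\Phi^{\pi_1}$ may be invoked only after square-integrability of $\pi_1$ has been secured, so the intertwining must first be carried out at the level of the coefficients $\phi^{\pi_1}$---and the reading of the second factor of $\iota\otimes\iota$ as the isometry induced by $\iota$ on the conjugate spaces. Weak measurability of $\pi_1$, which is implicit in membership of $\SQ(\BB(\Hc_1),\mu_1)$, is taken as given, since the relation $\pi_1\circ p=\iota^*\pi_2(\cdot)\iota$ by itself only yields measurability of $\pi_1\circ p$ on $\Sigma_2$.
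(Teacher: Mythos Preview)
Your proof is correct and follows essentially the same approach as the paper: both reduce to the pointwise identity $\la\pi_1(p(s))u,v\ra=\la\pi_2(s)\iota u,\iota v\ra$, transport the $L^2$-integral via $p_*(\mu_2)=\mu_1$, and invoke \eqref{porc} for $\pi_2$ together with the isometry of $\iota$. Your treatment of the commutative diagram is more explicit than the paper's, which simply declares it ``clear'' after establishing $\pi_1\in\SQ(\BB(\Hc_1),\mu_1)$.
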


\begin{proof}
By using the hypothesis, for arbitrary $u_1,v_1\in\Hc_1$ we obtain  
\allowdisplaybreaks 
\begin{align}
\int_{\Sigma_1}\vert\la\pi_1(s_1)u_1,v_1\ra\vert^2 d\mu_1(s_1)
&= \int_{\Sigma_2}\vert\la\pi_1(p(s_2))u_1,v_1\ra\vert^2 d\mu_2(s_2) \nonumber\\
&= \int_{\Sigma_2}\vert\la\iota^*\pi_2(s_2)\iota (u_1),v_1\ra\vert^2 d\mu_2(s_2) \nonumber\\
&= \int_{\Sigma_2}\vert\la\pi_2(s_2)\iota(u_1),\iota(v_1)\ra\vert^2 d\mu_2(s_2) \nonumber\\
&=\Vert \iota(u_1)\Vert^2\Vert\iota(v_1)\Vert^2 =\Vert u_1\Vert^2\Vert v_1\Vert^2 \nonumber
\end{align}
 which shows that $\pi_1\in\SQ(\BB(\Hc_1),\mu_1)$. 
The assertion on the commutative diagram is then clear, and this concludes the proof. 
\end{proof}

\section{The $H^*$-algebra $\mathscr B_2(\Si)$ and larger symbol spaces}\label{vacuta}

We place ourselves in the setting of Section~\ref{vaca}; 
in particular we are given a map $\pi:\Si\to\mathbb B(\H)$ belonging to $SQ(\mathbb B(\H),\mu)$. 
We do not  assume the family $\{\phi_{u,v}\equiv\Phi(u\otimes v)\mid u,v\in\H\}$  to be total in $\L^2(\Si)$. 
For instance, this property fails if $\pi$ is any irreducible representation of any compact group $\Si\ne\{\1\}$, 
since we then have $\dim\Hc<\infty$ and the Peter-Weyl decomposition of $L^2(\Sigma)$ 
(see also Corollary~\ref{irred2}).  

Consequently, we need to introduce the closed subspace 
$$\mathscr B_2(\Si):=\Phi\(\H\widehat\otimes\,\overline{\H}\)\subseteq \L^2(\Si)$$ 
which
is unitarily equivalent to $\H\widehat\otimes\,\overline{\H}$
and hence with $\mathbb B_2(\H)$. 
This space $\mathscr B_2(\Si)$ is the closure in $\L^2(\Si)$ of the subspace
$\Phi(\H\!\otimes\overline{\H})$.
Clearly, there is a unitary operator 
\begin{equation}\label{fenix}
\Pi:=\Lambda\circ\Phi^{-1}:\mathscr B_2(\Si)\rightarrow\mathbb B_2(\H)
\end{equation}
uniquely determined by
\begin{equation}\label{capra}
\Pi\(\phi_{u,v}\)=\lambda_{u,v}=\<\cdot,v\>u\,,\quad\ \forall\,u,v\in\H
\end{equation}
and satisfying
\begin{equation*}
\Tr\[\Pi(f)\Pi(g)\!^*\]=\<f,g\>_{(\Si)}:=\int_\Si\! f(s)\,\overline{g(s)}d\mu(s)\,,\
\quad\forall\,f,g\in \mathscr B_2(\Si).
\end{equation*}
For the sake of clarity we also note the commutative diagram 
$$\xymatrix{
\Hc\widehat{\otimes}\overline{\Hc} \ar[r]^{\Phi} \ar[d]_{\Lambda} & {\mathscr B_2(\Sigma)} \ar[dl]^{\Pi}\\
{\mathbb B_2(\Hc)} &  
}
$$
whose arrows are isomorphisms of $H^*$-algebras, and in particular unitary operators.  

\begin{remark}\label{graur}
\normalfont
The above commuting diagram can be connected with previous con\-struc\-tions. 
For example, in the context of Proposition \ref{SQ_prod}, 
we can use $\pi_j\in SQ(\mathbb B(\H_j),\mu_j)$ to construct the $H^*$-algebra $\mathscr B_2(\Si_j)$ 
and the map $\Pi_j$, while $\pi_1\otimes\pi_2$ serves in the same way to construct 
the $H^*$-algebra $\mathscr B_2(\Si_1\times\Si_2)$ and the map $\Pi$. 
As a direct consequence of Proposition \ref{SQ_prod}, 
$\mathscr B_2(\Si_1\times\Si_2)$ can be identified with $\mathscr B_2(\Si_1)\widehat\otimes\mathscr B_2(\Si_2)$ 
and $\Pi$ with $\Pi_1\otimes\Pi_2$. 
And in the setting of Proposition \ref{SQ_push} 
one has $f\in\mathscr B_2(\Si_1)$ if and only if 
$f\circ p\in\Phi^{\pi_2}(\iota\H_1\widehat\otimes\,\overline{\iota\H_1})\subset\mathscr B_2(\Si_2)$, 
and then $\Pi_1(f)=\iota^*\Pi_2(f\circ p)\iota$. 
If $\iota$ is unitary, 
the two $H^*$-algebras $\mathscr B_2(\Si_1)$ and $\mathscr B_2(\Si_2)$ 
are isomorphic through the transformation $f\mapsto f\circ p$.
\end{remark}

\subsection{Basic properties of $\Pi$}

\begin{proposition}\label{lama}
For any $f\in\mathscr B_2(\Si)$ one has in weak sense
\begin{equation}\label{guanaco}
\Pi(f)=\int_\Si\! f(s)\,\pi(s)^*d\mu(s)\,,\quad\Pi(f)^*=\int_\Si\! \overline{f(s)}\,\pi(s)d\mu(s).
\end{equation}
\end{proposition}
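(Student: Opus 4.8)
The plan is to verify the two displayed formulas in the weak sense, that is, to check that for all $u,w\in\Hc$ the scalar $\langle\Pi(f)u,w\rangle$ equals $\int_\Si f(s)\langle\pi(s)^*u,w\rangle\,d\mu(s)$, and similarly for $\Pi(f)^*$. Since both sides are continuous in $f$ with respect to the $\L^2(\Si)$-norm on the one hand (because $\Pi$ is an isometry onto $\mathbb B_2(\H)$, hence $\langle\Pi(f)u,w\rangle$ depends continuously on $f$) and the right-hand integral is likewise controlled by square-integrability, it suffices to establish the identity on the dense subspace $\Phi(\H\otimes\overline\H)\subseteq\mathscr B_2(\Si)$, and by linearity on elementary tensors $f=\phi_{a,b}$ with $a,b\in\H$.

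For such $f=\phi_{a,b}$, the defining relation \eqref{capra} gives $\Pi(\phi_{a,b})=\lambda_{a,b}=\langle\cdot,b\rangle a$, so the left-hand side is simply $\langle\lambda_{a,b}u,w\rangle=\langle u,b\rangle\langle a,w\rangle$. The right-hand side is $\int_\Si\phi_{a,b}(s)\langle\pi(s)^*u,w\rangle\,d\mu(s)=\int_\Si\langle\pi(s)a,b\rangle\,\overline{\langle\pi(s)w,u\rangle}\,d\mu(s)$, where I have used $\langle\pi(s)^*u,w\rangle=\overline{\langle\pi(s)w,u\rangle}$. This last integral is precisely of the form appearing in the polarized square-integrability identity \eqref{SQ_def}: taking $u_1=a$, $v_1=b$, $u_2=w$, $v_2=u$, the relation \eqref{SQ_def} reads $\int_\Si\langle\pi(s)a,b\rangle\langle u,\pi(s)w\rangle\,d\mu(s)=\langle a,w\rangle\langle u,b\rangle$, which matches the left-hand side exactly. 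This settles the first formula on the dense subspace, and the density/continuity argument extends it to all of $\mathscr B_2(\Si)$.

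The second formula follows either by an entirely parallel computation, or more economically by taking adjoints in the first: since $\Pi(f)^*$ is the operator adjoint and $\overline{\phi_{a,b}}=\phi_{b,a}$ (as $\overline{\langle\pi(s)a,b\rangle}=\langle\pi(s)^*b,a\rangle$ need only be handled through \eqref{camatar}-type manipulations), one checks that conjugating $f$ and replacing $\pi(s)^*$ by $\pi(s)$ is consistent with the involution. The main point requiring care — and the only genuine obstacle — is the justification that these integrals converge and that ``weak sense'' is legitimate: one must confirm that $s\mapsto f(s)\langle\pi(s)^*u,w\rangle$ is integrable, which is guaranteed by Cauchy--Schwarz together with $f\in\L^2(\Si)$ and the square-integrability bound $\int_\Si|\langle\pi(s)^*u,w\rangle|^2\,d\mu(s)=\|u\|^2\|w\|^2<\infty$ coming from \eqref{porc}. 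Once integrability is secured, the identification of the weak integral with $\Pi(f)$ is forced by the computation above on the dense subspace.
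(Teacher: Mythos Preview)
Your argument is correct, but the paper proceeds differently and more directly. Rather than first checking the identity on elementary tensors $\phi_{a,b}$ and then extending by density and continuity, the paper establishes the formula for \emph{every} $f\in\mathscr B_2(\Si)$ in one stroke by exploiting the unitarity of $\Pi$ at the level of the Hilbert--Schmidt inner product: for $u,v\in\Hc$ one writes
\[
\langle\Pi(f)u,v\rangle=\Tr[\Pi(f)\lambda_{u,v}]=\Tr[\Pi(f)\Pi(\phi_{v,u})^*]=\langle f,\phi_{v,u}\rangle_{(\Si)}=\int_\Si f(s)\langle\pi(s)^*u,v\rangle\,d\mu(s),
\]
using \eqref{catar}--\eqref{camatar} and \eqref{capra}. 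This sidesteps the separate density/continuity verification you perform. Your route, on the other hand, is slightly more elementary in that it appeals only to the orthogonality relation \eqref{SQ_def} and a Cauchy--Schwarz estimate, without invoking the trace formalism for rank-one operators; the cost is the extra approximation step. Both approaches ultimately rest on the same identity $\langle\Pi(f)u,v\rangle=\langle f,\phi_{v,u}\rangle_{(\Si)}$, but the paper obtains it globally while you obtain it locally and extend.
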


\begin{proof}
If $u,v\in\H$ one has
$$
\begin{aligned}
\<\Pi(f)u,v\>=&\,\Tr\[\lambda_{\Pi(f)u,v}\]=\,\Tr\[\Pi(f)\,\lambda_{u,v}\,\]\\
=&\,\Tr\[\Pi(f)\,\Pi\(\phi_{v,u}\)^*\,\]=\<f,\phi_{v,u}\>_{(\Si)}\\
=&\int_\Si\!f(s)\<\pi(s)^*u,v\>d\mu(s) .
\end{aligned}
$$
Then the second formula follows from the first one.
\end{proof}

The next simple corollary is needed in the proof of Proposition~\ref{meta}.

\begin{corollary}\label{adjoint}
The adjoint of the isometry $\Phi\circ\Lambda^{-1}\colon\BB_2(\Hc)\to\L^2(\Sigma)$ 
is given by the weakly convergent integral 
$$(\Phi\circ\Lambda^{-1})^*f=\int_\Sigma f(s)\pi(s)^*d\mu(s)$$ 
for every $f\in\L^2(\Sigma)$.  
\end{corollary}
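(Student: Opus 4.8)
The plan is to set $W:=\Phi\circ\Lambda^{-1}$ and observe first that $W\colon\BB_2(\Hc)\to\L^2(\Sigma)$ is an isometry onto the closed subspace $\mathscr B_2(\Sigma)$, being the composition of the unitary $\Lambda^{-1}\colon\BB_2(\Hc)\to\Hc\widehat\otimes\overline{\Hc}$ with the isometry $\Phi$ of Hypothesis~\ref{cal}. Hence $W^*\colon\L^2(\Sigma)\to\BB_2(\Hc)$ is a well-defined bounded operator, and the entire task reduces to identifying it. I would do so by computing the weak matrix elements of $W^*f$ against rank-one operators and comparing them with the matrix elements of the proposed integral.

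First I would record the elementary identity $\la Su,v\ra=\la S,\lambda_{v,u}\ra_{\BB_2(\Hc)}$, valid for every $S\in\BB_2(\Hc)$ and $u,v\in\Hc$: indeed, by \eqref{catar} and \eqref{camatar} one has $\la Su,v\ra=\Tr(\lambda_{Su,v})=\Tr(S\lambda_{u,v})=\Tr(S\lambda_{v,u}^*)$, which is $\la S,\lambda_{v,u}\ra_{\BB_2(\Hc)}$ by the definition of the Hilbert--Schmidt scalar product. Applying this with $S=W^*f$ and using $W\lambda_{v,u}=\Phi(v\otimes u)=\phi_{v,u}$ (see \eqref{capra}), I would obtain $\la (W^*f)u,v\ra=\la W^*f,\lambda_{v,u}\ra_{\BB_2(\Hc)}=\la f,\phi_{v,u}\ra_{\L^2(\Sigma)}$. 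Expanding the right-hand scalar product and using $\overline{\phi_{v,u}(s)}=\overline{\la\pi(s)v,u\ra}=\la\pi(s)^*u,v\ra$ then yields $\la (W^*f)u,v\ra=\int_\Sigma f(s)\la\pi(s)^*u,v\ra\,d\mu(s)$, which is exactly the $(u,v)$ matrix element of $\int_\Sigma f(s)\pi(s)^*\,d\mu(s)$.

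The one point requiring genuine attention, and where Hypothesis~\ref{cal} does real work, is that this weak integral is well defined for \emph{every} $f\in\L^2(\Sigma)$ rather than only for $f\in\mathscr B_2(\Sigma)$ as in Proposition~\ref{lama}: since $\phi_{v,u}\in\L^2(\Sigma)$ by Hypothesis~\ref{cal}, the integrand $f\,\overline{\phi_{v,u}}$ lies in $\L^1(\Sigma)$ by the Cauchy--Schwarz inequality, so the matrix element converges absolutely and depends continuously on $f$. Apart from this, the only care needed is the bookkeeping of the antilinearity convention in the scalar products; the computation itself is a direct unwinding of the definitions of $\Phi$ and $\Lambda$ together with \eqref{catar}--\eqref{camatar}. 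Since $W^*f$ is a Hilbert--Schmidt operator whose weak matrix elements coincide with those of $\int_\Sigma f(s)\pi(s)^*\,d\mu(s)$, the two operators agree, which is the assertion of the corollary.
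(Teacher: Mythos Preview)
Your proof is correct and follows essentially the same route as the paper's. The paper packages the argument slightly more structurally by writing $(\Phi\circ\Lambda^{-1})^*=\Pi\circ\iota^*$, where $\iota^*$ is the orthogonal projection of $\L^2(\Sigma)$ onto $\mathscr B_2(\Sigma)$, and then invoking Proposition~\ref{lama}; your direct matrix-element computation against $\lambda_{v,u}$ is exactly the calculation that underlies Proposition~\ref{lama}, carried out in one pass and extended to all of $\L^2(\Sigma)$ via the observation that $\phi_{v,u}\in\L^2(\Sigma)$.
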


\begin{proof}
One has $\Phi\circ\Lambda^{-1}=\Pi^{-1}= \Pi^\ast$, since $\Pi$ is unitary. 
The assertion follows now by Proposition~\ref{lama}.  
\end{proof}

By transport of structure one defines a composition law and an involution:
$$
\begin{aligned}
\star:\mathscr B_2(\Si)\times \mathscr B_2(\Si)\rightarrow \mathscr B_2(\Si), & \quad f\star g:=\Pi^{-1}\!\[\Pi(f)\Pi(g)\],\\
^\star:\mathscr B_2(\Si)\rightarrow \mathscr B_2(\Si), & \quad f^\star:=\Pi^{-1}\!\[\Pi(f)\!^*\].
\end{aligned}
$$
Therefore  
$\mathscr B_2(\Si)$ {\it is an $H^*$-algebra and $\Pi:\mathscr B_2(\Si)\rightarrow\mathbb B_2(\H)$
is an $H^*$-algebra isomorphism}. 
Thus one has for all $f,g,h\in \mathscr B_2(\Si)$
\begin{equation*}
\begin{aligned}
\<f^{\star},g^{\star}\>_{(\Si)} &=\<g,f\>_{(\Si)}, \\
\<f\star g,h\>_{(\Si)} & =\<f,h\star g^{\star}\>_{(\Si)}=\<g,f^{\star}\!\star h\>_{(\Si)}.
\end{aligned}
\end{equation*}
The Hilbert subalgebra $\mathscr B_1(\Si):=\mathscr B_2(\Si)\,\td\, \mathscr B_2(\Si)$ is dense in
$\mathscr B_2(\Si)$ and for every $g\in \mathscr B_2(\Si)$ the maps
\begin{equation*}
\mathscr B_2(\Si)\ni f\mapsto 
g\td f\in \mathscr B_2(\Si)\,,\ \quad
\mathscr B_2(\Si)\ni f\mapsto 
f\td g\in \mathscr B_2(\Si)
\end{equation*}
are continuous. 

We notice for further use the relations
\begin{align}\label{ratoi}
\<\phi_{u_1,v_1},\phi_{u_2,v_2}\>_{(\Si)} & =\<u_1,u_2\>\<v_2,v_1\>\\
\phi_{u_1,v_1}\!\star\phi_{u_2,v_2} & =\<u_2,v_1\>\phi_{u_1,v_2},\quad\ \phi_{u,v}^\star=\phi_{v,u},  \label{berbec}
\end{align}
valid for every $u,u_1,u_2,v,v_1,v_2\in\H$, as well as
\begin{equation*}
f\td\phi_{u,v}\td g=\phi_{\Pi(f)u,\Pi(g)^*v}\,,\quad\ \forall\, f,g\in \mathscr B_2(\Si),\ u,v\in\H.
\end{equation*}
In particular, if $\,\p\!u\!\p\,=1$, then $\phi_{u,u}$ is a self-adjoint projection, represented by $\Pi$
as the rank-one operator $\lambda_{u,u}$.
Also notice that $\mathscr B_1(\Si)$ and $\mathscr B_2(\Si)$ are Banach $^*$-algebras, 
where $\mathscr B_1(\Si)$ is endowed with the norm for which the linear bijection 
$\Pi\colon \mathscr B_1(\Si)\to \mathbb B_1(\H)$ 
is an isometry.

\subsection{Extensions and the explicit form of the composition law}\label{platica}

In this subsection we extend some of the above maps to larger symbol spaces. 
We define a new norm
\begin{equation*}
\p\cdot\p_{\mathscr B(\Si)}\,:\mathscr B_2(\Si)\rightarrow\mathbb R_+\,,\quad\
\p\!f\!\p_{\mathscr B(\Si)}\,:=\,\p\!\Pi(f)\!\p_{\mathbb B(\H)}.
\end{equation*}
The completion of $\mathscr B_2(\Si)$ under this norm is a $C^*$-algebra $\mathscr B_\infty(\Si)$ containing $\mathscr B_2(\Si)$ as a dense $^*$-ideal (this one also endowed with the stronger Hilbert topology). 
Clearly $\Pi$ extends to a $C^*$-algebraic monomorphism $\Pi:\mathscr B_\infty(\Si)\rightarrow\mathbb B(\H)$ with range $\Pi\[\mathscr B_\infty(\Si)\]=\mathbb B_\infty(\H)$,
the ideal of all compact operators in $\H$. 
Then we denote by $\mathscr B(\Si)$ the multiplier $C^*$-algebra \cite{WO}
of $\mathscr B_\infty(\Si)$, which is isomorphic by a canonical extension of $\Pi$ with $\mathbb B(\H)$\,,
which can in turn be identified with the multiplier $C^*$-algebra of $\mathbb B_\infty(\H)$. 
We keep the same notations $\td$ and $^{\td}$ for the composition law and the involution on $\mathscr B(\Si)$. 
Based on the constructions above, the elements of $\mathscr B_1(\Si)$ ($\mathscr B_2(\Si)$, $\mathscr B_\infty(\Si)$) will be called respectively {\it trace-class (Hilbert-Schmidt, compact) symbols}. 
For the elements of 
$\mathscr B(\Si)$, to eliminate any possible confusion, we reserve the term {\it operator-bounded symbols}.

The spaces $\mathscr B_q(\Si)$\,, $q=1,2,\infty$ are still $^*$-ideals in $\mathscr B(\Si)$ and the scalar product
$\<\cdot,\cdot\>_{\mathscr B_2(\Si)}$ can be ``extended'' to sesquilinear forms
$$
\<\cdot,\cdot\>_{(\Si)}:\mathscr B_1(\Si)\times\mathscr B(\Si)\rightarrow\C\,,\quad
\<\cdot,\cdot\>_{(\Si)}:\mathscr B(\Si)\times \mathscr B_1(\Si)\rightarrow\C.
$$
For this one simply sets $\,\<f,g\>_{(\Si)}:=\Tr\[\Pi(f)\Pi(g)^*\]\,$ (definitions by approximation are also available).
Note for $f\in\mathscr B_1(\Si)$ and $g\in\mathscr B(\Si)$ the inequality
\begin{equation*}
|\<f,g\>_{(\Si)}|\le\,\p\!f\!\p_{\mathscr B_1(\Si)}\,\p\!g\!\p_{\mathscr B(\Si)}\,\equiv\,\,\p\!
\Pi(f)\!\p_{\mathbb B_1(\H)}\,\p\!\Pi(g)\!\p_{\mathbb B(\H)}.
\end{equation*}
Due to the cyclicity of the trace, if $f,g,h\in\mathscr B(\Si)$ and one of them belongs to $\mathscr B_1(\Si)$
(or two of them belong to $\mathscr B_2(\Si)$)\,, one has
\begin{equation*}
\<f\td g,h\>_{(\Si)}=\<f,h\td g^{\td}\>_{(\Si)}=\<g,f^{\td}\!\td h\>_{(\Si)}.
\end{equation*}

Let us put 
$$(\forall s\in\Sigma)\quad e_s:=\Pi^{-1}\[\pi(s)^*\]\in\mathscr B(\Si)$$
hence $\phi_{u,v}(s)=\<u,\Pi(e_s)v\>$ for all $u,v$ and $s$ and
\begin{equation}\label{arici}
\pi(s)^*=\Pi\(e_s\),\quad\ \pi(s)=\Pi\(e^{\td}_s\).
\end{equation}

\begin{proposition}\label{guvid}
For every $f\in \mathscr B_1(\Si)$ one has
\begin{equation}\label{dromader}
\<f,e_s\>_{(\Si)}=f(s)\,,\quad\<f,e_s^{\td}\>_{(\Si)}=\overline{f^{\td}(s)}\,, \quad\ \mu-a.e.\ s\in\Si.
\end{equation}
\end{proposition}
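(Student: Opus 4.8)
The plan is to rewrite the extended pairing as an operator trace, verify the first identity on the simple symbols $\phi_{u,v}$, and then pass to arbitrary trace-class symbols by a density argument that reconciles pointwise convergence with $\L^2$-convergence. First I would unfold the definitions: since the extended form is $\langle f,g\rangle_{(\Si)}=\Tr[\Pi(f)\Pi(g)^*]$ and $\Pi(e_s)=\pi(s)^*$ by \eqref{arici}, we have $\Pi(e_s)^*=\pi(s)$, hence
$$\langle f,e_s\rangle_{(\Si)}=\Tr[\Pi(f)\pi(s)]$$
for every $f\in\mathscr B_1(\Si)$ and every $s\in\Si$; note the right-hand side makes sense for all $s$, because $\Pi(f)\in\mathbb B_1(\H)$ and $\pi(s)\in\mathbb B(\H)$. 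For a simple symbol $f=\phi_{u,v}$ I would compute directly, using $\Pi(\phi_{u,v})=\lambda_{u,v}$ from \eqref{capra}, the relation $\lambda_{u,v}\pi(s)=\lambda_{u,\pi(s)^*v}$ from \eqref{catar}, and $\Tr(\lambda_{u,w})=\langle u,w\rangle$ from \eqref{camatar}, to get
$$\langle\phi_{u,v},e_s\rangle_{(\Si)}=\Tr[\lambda_{u,v}\pi(s)]=\langle u,\pi(s)^*v\rangle=\langle\pi(s)u,v\rangle=\phi_{u,v}(s)$$
for every $s\in\Si$. Thus the identity holds on the dense set of simple symbols, with equality everywhere rather than merely almost everywhere.

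Next I would extend to an arbitrary $f\in\mathscr B_1(\Si)$. Since $\Pi(f)\in\mathbb B_1(\H)$, its Schmidt expansion produces orthonormal systems $(\xi_n),(\eta_n)$ in $\H$ and scalars $c_n\ge 0$ with $\sum_n c_n<\infty$ such that $\Pi(f)=\sum_n c_n\lambda_{\xi_n,\eta_n}$ in trace norm; applying $\Pi^{-1}$ yields $f=\sum_n c_n\phi_{\xi_n,\eta_n}$ in $\mathscr B_1(\Si)$, and hence in $\L^2(\Si)$, because the inclusion $\mathscr B_1(\Si)\hookrightarrow\mathscr B_2(\Si)$ is continuous and $\mathscr B_2(\Si)\hookrightarrow\L^2(\Si)$ is isometric. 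On one hand, for each \emph{fixed} $s$ the functional $\Tr[\,\cdot\,\pi(s)]$ is continuous on $\mathbb B_1(\H)$ (with bound $\Vert\cdot\Vert_1\,\Vert\pi(s)\Vert$), so the simple-symbol computation gives $\langle f,e_s\rangle_{(\Si)}=\sum_n c_n\phi_{\xi_n,\eta_n}(s)$ for every $s$. On the other hand the partial sums converge to $f$ in $\L^2(\Si)$, so some subsequence converges to $f$ pointwise $\mu$-a.e. Comparing the two limits identifies $\langle f,e_s\rangle_{(\Si)}$ with $f(s)$ for $\mu$-almost every $s$, which is the first identity.

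For the second identity I would invoke $\Pi(e_s^{\td})=\pi(s)$ from \eqref{arici}, so that $\langle f,e_s^{\td}\rangle_{(\Si)}=\Tr[\Pi(f)\pi(s)^*]$. By cyclicity of the trace and the conjugation rule $\overline{\Tr\,C}=\Tr\,C^*$, this equals $\overline{\Tr[\Pi(f)^*\pi(s)]}=\overline{\Tr[\Pi(f^{\td})\pi(s)]}=\overline{\langle f^{\td},e_s\rangle_{(\Si)}}$, where I used $\Pi(f^{\td})=\Pi(f)^*$ and the fact that $f^{\td}\in\mathscr B_1(\Si)$ since $\mathscr B_1(\Si)$ is closed under the involution. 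Applying the first identity to $f^{\td}$ then gives $\overline{f^{\td}(s)}$ for $\mu$-almost every $s$, as required. The one delicate point in the whole argument is precisely this passage from simple symbols to general trace-class symbols: the trace formula yields convergence of the symbol series at \emph{every} point $s$, whereas the identification of the limit with $f$ is available only $\mu$-a.e. through $\L^2$-convergence, and reconciling these two modes of convergence via an a.e.-convergent subsequence is exactly what forces the conclusion to hold only almost everywhere.
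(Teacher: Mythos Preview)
Your proof is correct and follows essentially the same route as the paper: verify the identity on simple symbols $\phi_{u,v}$ via the trace formula $\langle f,e_s\rangle_{(\Si)}=\Tr[\Pi(f)\pi(s)]$, approximate a general $f\in\mathscr B_1(\Si)$ in trace norm (the paper uses an arbitrary sequence from $\Phi(\H\otimes\overline\H)$ where you use the Schmidt decomposition, a cosmetic difference), and reconcile the pointwise convergence of the trace pairing with the $\L^2$-convergence of symbols by passing to an a.e.-convergent subsequence. Your derivation of the second identity is a slight rewriting of the paper's, which simply notes $\langle f,e_s^{\td}\rangle_{(\Si)}=\langle e_s,f^{\td}\rangle_{(\Si)}=\overline{\langle f^{\td},e_s\rangle_{(\Si)}}$.
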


\begin{proof}
By a direct computation using (\ref{capra}), (\ref{catar}) and (\ref{camatar}) one gets for $u,v\in\H,\ s\in\Si$
$$
\<\phi_{u,v},e_s\>_{\Si}=\Tr\[\Pi(\phi_{u,v})\pi(s)\]=\Tr\[\lambda_{u,v}\pi(s)\]=\Tr\[\lambda_{u,\pi(s)^*v}\]=\phi_{u,v}(s)\,,
$$
so the same is true for $\phi_{u,v}$ replaced by any element of $\Phi(\H\otimes\overline{\H})$\,,
which is dense in $\mathscr B_1(\Si)$.

Assume now that a sequence $\{g_n\}_{n\in\mathbb N}\subset\Phi(\H\otimes\overline{\H})$ converges to
$f\in\mathscr B_1(\Si)$ with respect to the trace norm. Then for each $s\in\Si$
$$
\begin{aligned}
|\<f,e_s\>_{(\Si)}-\<g_n,e_s\>_{(\Si)}|&=|\,\Tr\[\Pi(f-g_n)\pi(s)\]|\\
&\le\,\p\!\Pi(f-g_n)\!\p_{\mathbb B_1(\H)}\,\p\!\pi(s)\!\p_{\mathbb B(\H)}\\
&=\p\!f-g_n\!\p_{\mathscr B_1(\Si)}\p\!\pi(s)\!\p_{\mathbb B(\H)}\underset{n\rightarrow\infty}{\longrightarrow}0
\end{aligned}
$$
recalling that $\Pi\colon \mathscr B_1(\Si)\to \mathbb B_1(\H)$ 
is an isometry by the definition of the norm of $\mathscr B_1(\Si)$. 
Since convergence in $\mathscr B_1(\Si)$ implies convergence in $\L^2(\Si)$ which, in its turn,
implies $\mu$-almost everywhere convergence of a subsequence, there is a $\mu$-negligible set $M\subset\Si$ and
a subsequence $\{g_{n_k}\}_{k\in\mathbb N}$ such that for every $s\in\Si\setminus M$
$$
f(s)=\lim_{k\rightarrow\infty} g_{n_k}(s)=\lim_{k\rightarrow\infty} \<g_{n_k},e_s\>_{(\Si)}=
\<f,e_s\>_{(\Si)}.
$$
Then for $s$ belonging to the same set $s\in\Si\setminus M$ one has
$$
\<f,e_s^{\td}\>_{(\Si)}=\<e_s,f^\star\>_{(\Si)}=\overline{\<f^\star,e_s\>}_{(\Si)}=\overline{f^{\td}(s)}.
$$
\end{proof}

\begin{corollary}
For every $f,g\in \mathscr B_1(\Si)$ one has
\begin{equation*}
\int_\Si\!\la f,e_s\ra_{(\Si)}\la e_s,g\ra_{(\Si)}\,d\mu(s)=\la f,g\ra_{(\Si)}.
\end{equation*}
\end{corollary}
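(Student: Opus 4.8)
The plan is to reduce the identity directly to Proposition~\ref{guvid}, which already evaluates the pairings $\la f,e_s\ra_{(\Si)}$ pointwise in terms of the values $f(s)$.

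First I would invoke Proposition~\ref{guvid}: since $f,g\in\mathscr B_1(\Si)$, it gives $\la f,e_s\ra_{(\Si)}=f(s)$ and, applied to $g$ in place of $f$, also $\la g,e_s\ra_{(\Si)}=g(s)$ for $\mu$-almost every $s\in\Si$. Using the conjugate symmetry of the sesquilinear form $\la\cdot,\cdot\ra_{(\Si)}$ (which is antilinear in the second variable), I would then rewrite $\la e_s,g\ra_{(\Si)}=\overline{\la g,e_s\ra_{(\Si)}}=\overline{g(s)}$ for a.e.\ $s$.

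Substituting these two identities into the integrand, the left-hand side becomes $\int_\Si f(s)\,\overline{g(s)}\,d\mu(s)$, which is by definition $\la f,g\ra_{(\Si)}$, namely the $\L^2(\Si)$ scalar product restricted to the dense subalgebra $\mathscr B_1(\Si)\subset\mathscr B_2(\Si)$. That would close the computation.

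The one technical point to verify is the absolute convergence of the integral on the left, so that the substitution is legitimate. This is not really an obstacle: because $\mathscr B_1(\Si)\subset\mathscr B_2(\Si)$ embeds isometrically into $\L^2(\Si)$, both $f$ and $g$ are square-integrable, whence $f\overline g\in\L^1(\Si)$ by the Cauchy--Schwarz inequality. Thus the genuine content of the corollary is just the pointwise evaluation supplied by Proposition~\ref{guvid}, and there is no substantive difficulty beyond combining that result with the conjugate symmetry of the inner product.
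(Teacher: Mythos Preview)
Your proof is correct and follows exactly the approach the paper has in mind: the paper's own argument is the single line ``Follows immediately from Proposition~\ref{guvid},'' and your write-up simply spells out that immediate consequence, with the added (harmless) care of noting $f\overline g\in\L^1(\Si)$ via Cauchy--Schwarz.
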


\begin{proof}
Follows immediately from Proposition~\ref{guvid}.
\end{proof}

Now we can compute the symbol of a trace-class operator.

\begin{corollary}\label{fitofag}
For $\,T\in\mathbb B_1(\H)\subset\mathbb B_2(\H)$ one has
\begin{equation}\label{crapcean}
\[\Pi^{-1}(T)\]\!(s)=\Tr\[T\pi(s)\],\quad\ \mu\text{-a.e. }s\in\Si.
\end{equation}
\end{corollary}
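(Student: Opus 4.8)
The plan is to deduce the formula directly from Proposition~\ref{guvid}, after matching up the objects involved. First I would observe that the restriction of $\Pi$ is an $H^*$-algebra isomorphism carrying $\mathscr B_1(\Si)$ onto $\mathbb B_1(\H)$, so the hypothesis $T\in\mathbb B_1(\H)$ forces $f:=\Pi^{-1}(T)\in\mathscr B_1(\Si)$. This is precisely the regularity under which Proposition~\ref{guvid} applies, and it gives
$$[\Pi^{-1}(T)](s)=f(s)=\la f,e_s\ra_{(\Si)}\quad\mu\text{-a.e. }s\in\Si.$$

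Next I would unwind the pairing on the right-hand side. By the definition $\la f,g\ra_{(\Si)}:=\Tr[\Pi(f)\Pi(g)^*]$ of the extended sesquilinear form on $\mathscr B_1(\Si)\times\mathscr B(\Si)$, together with the identity $\Pi(e_s)=\pi(s)^*$ from \eqref{arici} (so that $\Pi(e_s)^*=\pi(s)$), substituting $f=\Pi^{-1}(T)$ and $g=e_s$ yields
$$\la\Pi^{-1}(T),e_s\ra_{(\Si)}=\Tr[\Pi(\Pi^{-1}(T))\,\Pi(e_s)^*]=\Tr[T\pi(s)].$$
Combining the two displays gives the claimed equality $[\Pi^{-1}(T)](s)=\Tr[T\pi(s)]$ for $\mu$-a.e.\ $s\in\Si$.

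Since the whole argument is a short chain of identifications, there is no genuine obstacle to overcome; the entire content has already been packaged into Proposition~\ref{guvid}. The only points requiring care are bookkeeping ones: that the equality holds merely almost everywhere rather than everywhere (this is already built into Proposition~\ref{guvid}, where the pointwise value $f(s)$ is recovered through an $\L^2$-approximation passing to an a.e.-convergent subsequence), and that $e_s$ lives in the multiplier algebra $\mathscr B(\Si)$ while $f$ lives in the ideal $\mathscr B_1(\Si)$, so that the pairing $\la f,e_s\ra_{(\Si)}$ is interpreted as the one defined on $\mathscr B_1(\Si)\times\mathscr B(\Si)$ and the trace $\Tr[T\pi(s)]$ is well defined because $T$ is trace-class and $\pi(s)$ bounded.
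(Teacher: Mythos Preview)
Your proof is correct and follows essentially the same route as the paper: both arguments set $f=\Pi^{-1}(T)\in\mathscr B_1(\Si)$, invoke Proposition~\ref{guvid} to write $f(s)=\la f,e_s\ra_{(\Si)}$ almost everywhere, and then use the definition of the extended pairing together with \eqref{arici} to identify $\la f,e_s\ra_{(\Si)}=\Tr[\Pi(f)\pi(s)]=\Tr[T\pi(s)]$. The paper phrases this as checking that $s\mapsto\Tr[T\pi(s)]$ gives a left inverse for $\Pi$, but the content is identical.
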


\begin{proof}
Let us denote for the moment by $\Pi^{(-1)}$ the mapping defined at (\ref{crapcean}).
It is enough to show that $\Pi^{(-1)}\[\Pi(f)\]=f$ hold $\mu$-a.e. for every $f$ belonging $\mathscr B_1(\Si)$.

But for $\mu$-almost every $s\in\Si$\,, one has by (\ref{arici}) and (\ref{dromader})
$$
\(\Pi^{(-1)}\[\Pi(f)\]\)\!(s)=\Tr\[\Pi(f)\pi(s)\]=\<f,e_s\>_{(\Si)}=f(s).
$$
\end{proof}

\begin{corollary}\label{dalailama}
For each $S\in\mathbb B_1(\H)$ and each $f\in\mathscr B_2(\Si)$ one has
\begin{align}
\Tr[\Pi(f)S] & =\int_\Si\! f(s)\,\Tr\left[\pi(s)^*\!S\right]d\mu(s),\nonumber \\
\label{magaruz}
\Tr[\Pi(f)^*\!S] & =\int_\Si\! \overline{f(s)}\,\Tr\left[\pi(s)S\right]d\mu(s).
\end{align}
\end{corollary}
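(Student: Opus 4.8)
The plan is to reduce both identities to the fundamental relation $\Tr[\Pi(f)\Pi(g)^*]=\<f,g\>_{(\Si)}$ recorded after \eqref{capra}, together with the explicit dequantization formula of Corollary~\ref{fitofag}, rather than attempting to interchange the trace with the weakly convergent integral of Proposition~\ref{lama}. The latter route is tempting, since $\Pi(f)=\int_\Si f(s)\pi(s)^*d\mu(s)$ holds weakly, but the trace is not continuous for the weak operator topology, so that interchange is not automatically licensed; the scalar-product identity sidesteps this difficulty entirely.

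For the first formula I would set $g:=\Pi^{-1}(S^*)$. Since $S\in\mathbb B_1(\H)\subseteq\mathbb B_2(\H)$ and $\Pi\colon\mathscr B_2(\Si)\to\mathbb B_2(\H)$ is a unitary isomorphism, $g$ is a well-defined element of $\mathscr B_2(\Si)$ (in fact of $\mathscr B_1(\Si)$), and $\Pi(g)^*=S$. Applying the scalar-product identity then gives
$$\Tr[\Pi(f)S]=\Tr[\Pi(f)\Pi(g)^*]=\<f,g\>_{(\Si)}=\int_\Si f(s)\,\overline{g(s)}\,d\mu(s).$$
By Corollary~\ref{fitofag} one has $g(s)=[\Pi^{-1}(S^*)](s)=\Tr[S^*\pi(s)]$ for $\mu$-a.e.\ $s$, whence $\overline{g(s)}=\Tr[(S^*\pi(s))^*]=\Tr[\pi(s)^*S]$. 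Substituting this yields the first formula.

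For the second formula I would instead set $h:=\Pi^{-1}(S)\in\mathscr B_1(\Si)$, so that $\Pi(h)=S$. Using the cyclicity of the trace and the same scalar-product identity,
$$\Tr[\Pi(f)^*S]=\Tr[\Pi(h)\Pi(f)^*]=\<h,f\>_{(\Si)}=\int_\Si h(s)\,\overline{f(s)}\,d\mu(s),$$
and Corollary~\ref{fitofag} gives $h(s)=\Tr[S\pi(s)]=\Tr[\pi(s)S]$, which is exactly the integrand of the second formula.

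Since everything rests on identities already established for $\mathscr B_2(\Si)$, there is no genuine analytic obstacle; the only points requiring care are the verification that $\Pi^{-1}$ sends both $S$ and $S^*$ into $\mathscr B_2(\Si)$, so that the scalar-product identity is applicable, and the routine bookkeeping of adjoints and complex conjugates, which must be tracked carefully because the inner product $\<\cdot,\cdot\>_{(\Si)}$ is antilinear in its second argument.
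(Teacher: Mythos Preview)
Your proof is correct and follows essentially the same route as the paper: set $g=\Pi^{-1}(S^*)$, rewrite $\Tr[\Pi(f)S]$ as the $\mathscr B_2(\Si)$ scalar product $\<f,g\>_{(\Si)}$ via the unitarity of $\Pi$, and then invoke Corollary~\ref{fitofag} to identify $g(s)$. The paper handles the second identity by the same method (``follows similarly''), whereas you use $h=\Pi^{-1}(S)$ together with trace cyclicity, which is an equally valid minor variant.
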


\begin{proof}
One uses the definitions, the fact that $\Pi$ is unitary and formula (\ref{crapcean}):
$$
\begin{aligned}
\Tr[\Pi(f)S]=&\<\Pi(f),\Pi\[\Pi^{-1}(S^*)\]\>_{\mathbb B_2(\H)}=\<f,\Pi^{-1}(S^*)\>_{(\Sigma)}\\
=&\int_\Si\!f(s)\overline{\[\Pi^{-1}(S^*)\](s)}d\mu(s)=\int_\Si\!f(s)\overline{\Tr\[S^*\pi(s)\]}d\mu(s)\\
=&\int_\Si\!f(s)\Tr\left[\pi(s)^*\!S\right]d\mu(s).
\end{aligned}
$$
The relation \eqref{magaruz} follows similarly.
\end{proof}

Corollary~\ref{dalailama} (which can be alternatively derived from the equation \eqref{guanaco})
reenforces Proposition \ref{lama}, recovered by taking $S$ to be a rank one operator.

\begin{remark}\label{rechinas}
{\rm One can also justify for every $f\in\mathscr B_2(\Si)$ the relations
\begin{equation*}
f=\int_\Si\! f(s)\,e_sd\mu(s)\,,\quad\ f^\star=\int_\Si\! \overline{f(s)}\,e_s^\star d\mu(s)\,;
\end{equation*}
for example, if $g\in\mathscr B_1(\Si)$, then
$$
\int_\Si\! f(s)\<e_s,g\>_{(\Si)}d\mu(s)=\int_\Si\! f(s)\overline{g(s)}d\mu(s)=\<f,g\>_{(\Si)}.
$$
In general $\mathscr B_2(\Si)$ is not a reproducing kernel Hilbert space; the symbols $e_s$ rarely belong to $\mathscr B_2(\Si)$.
}
\end{remark}

We give now explicit formulae for the algebraic structure. 
In the following statement we use the fact that for any complex Hilbert space $\H$, 
the Banach algebra of trace-class operators $\mathbb B_1(\H)$ has right approximate units.   
For instance, the family of orthogonal projections onto finite-dimensional subspaces of $\H$ 
is even a two-sided approximate unit of $\mathbb B_1(\H)$. 
This follows by \cite[Th. 6.3, Ch. III]{GK69} on separable Hilbert spaces, 
and then it extends to arbitrary Hilbert spaces, writing every operator in $\mathbb B_1(\H)$ 
as a linear combination of self-adjoint operators and using the fact that 
the closure of the range of any compact self-adjoint operator is separable. 

\begin{theorem}\label{tipar}
Let $\{S_j\mid j\in J\}$ be any right approximate unit  in $\mathbb B_1(\H)$.
\begin{enumerate}
\item
If $f\in\mathscr B_1(\Si)$, for $\mu$-almost every $r\in\Si$ one has
\begin{equation*}
f^\star(r)=\lim_{j}\int_\Si\!\Tr\[\pi(r)\pi(s)S_j\]\overline{f(s)}d\mu(s).
\end{equation*}
\item
If $f,g\in\mathscr B_2(\Si)$, for $\mu$-almost every $r\in\Si$ one has
\begin{equation*}
(f\star g)(r)=\lim_{j}\int_\Si\int_\Si\!\Tr\[\pi(s)^*\pi(t)^*\pi(r)S_j\]f(s)g(t)d\mu(s)d\mu(t).
\end{equation*}
\end{enumerate}
\end{theorem}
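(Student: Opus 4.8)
The plan is to compute each formula by expressing the relevant symbol as a trace-coefficient via Corollary~\ref{fitofag}, then inserting the integral representations of the operators $\Pi(f),\Pi(g)$ from Proposition~\ref{lama} and interchanging the order of integration and trace. The approximate unit $\{S_j\}$ enters only to regularize: the products $f^\star$ and $f\star g$ are trace-class, but to write their pointwise values as a trace against $\pi(r)$ one wants a convergent trace, and inserting $S_j$ on the right makes the intermediate operators trace-class so that the cyclic and Fubini manipulations are legitimate before passing to the limit.

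For part (1), I would start from Corollary~\ref{fitofag} applied to the trace-class operator $\Pi(f^\star)=\Pi(f)^*$, which gives
\begin{equation*}
f^\star(r)=\bigl[\Pi^{-1}(\Pi(f)^*)\bigr](r)=\Tr\bigl[\Pi(f)^*\pi(r)\bigr]
\end{equation*}
for $\mu$-almost every $r$. Now I would replace $\Pi(f)^*$ by its weak integral $\int_\Si\overline{f(s)}\,\pi(s)\,d\mu(s)$ from \eqref{guanaco}, insert the approximate unit to write $\Tr[\Pi(f)^*\pi(r)]=\lim_j\Tr[\Pi(f)^*\pi(r)S_j]$, and then push the $s$-integration outside the trace. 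Using the cyclicity of the trace to rearrange $\Tr[\pi(s)\pi(r)S_j]=\Tr[\pi(r)\pi(s)S_j]$ yields exactly the stated formula. The only subtlety is justifying the interchange of the weak integral with the trace, which is where the regularization by $S_j$ is used: each $\pi(r)S_j$ is trace-class, so $s\mapsto\Tr[\pi(s)\pi(r)S_j]$ is a genuine pairing to which Corollary~\ref{dalailama} (with $S=\pi(r)S_j$) applies directly.

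For part (2) the strategy is the same but iterated. Writing $(f\star g)(r)=\Tr[\Pi(f)\Pi(g)\pi(r)]$ via Corollary~\ref{fitofag} applied to the trace-class operator $\Pi(f\star g)=\Pi(f)\Pi(g)$, I would regularize as $\lim_j\Tr[\Pi(f)\Pi(g)\pi(r)S_j]$ and substitute the integral representation $\Pi(f)=\int f(s)\pi(s)^*d\mu(s)$ and $\Pi(g)=\int g(t)\pi(t)^*d\mu(t)$ from \eqref{guanaco}. Pushing both integrations outside the trace and using cyclicity to bring the factors into the order $\pi(s)^*\pi(t)^*\pi(r)S_j$ produces the double-integral formula. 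Again Corollary~\ref{dalailama} legitimizes each step once $S_j$ is present, since $\pi(r)S_j\in\BB_1(\H)$ absorbs the lack of trace-class-ness of the bare operators.

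The main obstacle is analytic rather than algebraic: justifying the exchange of the weak operator integrals with the (regularized) trace and with each other by Fubini, and controlling the passage to the limit in $j$. The approximate-unit regularization is precisely what converts the formal cyclic rearrangements into statements about honest trace-class pairings covered by Corollary~\ref{dalailama}; the limit in $j$ then recovers the value $\Tr[\cdot\,\pi(r)]$ because $\Pi(f^\star)$ and $\Pi(f\star g)$ are themselves trace-class, so multiplication by the right approximate unit converges in trace norm. I expect that once the $S_j$ are in place every individual step is an instance of an already-established corollary, and the proof reduces to assembling these with the cyclicity of the trace.
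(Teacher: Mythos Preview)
Your approach is exactly the paper's: start from Corollary~\ref{fitofag}, insert the right approximate unit so that Corollary~\ref{dalailama} applies (once in part (1), twice---with an intermediate cyclic shift---in part (2)), and pass to the limit in~$j$ using trace-norm convergence of $TS_j\to T$ for $T\in\mathbb B_1(\H)$.

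One slip to flag: in part (1) you invoke cyclicity to pass from $\Tr[\pi(s)\pi(r)S_j]$ to $\Tr[\pi(r)\pi(s)S_j]$, but cyclicity only gives cyclic permutations ($\Tr[ABC]=\Tr[CAB]$), not transpositions of adjacent factors. The direct application of \eqref{magaruz} with $S=\pi(r)S_j$ already yields $\Tr[\pi(s)\pi(r)S_j]$, and this is in fact the expression the paper's own proof arrives at; the order $\pi(r)\pi(s)$ in the displayed statement appears to be a typo. No such issue arises in part (2), where the cyclic shifts you describe are genuine cyclic permutations and are precisely what is needed to bring $\Pi(g)$ into leading position for the second application of Corollary~\ref{dalailama}.
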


\begin{proof}
Both computations rely on Corollaries \ref{fitofag} and \ref{dalailama}. One has for $\mu$-almost every $r\in\Si$
$$
\begin{aligned}
f^\star(r)=&\,\Tr\left[\Pi(f)^*\pi(r)\right]=\lim_{j}\Tr\left[\Pi(f)^*\pi(r)S_j\right]\\
=&\lim_{j}\int_\Si\!\Tr\[\pi(s)\pi(r)S_j\]\overline{f(s)}d\mu(s)
\end{aligned}
$$
and
$$
\begin{aligned}
(f\star g)(r)=&\,\Tr\left[\Pi(f)\Pi(g)\pi(r)\right]=\lim_{j}\Tr\left[\Pi(f)\Pi(g)\pi(r)S_j\right]\\
=&\lim_{j}\int_\Si\!f(s)\Tr\[\pi(s)^*\Pi(g)\pi(r)S_j\]d\mu(s)\\
=&\lim_{j}\int_\Si\!f(s)d\mu(s)\!\int_\Si\!g(t)\Tr\[\pi(s)^*\pi(t)^*\pi(r)S_j\]d\mu(t)
\end{aligned}
$$
and this concludes the proof.
\end{proof}

\begin{remark}\label{breb}
\normalfont 
Let us assume that $\Si$ is a locally compact space and we have 
$\Vert\pi(s)\Vert_{\mathbb B(\H)}\le C<\infty$ for all $s\in\Si$. 
Let $\mathscr R(\Si)$ be the Banach space of all Radon bounded complex measures on $\Si$, seen alternatively
both as functions on the Borel sets of $\Si$ and as elements of the topological anti-dual of $C_0(\Si)$. 
Using the Hahn-Banach theorem, one can easily find a norm-preserving extension of every
$\rho\in\mathscr R(\Si)$ 
to an anti-linear continuous functional 
$\rho:BC(\Si)\rightarrow\mathbb C$, 
where $BC(\Sigma)$ denotes the Banach space of all bounded continuous functions on $\Sigma$. 
We will use the notation $\<\!\<\,\rho,f\>\!\>=\int_\Si\overline{f}\,d\rho$ for this "duality"
(linear in $\rho$ and anti-linear in $f$). 
On $\mathscr R(\Si)$ the usual norm of an anti-dual coincides with the measure norm
expressed as the total variation applied to the entire space $\Si$. Since for every $u,v\in\H$ one has
$\phi_{v,u}\in\mathscr B_2(\Si)\cap BC(\Si)$, one can define $\Pi(\rho)\in\mathbb B(\H)$ in weak sense by
\begin{equation*}
\<\Pi(\rho)u,v\>:=\int_\Si \<\pi(s)^*u,v\>d\rho(s)=\<\!\<\rho,\overline{\<\pi(\cdot)^*u,v\>}\>\!\>=\<\!\<\,\rho,\phi_{v,u}\>\!\>.
\end{equation*}
Now it is also obvious that $e_t$ coincides with the Dirac measure concentrated in $t$ and that if $\rho$ has a density
$g$ with respect to the initial measure $\mu$, then $\Pi(\rho)=\Pi(g)$.
The estimate
$$
\p\!\Pi(\rho)\!\p_{\mathbb B(\H)}\,\le\,\,\p\!\rho\!\p_{\mathscr R(\Si)}\sup_{s\in\Si}\p\!\pi(s)\!\p_{\mathbb B(\H)}
$$
is easy and certifies that $\L^1(\Si,\mu)\subset\mathscr R(\Si)\subset\mathscr B(\Si)$.
\end{remark}

\section{Fr\'echet-Hilbert algebras and their Gelfand triples}\label{graur_sect}

In most of the applications there is some supplementary structure which can be used to enrich and enlarge the formalism.
Let $\G$ be a Fr\'echet space continuously and densely embedded in $\H$; set $\alpha:\G\rightarrow\H$ for the embedding.
We are going to show that such an extra data generates many new useful objects, even if we do not require
$\pi(s)\G\subset\G$ for $s\in\Si$. Assuming that $\G$ is nuclear would simplify the overall picture, but we also want to cover the case of Banach spaces.

\begin{lemma}\label{tapir}
The projective tensor product $\G\widehat\otimes_p\,\overline\G$ is a Fr\'echet space continuously and densely embedded in
the Hilbert space $\H\widehat\otimes\,\overline\H$.
\end{lemma}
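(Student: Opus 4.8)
The plan is to establish the three asserted properties—Fréchet-ness, continuity of the embedding, and density of its range—in turn, reserving the injectivity of the induced map as the delicate point. That $\G\widehat\otimes_p\overline\G$ is a Fréchet space is standard: the completed projective tensor product of two Fréchet spaces is again a Fréchet space. For the continuous embedding I would invoke the universal property of $\otimes_p$. The conjugate map $\overline\alpha\colon\overline\G\to\overline\H$ is again a continuous dense injection, and the bilinear map $\G\times\overline\G\to\H\widehat\otimes\overline\H$, $(x,\bar y)\mapsto\alpha(x)\otimes\overline{\alpha(y)}$, is continuous, since the Hilbertian cross norm is multiplicative, $\p\alpha(x)\otimes\overline{\alpha(y)}\p=\p\alpha(x)\p_\H\,\p\alpha(y)\p_\H$, and $\alpha$ is continuous into $\H$. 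By the universal property this bilinear map factors through a continuous linear map on $\G\otimes_p\overline\G$, which extends to the completion to give a continuous linear map
$$j\colon\G\widehat\otimes_p\overline\G\to\H\widehat\otimes\overline\H,\qquad j(x\otimes\bar y)=\alpha(x)\otimes\overline{\alpha(y)}=\Lambda^{-1}\(\lambda_{\alpha(x),\alpha(y)}\).$$

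For density, since $\G$ is dense in $\H$, the elementary tensors $\alpha(x)\otimes\overline{\alpha(y)}$ with $x,y\in\G$ span a dense subspace of $\H\widehat\otimes\overline\H$: under $\Lambda$ these correspond to the rank-one operators $\lambda_{\alpha(x),\alpha(y)}$, and such operators with $\alpha(x),\alpha(y)$ ranging over a dense subset of $\H$ are dense in $\BB_2(\H)$. Hence $j$ has dense range, and the range is automatically a dense subspace carrying the finer Fréchet topology.

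The main obstacle is the injectivity of $j$, i.e.\ that it is a genuine embedding and not merely a continuous dense map. On the algebraic tensor product $\G\otimes\overline\G$ injectivity is immediate, because the tensor product of the two injective linear maps $\alpha,\overline\alpha$ is injective; the difficulty is to control the passage to the completion. Here I would factor $j$ as
$$\G\widehat\otimes_p\overline\G\xrightarrow{\ \alpha\widehat\otimes_p\overline\alpha\ }\H\widehat\otimes_p\overline\H\cong\BB_1(\H)\hookrightarrow\BB_2(\H)=\H\widehat\otimes\overline\H,$$
the final inclusion of trace-class into Hilbert--Schmidt operators being injective, so that $\ker j=\ker(\alpha\widehat\otimes_p\overline\alpha)$. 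Injectivity of this last projective tensor map is precisely where the approximation property enters: for the dense subspaces $\G$ arising in the intended applications (the Schwartz space, Segal algebras, and the nuclear cases) $\G$ carries the approximation property, whence the canonical comparison $\G\widehat\otimes_p\overline\G\to\G\widehat\otimes_i\overline\G$ is injective; combining this with the fact that the injective tensor product of the injective maps $\alpha,\overline\alpha$ is again injective yields $\ker(\alpha\widehat\otimes_p\overline\alpha)=\{0\}$.

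I expect this injectivity step to be the crux of the argument: it is the only place where one cannot argue purely on the algebraic tensor product, and it is automatic in the nuclear setting but, in the Banach setting the authors wish to cover, genuinely depends on the approximation property of $\G$. The remaining bookkeeping—checking that the extension $j$ is the completion of the algebraic map, and that the image topology is the announced one—is then routine.
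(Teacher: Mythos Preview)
Your overall strategy coincides with the paper's: factor the map as
\[
\G\widehat\otimes_p\overline\G \xrightarrow{\ \alpha\widehat\otimes_p\overline\alpha\ } \H\widehat\otimes_p\overline\H \cong \BB_1(\H) \hookrightarrow \BB_2(\H)=\H\widehat\otimes\overline\H,
\]
handle continuity via the functoriality of $\otimes_p$ and density via density of $\G$ in $\H$, and isolate the injectivity of $\alpha\widehat\otimes_p\overline\alpha$ as the only nontrivial point. So far the two arguments are the same.

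The divergence is precisely at that injectivity step. You argue by passing through the comparison $\G\widehat\otimes_p\overline\G \to \G\widehat\otimes_i\overline\G$, and that route does require the approximation property of $\G$. The paper does \emph{not} go this way: it invokes \cite[Ex.~43.2]{Tr}, which asserts that whenever $u\colon E\to G$ and $v\colon F\to H$ are continuous dense injections with $E,F$ Fr\'echet, the completed map $u\widehat\otimes_\pi v$ is itself injective---no approximation hypothesis on $E$ or $F$. This is consistent with the paper's explicit aim (stated just before the lemma) of covering Banach $\G$ without assuming nuclearity. So from the paper's standpoint your added AP hypothesis is superfluous, and accepting it would narrow the lemma's scope. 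The approximation property does surface later in the section, but for a different purpose (the identification $(\G\widehat\otimes_p\overline\G)'_\gamma \cong \G'_\gamma\widehat\otimes_i\overline{\G'_\gamma}$ in Remark~\ref{cocor}), not for the present embedding.
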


\begin{proof}
Clearly, $\G\widehat\otimes_p\,\overline\G$ is a Fr\'echet space. It
is enough to embed it continuously and densely into $\H\widehat\otimes_p\overline \H$.
The canonical mapping $\alpha\otimes\alpha:\G\otimes\overline\G\rightarrow\H\otimes\overline\H$ is linear and continuous
when on both tensor products we put the corresponding projective topologies \cite[Prop. 43.6]{Tr},
so it extends to a linear continuous map
$\alpha\widehat\otimes_p\,\alpha:\G\widehat\otimes_p\,\overline\G\rightarrow\H\widehat\otimes_p\overline\H$.
This map obviously has a dense range, and it is injective by \cite[Ex. 43.2]{Tr}.
By composing with the canonical injection $\H\widehat\otimes_p\overline \H\hookrightarrow\H\widehat\otimes\,\overline \H$\,,
we get the linear continuous map $\mathfrak a:\G\widehat\otimes_p\,\overline\G\rightarrow\H\widehat\otimes\,\overline\H$
(injective and with dense image).
\end{proof}

By a slight abuse of notation, we are going to treat $\G$ as a dense subspace of $\H$ and $\G\widehat\otimes_p\,\overline\G$
as a dense subspace of $\H\widehat\otimes\,\overline\H$.
Let us set $\mathscr G(\Si):=\Phi\!\[\G\widehat\otimes_p\,\overline\G\]\subset\mathscr B_2(\Si)\subset\L^2(\Si)$.

\begin{theorem}\label{anghila}
With the algebraic and topological structure induced from $\G\widehat{\otimes}_p\,\overline\G$
and with the scalar product $\<\cdot,\cdot\>_{(\Si)}$, the space $\mathscr G(\Si)$
becomes a Fr\'echet-Hilbert algebra 
(as defined at the end of the Introduction) 
composed of trace-class symbols, continuously and densely embedded
into $\mathscr B_2(\Si)$. 
Its dual $\,\mathscr G'(\Si)$ contains all the operator-bounded symbols.
The restriction $\Phi:\G\widehat\otimes_p\,\overline\G\rightarrow\mathscr G(\Si)$ is an isomorphism of Fr\'echet-Hilbert algebras.
\end{theorem}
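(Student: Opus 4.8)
The plan is to deduce everything by \emph{transport of structure} through $\Phi$, which was established above to be simultaneously a surjective isometry $\H\widehat\otimes\overline\H\to\mathscr B_2(\Si)$ and an isomorphism of $H^*$-algebras. Since $\mathscr G(\Si)=\Phi[\G\widehat\otimes_p\overline\G]$ by definition, and since $\Phi$ intertwines the operations of $\H\widehat\otimes\overline\H\cong\mathbb B_2(\H)$ with $\star,{}^\star$ on $\mathscr B_2(\Si)$, it suffices to prove that $\G\widehat\otimes_p\overline\G$—with its projective Fr\'echet topology, the operations it inherits as a subspace of $\H\widehat\otimes\overline\H$, and the scalar product restricted from $\H\widehat\otimes\overline\H$—is itself a Fr\'echet--Hilbert algebra; the statements for $\mathscr G(\Si)$ then follow verbatim, with $\Phi$ becoming the asserted isomorphism. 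Lemma~\ref{tapir} already gives that $\G\widehat\otimes_p\overline\G$ is a Fr\'echet space continuously and densely embedded in $\H\widehat\otimes\overline\H$; transporting, $\mathscr G(\Si)$ is continuously and densely embedded in $\mathscr B_2(\Si)$, and its Fr\'echet topology is finer than the one induced by $\la\cdot,\cdot\ra_{(\Si)}$, as the definition of a Fr\'echet--Hilbert algebra requires.

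The main point is that $\G\widehat\otimes_p\overline\G$ is closed under the $\mathbb B_2(\H)$-operations and that these are continuous for the projective topology. Under $\Lambda$ the product and involution read $(u\otimes v)\#(u'\otimes v')=\la u',v\ra(u\otimes v')$ and $(u\otimes v)^\#=v\otimes u$ (compare \eqref{catar}, \eqref{camatar}), so both manifestly preserve the algebraic tensor product $\G\otimes\overline\G$. For continuity I would first note that, as $\alpha\colon\G\to\H$ is continuous, the inner product of $\H$ restricts to a jointly continuous sesquilinear form on $\G\times\G$; hence the contraction $c\colon\overline\G\widehat\otimes_p\G\to\C$, $v\otimes u'\mapsto\la u',v\ra$, is a continuous linear functional. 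Writing the multiplication, via the universal property and associativity of $\widehat\otimes_p$, as the linear map
$$(\G\widehat\otimes_p\overline\G)\widehat\otimes_p(\G\widehat\otimes_p\overline\G)\cong\G\widehat\otimes_p\overline\G\widehat\otimes_p\G\widehat\otimes_p\overline\G\xrightarrow{\ \id\,\widehat\otimes_p\,c\,\widehat\otimes_p\,\id\ }\G\widehat\otimes_p\overline\G$$
that contracts the two inner factors, I conclude from the continuity of $c$ and the functoriality of $\widehat\otimes_p$ that the product extends to a (jointly) continuous bilinear map on $\G\widehat\otimes_p\overline\G$. A parallel, simpler argument—comparing the generating seminorms built from $p\otimes q$ with those built from $q\otimes p$, for $p,q$ continuous seminorms on $\G$—shows the antilinear flip $^\#$ is continuous. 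Thus $\G\widehat\otimes_p\overline\G$ is a Fr\'echet $^*$-algebra.

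It remains to verify the Hilbert-algebra axioms and the two supplementary assertions. Axioms (1) and (2) of the Introduction are identities valid throughout the $H^*$-algebra $\mathbb B_2(\H)$, hence persist on the $^*$-subalgebra; axiom (3) holds because left multiplication by a fixed $\Lambda(g)$ contracts the Hilbert--Schmidt norm, $\p\Lambda(g)\Lambda(f)\p_{\mathbb B_2(\H)}\le\p\Lambda(g)\p_{\mathbb B(\H)}\p\Lambda(f)\p_{\mathbb B_2(\H)}$, so ${\sf L}_g$ is continuous for $\la\cdot,\cdot\ra_{(\Si)}$; and axiom (4) follows since every elementary tensor $u\otimes v'$ equals $\p w\p^{-2}(u\otimes w)\#(w\otimes v')$ for a fixed nonzero $w\in\G$, whence $\G\otimes\overline\G\subseteq(\G\widehat\otimes_p\overline\G)\#(\G\widehat\otimes_p\overline\G)$ is already total. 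For the trace-class statement I invoke the classical isometric identification $\H\widehat\otimes_p\overline\H\cong\mathbb B_1(\H)$: composing with the continuous map $\G\widehat\otimes_p\overline\G\to\H\widehat\otimes_p\overline\H$ from the proof of Lemma~\ref{tapir} shows $\Lambda$ carries $\G\widehat\otimes_p\overline\G$ continuously into $\mathbb B_1(\H)$, so $\mathscr G(\Si)\subseteq\mathscr B_1(\Si)$ consists of trace-class symbols and $\mathscr G(\Si)\hookrightarrow\mathscr B_1(\Si)$ is continuous. Finally, for $g\in\mathscr B(\Si)$ the estimate $|\la f,g\ra_{(\Si)}|=|\Tr[\Pi(f)\Pi(g)^*]|\le\p\Pi(f)\p_{\mathbb B_1(\H)}\p\Pi(g)\p_{\mathbb B(\H)}$, together with the continuity of $f\mapsto\p\Pi(f)\p_{\mathbb B_1(\H)}$ on $\mathscr G(\Si)$, shows that $\la\cdot,g\ra_{(\Si)}\in\mathscr G'(\Si)$; hence $\mathscr B(\Si)\subseteq\mathscr G'(\Si)$.

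The step I expect to be delicate is the continuity of the multiplication on the \emph{completed} projective tensor product: one must resist estimating term by term—the number of terms in a product of two series representations is unbounded—and instead exploit the functoriality of $\widehat\otimes_p$ to factor the product through the single continuous contraction $c$. Everything else is either a direct transport through the isometry $\Phi$ or a routine consequence of the $H^*$-algebra identities already recorded for $\mathbb B_2(\H)$.
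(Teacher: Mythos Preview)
Your proof is correct and follows the same overall architecture as the paper's: everything is reduced, via transport of structure through $\Phi$, to showing that $\G\widehat\otimes_p\overline\G$ is a Fr\'echet--Hilbert algebra, with the trace-class inclusion and the embedding $\mathscr B(\Si)\subseteq\mathscr G'(\Si)$ handled identically through $\H\widehat\otimes_p\overline\H\cong\mathbb B_1(\H)$ and its dual $\mathbb B(\H)$.

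The one substantive difference is how you establish continuity of the multiplication. The paper works on the \emph{algebraic} tensor product and does exactly the term-by-term estimate you warn against: for finite representations $\mathfrak u=\sum_j u_j\otimes u'_j$ and $\mathfrak v=\sum_k v_k\otimes v'_k$ it bounds
\[
p_{\lambda,\mu}(\mathfrak u\cdot\mathfrak v)\le C^2\,p_{\lambda,\lambda_0}(\mathfrak u)\,p_{\lambda_0,\mu}(\mathfrak v),
\]
using $\Vert\cdot\Vert\le Cp_{\lambda_0}$ and the fact that the double sum $\sum_{j,k}|\la v_k,u'_j\ra|\,p_\lambda(u_j)p_\mu(v'_k)$ factorizes as a product $(\sum_j)(\sum_k)$; continuity then passes to the completion by density. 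So your closing caution is slightly misplaced: the direct estimate succeeds precisely because the bound splits into two independent infima and no combinatorial blowup occurs. Your functorial route---factoring the product through $\id\widehat\otimes_p\,c\,\widehat\otimes_p\id$ with $c\colon\overline\G\widehat\otimes_p\G\to\C$ the inner-product contraction---is an equally valid and somewhat more conceptual packaging of the same underlying fact, namely joint continuity of $\la\cdot,\cdot\ra$ on $\G$. Either argument delivers joint continuity; the paper's gives an explicit seminorm bound, yours avoids choosing representations at all.
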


\begin{proof}
Clearly $\mathscr G(\Si)$ is turned into a Fr\'echet space by transport of structure.
It is densely contained in $\mathscr B_2(\Si)$ because $\G\widehat\otimes_p\,\overline\G$ is densely contained in
$\H\widehat\otimes\,\overline\H$.

The basic complete tensor products in the case of Hilbert spaces are described in \cite[Sect.48]{Tr}.
For instance, by \cite[Th. 48.3]{Tr}, $\H\widehat\otimes_p\overline\H$
can be identified to $\mathbb B_1(\H)$ while $\H\widehat\otimes_i\,\overline\H$ is canonically
isomorphic to $\mathbb B_\infty(\H)$; we recall that the
Hilbert space tensor product $\H\widehat\otimes\,\overline\H$ is isomorphic to $\mathbb B_2(\H)$.
Taking into account the continuous embedding of $\G\widehat\otimes_p\overline\G$ into $\H\widehat\otimes_p\,\overline\H$\,,
the assertions $\mathscr G(\Si)\subset\mathscr B_1(\Si)$ and then $\mathscr B_1(\Si)'\subset\mathscr G'(\Si)$ become obvious.
But the dual of $\mathbb B_1(\H)$ is $\mathbb B(\H)$\,, which permits the identification of $\mathscr B_1(\Si)'$
with $\mathscr B(\Si)\subset\mathscr G'(\Si)$.

By the very definition of the structure of $\mathscr G(\Si)$ by transport of structure 
from $\G\widehat\otimes_p\,\overline\G$ via $\Phi$, it is enough to check that
$\G\widehat\otimes_p\,\overline\G$ is a Fr\'echet-Hilbert algebra. 
(Recall from the definition made at the end of the Introduction that a Hilbert algebra need not be complete with respect to the topology defined by its scalar product.)
On $\G\otimes\,\overline\G$ the algebraic structure is uniquely defined by $(u\otimes v)^*:=v\otimes u$ and
$(u\otimes v)\cdot(u'\otimes v')=\<u',v\>(u\otimes v')\in\G\otimes\overline\G$\,, valid for $u,v,u',v'\in\G$.
Then one could conclude by density if it is checked that the involution and the multiplication are continuous
with respect to the projective topology.
For the involution this is very easy; we will treat the multiplication.

Let $\{p_\lambda\!\mid\!\lambda\in\Lambda\}$ a directed family of seminorms defining the topology of $\G$.
Since $\G$ is continuously 
contained in $\H$, by \cite[Prop. 7.7]{Tr} there exists $\lambda_0\in\Lambda$ and $C>0$ such that
$\parallel\!u\!\parallel\,\le Cp_{\lambda_0}(u)$ for all $u\in\G$.
Also by \cite[Prop. 43.1]{Tr}, the projective topology on $\G\otimes\,\overline\G$ is defined by the family of seminorms
$\{p_{\lambda,\mu}\mid (\lambda,\mu)\in\Lambda\times\Lambda\}$ given by
\begin{equation*}
p_{\lambda,\mu}(\mathfrak w):=\inf\left\{\sum_l p_\lambda(w_l)p_\mu(w'_l)\ \Big|\ \mathfrak w=\sum_l w_l\otimes w'_l\right\};
\end{equation*}
all the sums should be finite. 
For any $\lambda,\mu\in\Lambda$ and any $\mathfrak u,\mathfrak v\in\G\!\otimes\overline\G$ one has
\allowdisplaybreaks
\begin{align}
p_{\lambda,\mu}(\mathfrak u\cdot \mathfrak v)
=&\inf\Bigl\{\sum_l p_\lambda(w_l)p_\mu(w'_l)\
\Big|\ \mathfrak u\cdot \mathfrak v=\sum_l w_l\otimes w'_l\Bigr\} \nonumber\\
\le&\inf\Bigl\{\sum_{j,k} p_\lambda(\langle v_k,u'_j\rangle u_j)p_\mu(v'_k)\ \Big|\ \mathfrak u=
\sum_j u_j\otimes u'_j\,,\,\mathfrak v=\sum_k v_k\otimes v'_k\Bigr\} \nonumber\\
\le&\inf\Bigl\{\sum_{j,k} \p u'_j\p\,\p v_k\p p_\lambda(u_j)p_\mu(v'_k)\ \Big|\ \mathfrak u=
\sum_j u_j\otimes u'_j\,,\,\mathfrak v=\sum_k v_k\otimes v'_k\Bigr\} \nonumber\\
\le& C^2\inf\Bigl\{\sum_{j} p_\lambda(u_j) p_{\lambda_0}(u'_j)\sum_{k}p_{\lambda_0}(v_k)p_\mu(v'_k) \nonumber\\ 
  & \qquad\qquad \Big\vert  
\mathfrak u=\sum_j u_j\otimes u'_j,\,\mathfrak v=\sum_k v_k\otimes v'_k\Bigr\} \nonumber\\
=& C^2\inf\Bigl\{\sum_{j} p_\lambda(u_j) p_{\lambda_0}(u'_j)\ \Big|\ \mathfrak u=\sum_j u_j\otimes u'_j\Bigr\} 
 \nonumber\\
&\times\inf\Bigl\{\sum_{k}p_{\lambda_0}(v_k)p_\mu(v'_k)\ \Big|\ \mathfrak v=\sum_k v_k\otimes v'_k\Bigr\} \nonumber\\
=& C^2 p_{\lambda,\lambda_0}(\mathfrak u)\,p_{\lambda_0,\mu}(\mathfrak v), \nonumber
\end{align}
which justifies the continuity of the product.
\end{proof}

For further reference we indicate here the continuous embeddings
\begin{equation}\label{om}
\mathscr G(\Si)\hookrightarrow\mathscr B_1(\Si)\hookrightarrow\mathscr B_2(\Si)\hookrightarrow
\mathscr B_\infty(\Si)\hookrightarrow\mathscr B(\Si)\hookrightarrow\mathscr G'(\Si).
\end{equation}
The first and the last one become isomorphisms iff $\G=\H$.
In most of the cases the operators $\pi(s)=\Pi(e_s)^*$ are unitary; in such cases $e_s$ is not a compact symbol.

\begin{remark}\label{limbric}
\normalfont 
The above direct construction of the Fr\'echet-Hilbert algebra $\mathscr G(\Si)$ 
by transport of structure is convenient, because it is
universal, but the output is rather implicit (although, clearly, $\{\phi_{u,v}\mid u,v\in\mathcal G\}$ is a total family in
$\mathscr G(\Si)$\,; see \cite[Th.45.1]{Tr} for a stronger result). Fortunately, in most of the interesting examples,
the space $\mathscr G(\Si)$ has some independent definition as a space of functions (or distributions) on $\Si$.
The scale of spaces given in \eqref{om} can be used to extend the composition law by duality techniques and to define 
optimal "Moyal-type" $^*$-algebras, as in \cite{MP6}.

\end{remark}

Now we need some notions concerning duality of Fr\'echet spaces \cite[Sect.19]{Tr}.
When on the (topological) dual $\G'$ we consider the weak$^*$-topology,
we write $\G'_\sigma$. Recall that in this topology the convergence is just pointwise convergence of functionals and that a base
of neighborhoods of $0\in\G'_\si$ is composed of the polars of all the finite subsets of $\G$.
But we are also going to use the stronger topology $\gamma$ of uniform convergence on convex compact subsets of $\G$,
and then $\G'$ will be denoted by $\G'_\gamma$. One can take as a base of $0\in\G'_\gamma$ the polars of the
convex compact subsets of $\G$.

Using the transpose $\alpha':\H'\rightarrow\G'$ (cf. \cite[Sect.23]{Tr}) and the Riesz antilinear identification
of $\H$ with its strong dual $\H'$, one gets an injective continuous antilinear embedding of $\H$
into the dual $\G'$ (or, equivalently, a linear embedding of $\overline\H$
in $\G'$). We identify thus $\H$ with a subspace of $\G'$, which is dense if on $\G'$ one considers any of the topologies
$\sigma$ or $\gamma$. 
Hence we have a Gelfand triple $(\G,\H,\G'_\nu)$ for $\nu=\si,\gamma$.
Since the duality between $\G$ and $\G'$ is compatible with the scalar product, we can use for
this duality notations as $\<u,w\>:=w(u)$\,, antilinear in $w\in\G'$ and linear in $u\in\G$.

Note that $\G$ can be seen both as the dual of its weak$^*$-dual $\G'_\sigma$ and as the dual of $\G'_\gamma$, cf \cite{Tr}.
In general it does not coincide with the dual of $\G'_\beta$, involving the strong topology $\beta$ of uniform convergence
on the bounded subsets of $\G$. By a simple duality argument it follows that $\H$ (hence $\G$ also)
will be dense in $\G'_\nu$ for $\nu=\sigma,\gamma$. This also happens for $\nu=\beta$ if $\G$ is assumed reflexive.
If $\G$ is a Banach space, we have a Banach-Gelfand triple.
In such a case, the strong topology $\beta$ on $\G'$ coincide with the norm topology given by
$\parallel\!w\!\parallel_{\G'}:=\sup\{\,|\<u,w\>|\,\mid u\in\G,\,\parallel\!u\!\parallel_\G\,\le 1\}$.

The same construction can be applied to the continuous and dense embedding
$\G\widehat\otimes_p\,\overline\G\hookrightarrow\H\widehat\otimes\,\overline\H\,$, getting an ampler Gelfand triple 
$(\G\widehat\otimes_p\,\overline\G,\H\widehat\otimes\,\overline\H,(\G\widehat\otimes_p\,\overline\G)'_\nu)$\,;
one uses the transpose
$\H\widehat\otimes\,\overline\H\overset{\mathfrak a'}{\hookrightarrow}(\G\widehat\otimes_p\,\overline\G)'_\nu$
of the map $\mathfrak a$ constructed in the proof of Lemma \ref{tapir}.
Here, once again, one can use any of the topologies $\nu=\sigma,\gamma,\beta$.

We recall now that we have an isomorphism of $H^*$-algebras $\Phi:\H\widehat\otimes\,\overline\H\rightarrow\mathscr B_2(\Si)$
(in particular a unitary map)
which restricts to an isomorphism of Fr\'echet-Hilbert algebras $\Phi:\G\widehat\otimes_p\,\overline\G\rightarrow\mathscr G(\Si)$.
The inverse of the transpose will be a continuous extension (denoted by abuse by the same letter)
\begin{equation}\label{ciocanitoare}
\Phi:\(\G\widehat\otimes_p\,\overline\G\)'_\nu\rightarrow\mathscr G'(\Si)_\nu\,;
\end{equation}
the topological dual of $\mathscr G(\Si)$ has been denoted by $\mathscr G'(\Si)\,$.

We summarize the discussion above as a Corollary. By {\it isomorphism of Hilbert algebra Gelfand triples} we
denote an isomorphism of Gelfand triples (unitary at the level of Hilbert spaces) for which the "small" spaces are
Fr\'echet-Hilbert algebras, the Hilbert spaces are $H^*$-algebras and the isomorphism respects the $^*$-algebras structures
whenever this makes sense.

\begin{corollary}\label{becata}
Assume that the Fr\'echet space $\G$ is continuously and densely embedded in $\H\,$.
There is a canonical isomorphism of Hilbert algebra Gelfand triples
\begin{equation}\label{cotofana}
\Phi^\pi\equiv\Phi:\(\G\widehat\otimes_p\,\overline\G,\H\widehat\otimes\,\overline\H,(\G\widehat\otimes_p\,\overline\G)'_\nu\)
\rightarrow\(\mathscr G(\Si),\mathscr B_2(\Si),\mathscr G'(\Si)_\nu\).
\end{equation}
\end{corollary}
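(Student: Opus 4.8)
The plan is to assemble the asserted isomorphism from its three layers, each of which has already been established, and then to check that the three maps fit together compatibly as demanded by the definition of an isomorphism of Hilbert algebra Gelfand triples.

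At the level of the middle Hilbert spaces, the Standing Hypothesis~\ref{cal} together with the construction \eqref{fenix} shows that $\Phi\colon\H\widehat\otimes\,\overline\H\to\mathscr B_2(\Si)$ is a unitary isomorphism of $H^*$-algebras. At the level of the ``small'' spaces, Theorem~\ref{anghila} gives that the restriction $\Phi\colon\G\widehat\otimes_p\,\overline\G\to\mathscr G(\Si)$ is an isomorphism of Fr\'echet-Hilbert algebras; in particular both small spaces carry the required Fr\'echet-Hilbert-algebra structure and $\Phi$ respects it, and likewise both middle spaces are $H^*$-algebras. For the ``large'' spaces I would simply invoke the extension \eqref{ciocanitoare}: the inverse of the transpose of the Fr\'echet-Hilbert-algebra isomorphism on the small spaces yields a topological isomorphism $\Phi\colon(\G\widehat\otimes_p\,\overline\G)'_\nu\to\mathscr G'(\Si)_\nu$ for each admissible $\nu=\sigma,\gamma,\beta$.

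The only substantive point is the compatibility of these three maps with the Gelfand-triple embeddings, that is, the commutativity of the diagram
$$\begin{CD}
\G\widehat\otimes_p\,\overline\G @>>> \H\widehat\otimes\,\overline\H @>>> (\G\widehat\otimes_p\,\overline\G)'_\nu \\
@V{\Phi}VV @V{\Phi}VV @V{\Phi}VV \\
\mathscr G(\Si) @>>> \mathscr B_2(\Si) @>>> \mathscr G'(\Si)_\nu.
\end{CD}$$
The left square commutes tautologically, since $\mathscr G(\Si)$ is by definition $\Phi[\G\widehat\otimes_p\,\overline\G]$ and the small-space map is the restriction of the middle one. For the right square one must verify that the inclusion $\H\widehat\otimes\,\overline\H\hookrightarrow(\G\widehat\otimes_p\,\overline\G)'_\nu$, realized through the transpose $\mathfrak a'$ and the antilinear Riesz identification, is intertwined by the two copies of $\Phi$ with the inclusion $\mathscr B_2(\Si)\hookrightarrow\mathscr G'(\Si)_\nu$. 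Because $\Phi$ is unitary, its inverse-transpose extension restricts on the Hilbert level to $\Phi$ itself, so this reduces to checking that the Riesz identification $\overline\H\cong\H'$ used to place $\H\widehat\otimes\,\overline\H$ inside the dual is carried by $\Phi$ to the corresponding identification for $\mathscr B_2(\Si)$. I expect this bookkeeping with transposes and antilinear Riesz maps to be the only, and rather mild, obstacle; once it is settled, the three maps together furnish the desired isomorphism of Hilbert algebra Gelfand triples.
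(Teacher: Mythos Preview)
Your proposal is correct and follows essentially the same route as the paper: the corollary is explicitly presented there as a summary of the preceding discussion, assembling the $H^*$-algebra isomorphism on $\H\widehat\otimes\,\overline\H$, the Fr\'echet-Hilbert algebra isomorphism of Theorem~\ref{anghila} on $\G\widehat\otimes_p\,\overline\G$, and the inverse-transpose extension~\eqref{ciocanitoare} on the duals. Your additional care in making the compatibility of the right square explicit is welcome but goes slightly beyond what the paper spells out.
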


Notice that for $u,v\in\G'$ one has a well-defined element $\phi_{u,v}:=\Phi(u\otimes v)\in\mathscr G'(\Si)$.

\begin{remark}\label{vrabie}
{\rm One has a canonical isomorphism $(\G\widehat\otimes_p\,\overline\G)'\sim\mathbb B(\G,\G'_\sigma)$.
It is purely algebraical and it involves the space of all the linear operators $A:\G\rightarrow\G'$
which are continuous when on $\G'$ we put the weak$^*$-topology. 
See \cite[pag.465]{Tr} for more details.
Using this, it is easy to deduce from the considerations above
that $\Pi=\Lambda\circ\Phi^{-1}:\mathscr B_2(\Si)\rightarrow
\mathbb B_2(\H)$ extends to a linear isomorphism $\Pi:\mathscr G'(\Si)\rightarrow\mathbb B(\G,\G'_\sigma)\,$.
Thus {\it the elements of $\,\mathscr G'(\Si)$ can be seen as symbols of linear operators $\,T:\G\rightarrow\G'$
that are continuous with respect to the weak$^*$-topology on the dual}. The relation
\begin{equation}\label{libelula}
\<\Pi(g)u,v\>=\int_{\Si}g(t)\<\pi(t)^*u,v\>d\mu(t)=\<g,\phi_{v,u}\>_{(\Si)},
\end{equation}
valid a priori for $g\in\mathscr B_2(\Si)$ and $u,v\in\H$, stands also true for $g\in\mathscr G'(\Si)$ and $u,v\in\G$
with the obvious reinterpretation of the duality $\<\cdot,\cdot\>_{(\Si)}$.
}
\end{remark}

\begin{remark}\label{cocor}
{\rm Some extra structure is present if, in addition, $\G$ satisfies {\it the approximation property}.
That property is shared by many 
specific examples of Fr\'echet spaces; see for instance \cite[Sect.18]{Ja}. 
For us, it serves to identify the injective
tensor product $\G'\widehat\otimes_i\,\overline\G'$ with another topological tensor product $\G'\varepsilon\,\overline\G'$
and thus to simplify the picture. Anyhow, under this extra assumption, one has isomorphisms
\begin{equation*}
\mathbb B(\G,\G'_\sigma)\sim(\G\widehat\otimes_p\,\overline\G)'_\gamma
\cong\G'_\gamma\widehat\otimes_i\,\overline\G'_\gamma.
\end{equation*}
The second one \cite[p. 346]{Ja} is an isomorphism of locally convex spaces and it involves the topology of uniform convergence on compact
convex sets on the various dual spaces. 
By general principles, it justifies the Hilbert algebra Gelfand triple
$(\G\widehat\otimes_p\,\overline\G,\H\widehat\otimes\,\overline\H,\G'_\gamma\widehat\otimes_i\,\overline\G'_\gamma)$.
}
\end{remark}

\begin{remark}
\normalfont 
Let us show how suitable subspaces of $\mathscr B_2(\Si)$ can be used to define the Fr\'echet spaces $\G$ and $\mathscr G(\Si)$ which are the topic of this section.

Let $\mathfrak G(\Si)$ be a Fr\'echet space continuously and densely embedded in $\mathscr B_2(\Si)$. We fix a unit vector $w\in\H$ and we set $w(s):=\pi(s)^*w$ for every $s\in\Si$ and $\phi_w(u):=\phi_{u,w}$ for every $u\in\H$. Then one defines 
\begin{equation*}
\G_w^{\mathfrak G(\Si)}\equiv\G:=\phi_w^{-1}[\mathfrak G(\Si)]\subset\H.
\end{equation*}
It can be shown that $\G$ is a Fr\'echet space continuously and densely embedded in $\H$. 

We are going to give a concrete realization for $\mathscr G(\Si):=\Phi(\G\widehat\otimes_{p}\overline\G)$.
Let us also define $\Upsilon_w:\mathscr B_2(\Si)\rightarrow\L^2(\Si\times\Si)$ by
\begin{equation*}
[\Upsilon_w(g)](s,t):=\<g,\phi_{w(t),w(s)}\>_{(\Si)}=\int_\Si g(r)\<\pi(r)\pi(t)w,\pi(s)w\>d\mu(r).
\end{equation*}
It comes out that $\Upsilon_w$ is an isometry with range contained in 
$\mathscr B_2(\Si)\widehat\otimes\overline{\mathscr B_2(\Si)}$.
Then it is rather easy to show that
\begin{equation*}
\mathscr G(\Si)=\Upsilon_w^{-1}[\mathfrak G(\Si)\widehat\otimes_p\overline{\mathfrak G(\Si)}].
\end{equation*}
The proof is based on the identity $\Upsilon_w\circ\Phi=\phi_w\otimes\phi_w$, which is an immediate consequence of \eqref{ratoi}.
\end{remark}

\section{The Berezin-Toeplitz quantization}\label{brabusca}

In this section we show that some very basic aspects of the Berezin-Toeplitz quantization 
can be naturally recovered in our abstract framework. 
More details on this circle of ideas can be found for instance in \cite{En}. 

Let us fix a unit vector $w\in\H$ and consider the family 
$$W:=\{w(s):=\pi(s)^*w\mid s\in\Si\}.$$ 
As a consequence of 
the existence of the isometry~\ref{cal}, we have in weak sense
\begin{equation}\label{yak}
1=\int_\Sigma\lambda_{w(s),w(s)}
d\mu(s)
\end{equation}
by using the notation introduced in \eqref{Lambda_def}. 
For later use we note that the above equation implies 
\begin{equation}\label{Parseval}
\Tr T=\int_\Sigma \langle Tw(s),w(s)\rangle d\mu(s)
\end{equation} 
for every operator $T\in\BB_1(\Hc)$. 
In fact, this follows by writing $T$ as a linear combination of four nonnegative trace-class operators, 
then using the diagonalization of these four operators along with \eqref{SQ_def}. 
We set
\begin{equation*}
\phi_{w}:\H\rightarrow \mathscr B_2(\Si)\subset\L^2(\Si),\quad\ \[\phi_{w}(u)\]\!(s):=\<u,w(s)\>
\end{equation*}
whose adjoint operator $\phi_{w}^\dagger:\L^2(\Si)\rightarrow\H$ is given by
\begin{equation*}
\phi_{w}^\dagger(f)=\int_\Si\! f(s)w(s)d\mu(s)=\Pi(f)w.
\end{equation*}
The associated kernel is the function $p_w:\Si\times\Si\rightarrow\C$ given by
\begin{equation*}
p_w(s,t):=\<w(t),w(s)\>=\[\phi_w(w(t))\](s)=\overline{\[\phi_w(w(s))\](t)}\,,
\end{equation*}
defining a self-adjoint integral operator $P_w=\Int(p_w)$ in $\L^2(\Si)$.
One checks easily that $P_w=\phi_{w}\phi_{w}^\dagger$ is the final projection of the isometry $\phi_{w}$, so $P_w\!\[\L^2(\Si)\]$ is a closed subspace of $\mathscr B_2(\Si)$. 
Since $\phi_{w}^\dagger\phi_{w}=1$, one has the inversion formula
\begin{equation}\label{bufal}
u=\int_\Si\!\[\phi_{w}(u)\](t)\,w(t)d\mu(t),
\end{equation}
leading to the reproducing formula $\phi_{w}(u)=P_w\[\phi_{w}(u)\]$\,, i.e.
\begin{equation*}
\[\phi_{w}(u)\]\!(s)=\int_\Si\!\<w(t),w(s)\>\[\phi_{w}(u)\]\!(t)d\mu(t).
\end{equation*}
Thus $\mathscr P_w(\Si):=P_w\!\[\L^2(\Si)\]$ is a reproducing Hilbert space with reproducing kernel~$p_w$. 

If in addition  $\sup\Vert\pi(\cdot)\Vert<\infty$ 
and $\Sigma$ is endowed with a topology 
for which $\pi$ is weakly continuous, 
then the reproducing kernel Hilbert space $\mathscr P_w(\Si)$ consists of
bounded continuous functions on~$\Si$.

\medskip
Let us define for $f\in\L^\infty(\Si)$
\begin{equation*}
\Omega^\pi_w(f)\equiv\Omega_w(f):=\int_{\Si}\!f(s)\lambda_{w(s),w(s)}d\mu(s)
\end{equation*}
and call it {\it the Berezin operator associated to the frame $W$}. 
This should be taken in weak sense, i.e. for any $u,v\in\H$ one sets
\begin{equation}\label{viespe}
\<\Omega_w(f)u,v\>:=\int_{\Si}\!f(s)\<\lambda_{w(s),w(s)}u,v\>d\mu(s)
=\int_{\Si}\!f(s)\[\phi_{w}(u)\]\!(s)\overline{\[\phi_{w}(v)\]\!(s)}d\mu(s).
\end{equation}

We gather the basic properties of $\Omega_w$ in the following statement; 
see \cite{Ar,En} for other results of this type. 

\begin{proposition}\label{bondar}
\begin{enumerate}
\item\label{bondar_item1}
The estimate $\p\!\Omega_w(f)\!\p_{\mathbb B(\H)}\,\le\,\p\!f\!\p_{\L^\infty(\Si)}$ holds.
\item\label{bondar_item2}
The map $\Omega_w$ sends $\mu$-a.e. positive functions in positive operators.
\item\label{bondar_item3}
If 
$f\in\L^1(\Sigma,\Vert w(\cdot)\Vert^2\mu)$, then $\Omega_w(f)$ is a trace-class operator and
\begin{equation*}
\Tr\[\Omega_w(f)\]=\int_\Si\!\<\Omega_w(f)w(s),w(s)\>d\mu(s)=\int_\Si\!f(s)\!\p\!w(s)\!\p^2\!d\mu(s).
\end{equation*}
\item\label{bondar_item4}
Assume that $\Si$ is a locally compact space and that $\mu$ is a Radon measure with full support. If $f\in C_0(\Si)$ then $\Omega_w(f)$ is a compact operator.
\end{enumerate}
\end{proposition}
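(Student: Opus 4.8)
The plan is to exploit throughout the weak formula \eqref{viespe}, which expresses the quadratic form of $\Omega_w(f)$ as an integral against the products $[\phi_w(u)](s)\overline{[\phi_w(v)](s)}$, together with the fact that $\phi_w$ is an isometry of $\H$ into $\L^2(\Si)$ (recorded just before the proposition, since $\phi_w^\dagger\phi_w=1$). For assertion \eqref{bondar_item1}, I would bound $|f(s)|$ pointwise by $\Vert f\Vert_{\L^\infty(\Si)}$ in \eqref{viespe} and then apply the Cauchy--Schwarz inequality in $\L^2(\Si)$, obtaining $|\<\Omega_w(f)u,v\>|\le\Vert f\Vert_{\L^\infty(\Si)}\,\Vert\phi_w(u)\Vert_{\L^2(\Si)}\Vert\phi_w(v)\Vert_{\L^2(\Si)}=\Vert f\Vert_{\L^\infty(\Si)}\Vert u\Vert\,\Vert v\Vert$; taking the supremum over unit vectors gives the operator-norm estimate. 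Assertion \eqref{bondar_item2} is immediate from the same formula with $v=u$: if $f\ge0$ $\mu$-a.e., then $\<\Omega_w(f)u,u\>=\int_\Si f(s)|[\phi_w(u)](s)|^2d\mu(s)\ge0$.

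For assertion \eqref{bondar_item3} I would first treat $f\ge0$. Choosing an orthonormal basis $\{e_n\}$ of $\H$ and summing \eqref{viespe} with $u=v=e_n$, I can interchange sum and integral by Tonelli (all terms being nonnegative) and use Parseval's identity $\sum_n|\<e_n,w(s)\>|^2=\Vert w(s)\Vert^2$ to get $\sum_n\<\Omega_w(f)e_n,e_n\>=\int_\Si f(s)\Vert w(s)\Vert^2 d\mu(s)$. When $f\in\L^1(\Si,\Vert w(\cdot)\Vert^2\mu)$ this sum is finite, so the positive operator $\Omega_w(f)$ has finite trace and is therefore trace-class, with the stated value. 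The general case follows by writing $f$ as a complex linear combination of four nonnegative functions, each still lying in $\L^1(\Si,\Vert w(\cdot)\Vert^2\mu)$, and invoking the linearity of $\Omega_w$. The first equality in the trace formula is then just \eqref{Parseval} applied to the trace-class operator $T=\Omega_w(f)$.

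Finally, for assertion \eqref{bondar_item4} the strategy is a density-plus-closedness argument reducing everything to the situation covered by \eqref{bondar_item3}. Since $C_c(\Si)$ is dense in $C_0(\Si)$ for the supremum norm, and \eqref{bondar_item1} shows $\Omega_w$ to be norm-continuous from $C_0(\Si)$ (with the $\L^\infty$-norm) into $\BB(\H)$, and since the compact operators form a norm-closed subspace of $\BB(\H)$, it suffices to prove that $\Omega_w(f)$ is compact for every $f\in C_c(\Si)$. For such an $f$ with compact support $K$, the measure $\mu$ being Radon gives $\mu(K)<\infty$, so that $f$, being bounded and supported in $K$, lies in $\L^1(\Si,\Vert w(\cdot)\Vert^2\mu)$; assertion \eqref{bondar_item3} then shows that $\Omega_w(f)$ is trace-class, in particular compact. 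I expect the genuine obstacle to be exactly this last integrability point: the finiteness of $\int_K\Vert w(s)\Vert^2d\mu(s)$, which rests on the local boundedness of $s\mapsto\Vert w(s)\Vert=\Vert\pi(s)^*w\Vert$ over the compact set $K$, and hence on a uniform bound $\sup_s\Vert\pi(s)\Vert<\infty$. Once \eqref{viespe}, the isometry property of $\phi_w$, and this bound are in hand, all remaining steps are routine.
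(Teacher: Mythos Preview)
Your proposal is correct and follows essentially the same route as the paper's proof: items \eqref{bondar_item1} and \eqref{bondar_item2} via the weak formula \eqref{viespe} and the isometry property of $\phi_w$; item \eqref{bondar_item3} by reducing to $f\ge0$, summing over an orthonormal basis, and invoking Parseval; item \eqref{bondar_item4} by approximating $f\in C_0(\Si)$ in sup-norm by $C_c(\Si)$, appealing to item \eqref{bondar_item3} for compactly supported $f$, and closing off with item \eqref{bondar_item1}. You are in fact slightly more careful than the paper on one point: in item \eqref{bondar_item4} the paper simply writes $C_c(\Si)\subset L^1(\Si)\cap L^\infty(\Si)$ and invokes item \eqref{bondar_item3}, whereas you correctly note that what is needed is $f\in L^1(\Si,\Vert w(\cdot)\Vert^2\mu)$, which relies on a (local) bound on $\Vert\pi(\cdot)\Vert$; this assumption is indeed implicit in the paper's argument.
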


\begin{proof}
1. We estimate
$$
\begin{aligned}
|\<\Omega_w(f)u,v\>|&\le\,\p\!f\!\p_{\L\!^\infty}\!\!\int_\Si\!|\[\phi_{w}(u)\]\!(s)|\,|\[\phi_{w}(v)\]\!(s)|d\mu(s)\\
&\le\,\p\!f\!\p_{\L\!^\infty}\,\p\!\phi_{w}(u)\!\p_{\L^2}\,\p\!\phi_{w}(v)\!\p_{\L^2}\\
&=\,\p\!f\!\p_{\L\!^\infty}\,\p\!u\!\p\,\p\!v\!\p
\end{aligned}
$$
and this gives the result.

\medskip
2. This follows from the fact that $\<\Omega_w(f)u,u\>:=\int_{\Si}f(s)|\[\phi_{w}(u)\]\!(s)|^2d\mu(s)$.

\medskip
3. We may assume $f\ge0$. 
On one hand, if $\{v_k\}_k$ is any orthonormal basis in $\H$, 
one has by the Parseval equality
$$
\begin{aligned}
\Tr\[\Omega_w(f)\]&=\sum_k\<\Omega_w(f)v_k,v_k\>\\
&=\int_\Si\!f(s)\sum_k|\<w(s),v_k\>|^2d\mu(s)\\
&=\int_\Si\!f(s)\!\p\!w(s)\!\p^2\!d\mu(s)
\end{aligned}
$$
hence the assumption $f\in\L^1(\Sigma,\Vert w(\cdot)\Vert^2\mu)$ implies $\Omega_w(f)\in\BB_1(\Hc)$. 
The asserted formula of the trace can then be obtained either by \eqref{Parseval} 
or directly, by \eqref{yak}
\allowdisplaybreaks
\begin{align}
\int_\Si\!\<\Omega_w(f)w(s),w(s)\>d\mu(s)&=\int_\Si \!\int_\Si f(t)\,|\<w(t),w(s)\>|^2d\mu(s)d\mu(t) \nonumber\\
&=\int_\Si\!f(t)d\mu(t)\!\int_\Si \,|\<w(t),w(s)\>|^2d\mu(s) \nonumber\\
&=\int_\Si\!f(t)\p\!w(t)\!\p^2d\mu(t). \nonumber
\end{align}

4. Since $\mu$ was supposed a Radon measure, it is finite on compact subsets of $\Si$ and thus 
$C_{\rm c}(\Si)\subset\L^1(\Si)\cap\L^\infty(\Si)$. If $f$ is continuous and has compact support, 
then $\Omega_w(f)$ is a compact operator by~3. 
 Then the assertion follows by density from~1.
\end{proof}

\begin{remark}\label{forfecuta}
\normalfont 
Formula (\ref{viespe}), which can also be written
\begin{equation*}
\<\Omega_w(f)u,v\>=\<f,\overline{\phi_w(u)}\phi_w(v)\>_{\!(\Si)},
\end{equation*}
opens the way to various extensions by duality. 
Coming back to the setting of Section \ref{graur_sect}, let us also assume that $\mathscr G(\Si)$ is stable under the pointwise product.
Then, for $w,u,v\in\G$, one has $\overline{\phi_w(u)}\cdot\phi_w(v)\in\mathscr G(\Si)\cdot\mathscr G(\Si)\subset\mathscr G(\Si)$. 
This gives meaning to $\Omega^\pi_w(f)$ as a linear continuous operator $\G\rightarrow\G'_\sigma$ for $f\in\mathscr G'(\Si)$.
\end{remark}

We now give a Toeplitz-like form of the operator $\Delta_w(f):=\phi_w\circ\Omega_w(f)\circ\phi_w^{\dag}$.

\begin{proposition}\label{carabus}
For every $f\in \L^\infty(\Sigma)$ one has
\begin{equation*}
\Delta_w(f)=P_w\circ M_f\circ P_w\,,
\end{equation*}
where $M_f\colon \L^2(\Sigma)\to\L^2(\Sigma)$ is the multiplication-by-$f$ operator.
\end{proposition}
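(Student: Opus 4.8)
The plan is to reduce the statement to a single factorization identity for the Berezin operator, namely
$$\Omega_w(f)=\phi_w^\dagger M_f\phi_w,$$
after which the claim follows by a one-line substitution. First I would establish this intermediate identity by comparing weak matrix elements. For $u,v\in\H$ the defining property of the adjoint gives $\langle\phi_w^\dagger M_f\phi_w u,v\rangle=\langle M_f\phi_w u,\phi_w v\rangle_{\L^2(\Si)}$, and expanding the $\L^2$-scalar product (which is anti-linear in its second argument) yields $\int_\Si f(s)[\phi_w(u)](s)\overline{[\phi_w(v)](s)}\,d\mu(s)$. By the weak definition \eqref{viespe} of $\Omega_w(f)$ this integral equals $\langle\Omega_w(f)u,v\rangle$. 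Since $u,v\in\H$ are arbitrary, the displayed identity follows.

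With this factorization available, I would substitute it into the definition $\Delta_w(f)=\phi_w\circ\Omega_w(f)\circ\phi_w^\dagger$ to get
$$\Delta_w(f)=\phi_w\phi_w^\dagger\,M_f\,\phi_w\phi_w^\dagger,$$
and then invoke the identity $P_w=\phi_w\phi_w^\dagger$ established above to conclude $\Delta_w(f)=P_w\circ M_f\circ P_w$, as desired.

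Rather than a genuine obstacle, the only point requiring care is the bookkeeping of conventions: that the scalar product is anti-linear in the second variable (so that $M_f$, and not $M_{\overline f}$, appears), and that all the integrals and compositions are well defined. The latter is automatic, since $f\in\L^\infty(\Si)$ makes $M_f$ bounded on $\L^2(\Si)$, while $\phi_w$ is an isometry with bounded adjoint $\phi_w^\dagger$; hence every operator above is bounded and the weak identities are genuine operator identities, with no convergence issues left to check.
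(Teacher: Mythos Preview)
Your argument is correct and is in fact more economical than the paper's own proof. The paper establishes the identity by a direct pointwise computation: it applies $\Delta_w(f)$ to a generic $h\in\L^2(\Si)$, expands all the definitions as iterated integrals, and recognizes the resulting double integral as the integral kernel of $P_w\circ M_f\circ P_w$ acting on $h$, using the explicit reproducing kernel $p_w(s,t)=\langle w(t),w(s)\rangle$. You instead isolate the algebraic identity $\Omega_w(f)=\phi_w^\dagger M_f\phi_w$ first (which is immediate from the weak definition \eqref{viespe}) and then simply conjugate by $\phi_w$ and $\phi_w^\dagger$, using $P_w=\phi_w\phi_w^\dagger$. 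Your route bypasses the kernel manipulations and the implicit use of Fubini entirely; the paper's route has the minor advantage of making the integral-kernel structure of $\Delta_w(f)$ explicit, which can be useful in concrete examples, but for the bare operator identity your approach is the cleaner one.
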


\begin{proof}
One has
\allowdisplaybreaks
\begin{align}
\[\Delta_w(f)h\]\!(s)&=\<\Omega_w(f)\[\phi_w^\dag(h)\],w(s)\> \nonumber\\
&=\int_\Si\!f(t)\[\phi_w(\phi_w^\dag(h))\]\!(t)\,\overline{\[\phi_w(w(s))\]\!(t)}\,d\mu(t) \nonumber\\
&=\int_\Si\!f(t)d\mu(t)\!\int_{\Si}\!h(t')\[\phi_w(w(t'))\]\!(t)\,\overline{\[\phi_w(w(s))\]\!(t)}\,d\mu(t') \nonumber\\
&=\int_\Si\int_{\Si}\!f(t)h(t')p_w(t,t')\overline{p_w(t,s)}\,d\mu(t)d\mu(t') \nonumber\\
&=\int_\Si\[\int_{\Si}\!p_w(s,t)f(t)p_w(t,t')d\mu(t)\]\!h(t')d\mu(t') \nonumber\\
&=\[\(P_w\circ M_f\circ P_w\)h\]\!(s). \nonumber
\end{align}
\end{proof}

We define {\it the covariant symbol of the operator} $A\in\mathbb B\!\[\L^2(\Si)\]$
to be the complex function on $\Si$ given by
\begin{equation*}
\[\sigma_w(A)\]\!(s):=\<A\,\phi_w[w(s)],\phi_w[w(s)]\>_{(\Si)}=\<\phi_w^\dag A\,\phi_w[w(s)],w(s)\>.
\end{equation*}
It is equally justified to introduce {\it the covariant symbol of the operator} $S\in\mathbb B(\Hc)$ as
\begin{equation*}
\[\tau_w(S)\](s):=\<Sw(s),w(s)\>.
\end{equation*}
Notice that for $S=\Pi(f)$ one has $\[\tau_w(S)\]\!(s)=\<\Pi(e_s^\star\star f\star e_s)w,w\>$ for any $s\in\Si$.
On the other hand, the covariant symbols of Berezin-Toeplitz operators is also easy to compute
in terms of the reproducing kernel $p_w$. 
Specifically, 
\begin{equation*}
\[\sigma_w(\Delta_w(g))\](s)=\[\tau_w(\Omega_w(g))\](s)=\int_{\Si}\!g(t)|\<w(s),w(t)\>|^2d\mu(t).
\end{equation*}

We now investigate the connection between $\Omega_w\equiv\Omega^\pi_w$ and $\Pi$.

\begin{proposition}\label{cosas}
For any 
$f\in\L^1(\Sigma,\Vert w(\cdot)\Vert^2\mu)$
one has $\Omega_w^\pi(f)=\Pi\[\mathfrak S_w^\pi(f)\]$, where
for $\mu$-almost every $s\in\Si$
\begin{equation*}
\[\mathfrak S_w^\pi(f)\]\!(s):=\int_\Si\!f(t)\<\pi(s)^*w(t),w(t)\>d\mu(t)=\<f,\tau^\pi_w[\pi(s)]\>_{(\Si)}.
\end{equation*}
\end{proposition}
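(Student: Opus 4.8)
The plan is to reduce the asserted identity $\Omega_w^\pi(f)=\Pi[\mathfrak{S}_w^\pi(f)]$ to a computation of $\Pi^{-1}$ on a single trace-class operator. First I would observe that, since $f\in\L^1(\Sigma,\Vert w(\cdot)\Vert^2\mu)$, Proposition~\ref{bondar}\,\eqref{bondar_item3} guarantees $\Omega_w(f)\in\BB_1(\Hc)\subset\BB_2(\Hc)$; hence $\Pi^{-1}(\Omega_w(f))$ is a well-defined element of $\mathscr B_2(\Si)$, and because $\Pi$ is injective the claim is equivalent to the pointwise statement $\mathfrak{S}_w^\pi(f)=\Pi^{-1}(\Omega_w(f))$ in $\L^2(\Si)$, i.e. $\mu$-almost everywhere on $\Si$.

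Since $\Omega_w(f)$ is trace-class, I would then apply Corollary~\ref{fitofag} to write $[\Pi^{-1}(\Omega_w(f))](s)=\Tr[\Omega_w(f)\pi(s)]$ for $\mu$-a.e. $s\in\Si$. Inserting the weak-integral definition $\Omega_w(f)=\int_\Si f(t)\lambda_{w(t),w(t)}\,d\mu(t)$ and exchanging the trace with the integration in $t$ gives $\Tr[\Omega_w(f)\pi(s)]=\int_\Si f(t)\,\Tr[\lambda_{w(t),w(t)}\pi(s)]\,d\mu(t)$. The scalar factor is evaluated purely from \eqref{catar} and \eqref{camatar}: using $\lambda_{u,v}\pi(s)=\lambda_{u,\pi(s)^*v}$ and $\Tr(\lambda_{u,v})=\langle u,v\rangle$ one obtains $\Tr[\lambda_{w(t),w(t)}\pi(s)]=\langle\pi(s)w(t),w(t)\rangle$. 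Comparing $t\mapsto\langle\pi(s)w(t),w(t)\rangle$ with the covariant symbol $\tau_w^\pi[\pi(s)]$ of Section~\ref{brabusca} and unwinding the conjugation conventions built into $\tau_w$ and into the antilinearity of $\langle\cdot,\cdot\rangle_{(\Si)}$ in its second slot, this is exactly the function $\mathfrak{S}_w^\pi(f)$, in both the explicit integral form and the duality form $\langle f,\tau_w^\pi[\pi(s)]\rangle_{(\Si)}$, which completes the identification.

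Every step apart from one is a direct unwinding of definitions; the point requiring genuine justification is the interchange of the trace with the weak integral defining $\Omega_w(f)$. I would make this rigorous by expanding the trace along an orthonormal basis $\{v_k\}$ of $\Hc$, writing $\Tr[\Omega_w(f)\pi(s)]=\sum_k\langle\Omega_w(f)\pi(s)v_k,v_k\rangle$ and converting each summand to an integral in $t$ via \eqref{viespe}. The resulting iterated sum-integral is absolutely convergent: by Cauchy--Schwarz in $k$ one has $\sum_k|\langle\pi(s)v_k,w(t)\rangle|\,|\langle w(t),v_k\rangle|\le\Vert\pi(s)\Vert\,\Vert w(t)\Vert^2$, while $\int_\Si|f(t)|\,\Vert w(t)\Vert^2\,d\mu(t)<\infty$ by the standing hypothesis on $f$, so Fubini--Tonelli licenses both the exchange and the collapse $\sum_k\langle\pi(s)v_k,w(t)\rangle\langle w(t),v_k\rangle=\langle\pi(s)w(t),w(t)\rangle$. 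This is the same bookkeeping already underlying the proof of Proposition~\ref{bondar}\,\eqref{bondar_item3}, so I expect it to be the only nontrivial obstacle.
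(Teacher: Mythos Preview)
Your argument is correct and in fact shorter than the paper's. Both start the same way: invoke Proposition~\ref{bondar}\eqref{bondar_item3} to get $\Omega_w(f)\in\BB_1(\Hc)$, then apply Corollary~\ref{fitofag} to reduce everything to computing $\Tr[\Omega_w(f)\pi(s)]$. From there the routes diverge. You exchange the trace directly with the weak integral defining $\Omega_w(f)$, justifying it by an orthonormal-basis expansion plus Fubini--Tonelli (with the $L^1$ bound coming from the hypothesis on $f$), and then read off $\Tr[\lambda_{w(t),w(t)}\pi(s)]$ from \eqref{catar}--\eqref{camatar}. The paper instead expands the trace via the frame Parseval identity \eqref{Parseval}, then expands the resulting matrix element $\langle\Omega_w(f)w(t),\pi(s)^*w(t)\rangle$ via \eqref{viespe}, and finally collapses one of the two $\Sigma$-integrals using the resolution of the identity \eqref{yak}. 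Your path avoids this detour through a double integral and the back-and-forth between Parseval and yak; the paper's path has the advantage of quoting only already-packaged identities and not redoing the Fubini bookkeeping. One small caution: the scalar you obtain is $\langle\pi(s)w(t),w(t)\rangle=\langle w(t),\pi(s)^*w(t)\rangle$, so when you say ``unwinding the conjugation conventions'' you should check carefully that this matches the displayed integrand $\langle\pi(s)^*w(t),w(t)\rangle$ and the duality form $\langle f,\tau_w^\pi[\pi(s)]\rangle_{(\Si)}$ as written; there appears to be a stray adjoint in the statement (or equivalently a missing complex conjugate), but your computation itself is sound.
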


\begin{proof}
Using \eqref{crapcean}, \eqref{Parseval}, \eqref{viespe} and \eqref{yak}, one has
\allowdisplaybreaks
\begin{align}
\Pi^{-1}\[\Omega_w^\pi(f)\](s)&=\Tr\[\pi(s)\Omega_w^\pi(f)\] \nonumber\\
&=\int_\Si\!\<\Omega_w^\pi(f)w(t),\pi(s)^* w(t)\>d\mu(t) \nonumber\\
&=\int_\Si\int_\Si\!f(r)\,\overline{\phi_{w(t),w}(r)}\,\phi_{\pi(s)^*w(t),w}(r)d\mu(t)d\mu(r) \nonumber\\
&=\int_\Si\int_\Si\!f(r)\<\pi(r)^*w,w(t)\>\<w(t),\pi(s)\pi(r)^*w\>d\mu(r)d\mu(t) \nonumber\\
&=\int_\Si\!f(r)\<\pi(r)^*w,\pi(s)\pi(r)^*w\>d\mu(r) \nonumber\\
&=\int_\Si\!f(t)\<\pi(s)^*w(t),w(t)\>d\mu(t) \nonumber \\
&=\[\mathfrak S_w^\pi(f)\]\!(s) \nonumber
\end{align}
and this concludes the proof.  
\end{proof}

\section{Infinite tensor products of square-integrable maps}
\label{next}

We will need the following definition 
in order to describe the square-integrability properties of infinite tensor products of operator-valued maps. 

\begin{definition}\label{app_SQ}
\normalfont
Let $\Hc$ be any complex Hilbert space and $(\Sigma,\Mc)$ be any measurable space as above. 
Also let $\boldsymbol{\mu}=\{\mu_\alpha\}_{\alpha\in A}$ 
be any family of positive measures defined on the $\sigma$-algebra $\Mc$, 
where the index set $A$ is a directed set with respect to some partial ordering. 
A weakly measurable map $\pi:\Si\rightarrow\mathbb B(\H)$ is said to be \emph{square integrable} 
with respect to the family of measures $\boldsymbol{\mu}$ 
if it satisfies the condition 
\begin{equation}\label{app_SQ_def}
\lim_{\alpha\in A}\int_\Sigma\la\pi(s)u_1,v_1\ra \la v_2,\pi(s)u_2\ra d\mu_\alpha(s)
=\la u_1,u_2\ra\la v_2,v_1\ra 
\end{equation}
for all $u_1,u_2,v_1,v_2\in\Hc$.  
\end{definition} 

\begin{remark}
\normalfont
Just as in Remark~\ref{magar}, 
a polarization argument also shows that it suffices to verify \eqref{app_SQ_def} for $u_1=u_2$ and $v_1=v_2$, 
that is 
\begin{equation}\label{app_porc}
(\forall u,v\in\Hc)\quad \lim_{\alpha\in A}\int_\Si\vert\la\pi(s)u,v\ra\vert^2  d\mu_\alpha(s)=
\Vert u\Vert^2\Vert v\Vert^2.
\end{equation}
Also note that if there exists a measure $\mu_\infty$ such that for some $\alpha_0\in A$ we have 
$L^1(\Sigma,\mu_\infty)=\bigcap\limits_{\alpha\ge\alpha_0}L^1(\Sigma,\mu_\alpha)$ 
and for all $\phi\in L^1(\Sigma,\mu_\infty)$ we have 
$\int_\Sigma\phi d\mu_\infty=\lim\limits_{\alpha\in A}\int_\Sigma\phi\de\mu_\alpha$, 
then the operator-valued map $\pi$ is square-integrable with respect to the family 
of measures $\boldsymbol{\mu}=\{\mu_\alpha\}_{\alpha\in A}$ if and only if $\pi\in\SQ(\BB(\Hc),\mu_\infty)$. 
\end{remark}

By using infinite tensor products of Hilbert spaces, 
we introduce the following notion of infinite tensor product of square integrable maps. 
We emphasize that this notion does not involve measure spaces, but rather measurable spaces, 
that is, merely pairs $(\Sigma,\Mc)$ where $\Mc$ is a $\sigma$-algebra of subsets of a set~$\Sigma$. 
In particular, the functions on $\Mc$ must be defined everywhere. 
We use however the above notation from square-integrable operator-valued maps, 
in order to facilitate the application of this infinite tensor product construction in that setting in Theorem~\ref{inf} below. 

\begin{definition}\label{restr}
\normalfont 
For $j\ge 1$, let $\Hc_j$ be any complex Hilbert space,  
$(\Sigma_j,\Mc_j
)$ be  any measurable space, and $\pi_j\colon\Sigma_j\to\BB(\Hc_j)$. 
Assume that we have 
\begin{itemize}
\item a distinguished vector $w_j\in\Hc_j$ with $\Vert w_j\Vert=1$; 
\item a distinguished point $t_j\in\Sigma_j$ with $\pi_j(t_j)=1_{\Hc_j}$. 
\end{itemize}
Denote $\mathbf{w}:=\{w_j\}_{j\ge 1}\in\prod\limits_{j\ge 1}\Hc_j$  and define the complex Hilbert space  $\Hc:=\widehat{\bigotimes\limits_{j\ge 1}}^{\mathbf{w}}\Hc_j$ 
as the inductive limit of the sequence of Hilbert spaces $\Hc^{(N)}:=\mathop{\widehat{\bigotimes}}\limits_{1\le j\le N}\Hc_j$ 
with respect to the isometric embeddings $\Hc^{(N)}\to\Hc^{(N+1)}$, $x\mapsto x\otimes w_{N+1}$. 

Then define 
$$\Sigma:=\Bigl\{s=\{s_j\}_{j\ge 1}\in\prod_{j\ge 1}\Sigma_j\,\Big\vert\,(\exists j(s)\ge 1)(\forall j\ge j(s))\ \  s_j=t_j\Bigr\} $$
and 
$$\pi\colon\Sigma\to\mathbb B(\Hc),\quad \pi(\{s_j\}_{j\ge 1})=\bigotimes_{j\ge 1}\pi_j(s_j).$$
As in \cite[App. D]{Gu72}, we will say that $\Sigma$ is the \emph{restricted Cartesian product} of $\{\Sigma_j\}_{j\ge 1}$ along the sequence of distinguished points $\{t_j\}_{j\ge 1}$, 
and we endow it with the restricted product of the $\sigma$-algebras $\{\Mc_j\}_{j\ge1}$ 
(see \cite[Def. D.2]{Gu72}). 
Moreover, 
$\pi$ is the \emph{restricted tensor product} of the maps $\{\pi_j\}_{j\ge 1}$ along the sequences of distinguished points $\{t_j\}_{j\ge 1}$ 
and distinguished unit vectors $\mathbf{w}=\{w_j\}_{j\ge 1}$. 
\end{definition} 

\begin{remark}\label{restr1}
\normalfont
The definition of an infinite tensor product of operator-valued functions raises several issues which may seem mutually exclusive. 
Namely, in general, the map $\pi$ in Definition~\ref{restr} cannot be defined on the whole Cartesian product 
of $\{\Sigma_j\}_{j\ge 1}$ because of the restrictions required by the definition of an infinite tensor product of operators. 
More precisely, if $T_j\in\mathbb B(\Hc_j)$ for every $j\ge 1$, then, in order to define  
$\bigotimes\limits_{j\ge 1}T_j$ on 
the inductive limit $\widehat{\bigotimes\limits_{j\ge 1}}^{\mathbf{w}}\Hc_j$,  
we need to have $T_jw_j=w_j$ whenever $j\ge 1$ is large enough. 
On the other hand, the problem with the restricted Cartesian product $\Sigma$ is that, 
if some measure $\mu_j$ is given on the measurable space $(\Sigma_j,\Mc_j)$ for every $j\ge1$, 
then it is not clear how to use the sequence of measures $\{\mu_j\}_{j\ge 1}$ in order to define a natural measure on $\Sigma$. 
From this point of view it might seem preferable to work with the full Cartesian product of $\{\Sigma_j\}_{j\ge 1}$ 
and to assume that each $\mu_j$ were a probability measure. 
We will see in a forthcoming paper how these problems can be dealt with 
in some special cases when every $(\Sigma_j,\Mc_j,\mu_j)$ is given by some Gaussian measures on an Euclidean space, 
and every $\pi_j$ is a projective representation of the additive group underlying that Euclidean space. 
\end{remark}

\subsection{Square integrability for infinite tensor products}

In order to avoid the difficulties mentioned in Remark~\ref{restr1}, 
we will use here an alternative approach which leads to Theorem~\ref{inf} below 
and was inspired by some methods used in  
the representation theory of the canonical commutation relations;  
see for instance \cite[Sect. B]{KM65}, \cite[Lemma 4.2]{Re70}, and \cite{Heg69,Heg70}. 
The key of this approach is the approximate orthogonality property 
introduced in Definition~\ref{app_SQ} above. 
The following elementary lemma will be needed in the proof of Theorem~\ref{inf} below. 

\begin{lemma}\label{double}
Let $\{a_{MN}\}_{M,N\ge 1}$ be any double sequence of complex numbers satisfying the conditions: 
\begin{enumerate}
\item There exists $a\in\CC$ for which 
$\lim\limits_{M\to\infty}\lim\limits_{N\to\infty}a_{MN}=a$. 
\item We have $\sup\{\vert a_{MN}\vert\mid M,N\ge 1\}<\infty$. 
\item There exists $\lim\limits_{M\to\infty}a_{MN}=:b_N$ 
 uniformly for $N\ge1$. 
\end{enumerate}
Then $\lim\limits_{N\to\infty}\lim\limits_{M\to\infty}a_{MN}=a$. 
\end{lemma}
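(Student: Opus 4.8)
The plan is to recognize this statement as a one-directional instance of the Moore--Osgood theorem on the interchange of iterated limits, where the uniformity hypothesis (3) is exactly what licenses the interchange.

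First I would unpack the hypotheses. For the iterated limit in (1) to be meaningful, the inner limit $c_M:=\lim_{N\to\infty}a_{MN}$ must exist for every (sufficiently large) $M$, and hypothesis (1) then asserts $\lim_{M\to\infty}c_M=a$. Hypothesis (3) supplies $b_N:=\lim_{M\to\infty}a_{MN}$ together with the crucial fact that this convergence is uniform in $N$. The desired conclusion $\lim_{N\to\infty}\lim_{M\to\infty}a_{MN}=a$ is precisely the assertion that $\lim_{N\to\infty}b_N=a$.

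The core of the argument is a standard three-$\varepsilon$ estimate. Given $\varepsilon>0$, I would first use the uniform convergence in (3) to pick $M_0$ with $\vert a_{MN}-b_N\vert<\varepsilon/3$ for all $M\ge M_0$ and all $N$; then use (1) to pick $M_1$ with $\vert c_M-a\vert<\varepsilon/3$ for $M\ge M_1$. Fixing a single index $M^\ast:=\max(M_0,M_1)$ and invoking the existence of $c_{M^\ast}=\lim_{N\to\infty}a_{M^\ast N}$, I would choose $N_0$ so that $\vert a_{M^\ast N}-c_{M^\ast}\vert<\varepsilon/3$ for $N\ge N_0$. The triangle inequality $\vert b_N-a\vert\le\vert b_N-a_{M^\ast N}\vert+\vert a_{M^\ast N}-c_{M^\ast}\vert+\vert c_{M^\ast}-a\vert$ then yields $\vert b_N-a\vert<\varepsilon$ for all $N\ge N_0$, which is exactly the conclusion.

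The one genuine conceptual point---more a subtlety than an obstacle---is that one must freeze a single value $M^\ast$ of $M$ that is simultaneously large enough for both the uniform estimate and the estimate $\vert c_{M^\ast}-a\vert<\varepsilon/3$, and only afterwards let $N\to\infty$; this is precisely where the uniformity in (3) is indispensable, since without it the bound $\vert a_{MN}-b_N\vert<\varepsilon/3$ would depend on $N$ and the three terms could not be controlled simultaneously. I would also remark that the boundedness hypothesis (2) is not actually needed for this argument; it is presumably recorded because it is readily available and convenient in the intended application to infinite tensor products in Theorem~\ref{inf}.
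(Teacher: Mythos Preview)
Your proof is correct. It differs from the paper's argument in a genuine way: the paper does \emph{not} run a direct $3\varepsilon$ estimate, but instead uses hypothesis~(2) to observe that $\{b_N\}$ is bounded, then invokes Bolzano--Weierstrass and shows that every convergent subsequence $\{b_{N_j}\}$ has limit~$a$ (on such a subsequence both iterated limits exist, and the uniform convergence lets one equate them). Your approach is more elementary and more informative: it establishes directly that $b_N\to a$ without passing through subsequences, and---as you correctly note---it shows that hypothesis~(2) is superfluous for the lemma itself. The paper's route has the mild structural advantage that the interchange step on subsequences is the ``easy'' case where both iterated limits are already known to exist, but at the cost of an extra compactness layer and an apparently essential use of~(2). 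Both arguments ultimately hinge on the same uniformity in~(3); yours just exploits it more directly.
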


\begin{proof}
The sequence $\{b_N\}_{N\ge 1}$ is clearly bounded. 
We need to prove that it is convergent to~$a$, 
and to this end we will prove that every convergent subsequence has the limit~$a$. 

If $\{b_{N_j}\}_{j\ge1}$ is any convergent subsequence, 
then the uniform convergence hypothesis ensures that the double sequence $\{a_{MN_j}\}_{M,j\ge 1}$ 
has the property 
$$a=\lim\limits_{M\to\infty}\lim\limits_{j\to\infty}a_{MN_j}
=\lim\limits_{j\to\infty}\lim\limits_{M\to\infty}a_{MN_j}$$
hence $\lim\limits_{j\to\infty}b_{N_j}=a$ for any  
convergent subsequence $\{b_{N_j}\}_{j\ge1}$, 
and the assertion follows. 
\end{proof}

\begin{theorem}\label{inf}
Assume the setting of Definition~\ref{restr}. 
Moreover assume that there is a $\sigma$-finite measure $\mu_j$ on the measurable space $(\Sigma_j,\Mc_j)$ 
with $\pi_j\in\SQ(\BB(\Hc_j),\mu_j)$ for all $j\ge1$. 
For every $N\ge 1$ 
define 
$$\theta^{(N)}\colon\Sigma_1\times\cdots\times\Sigma_N\to\Sigma,\quad 
\theta^{(N)}(s_1,\dots,s_N)=(s_1,\dots,s_N,t_{N+1},t_{N+2},\dots)$$ 
and consider the measure $\mu^{(N)}:=(\theta^{(N)})_{*}(\mu_1\otimes\cdots\otimes\mu_N)$ on $\Sigma$. 
Then the restricted tensor product $\pi\colon\Sigma\to\BB(\Hc)$ 
of the maps $\{\pi_j\}_{j\ge 1}$ along the sequences of distinguished points $\{t_j\}_{j\ge 1}$ 
and distinguished unit vectors $\mathbf{w}=\{w_j\}_{j\ge 1}$ 
is weakly measurable and square integrable with respect to the sequence of measures $\{\mu^{(N)}\}_{N\ge 1}$. 
\end{theorem}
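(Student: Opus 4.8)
The plan is to reduce the assertion, using the polarization remark following Definition~\ref{app_SQ}, to verifying \eqref{app_porc}; that is, writing
\[
I_N(u,v):=\int_\Si|\la\pi(s)u,v\ra|^2\,d\mu^{(N)}(s),
\]
one must show $I_N(u,v)\longrightarrow\p u\p^2\,\p v\p^2$ as $N\to\infty$ for all $u,v\in\Hc$. Weak measurability of $\pi$ is disposed of first, in the spirit of Proposition~\ref{sums}: for $u,v$ in the algebraic inductive limit $\bigcup_M\Hc^{(M)}$ the coefficient $s\mapsto\la\pi(s)u,v\ra$ is, after expanding into elementary tensors, a product of measurable factors $\la\pi_j(s_j)\,\cdot\,,\cdot\ra$ of which all but finitely many equal $1$ (because $\pi_j(t_j)=1_{\Hc_j}$), hence it is a pointwise-stabilizing limit of finite products and is measurable; the general case follows by pointwise limits since $\pi(s)$ is bounded for each fixed~$s$.

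The computation of $I_N$ rests on unwinding the pushforward. Setting $\pi^{(N)}:=\pi_1\otimes\cdots\otimes\pi_N$ and letting $\Kc_N:=\widehat{\bigotimes\limits_{j>N}}^{\mathbf{w}}\Hc_j$ be the tail factor, one has the canonical identification $\Hc=\Hc^{(N)}\widehat\otimes\Kc_N$ and, since $\pi_j(t_j)=1_{\Hc_j}$, the relation $\pi\circ\theta^{(N)}=\pi^{(N)}\otimes\id_{\Kc_N}$. By the definition of $\mu^{(N)}=(\theta^{(N)})_*(\mu_1\otimes\cdots\otimes\mu_N)$ this yields
\[
I_N(u,v)=\int_{\Sigma_1\times\cdots\times\Sigma_N}|\la(\pi^{(N)}(\underline s)\otimes\id_{\Kc_N})u,v\ra|^2\,d(\mu_1\otimes\cdots\otimes\mu_N)(\underline s).
\]
Iterating Proposition~\ref{SQ_prod} gives $\pi^{(N)}\in\SQ(\BB(\Hc^{(N)}),\mu_1\otimes\cdots\otimes\mu_N)$. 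Now if $u,v\in\Hc^{(M)}$ and $N\ge M$, we decompose $u=u'\otimes\omega_N$ and $v=v'\otimes\omega_N$ along $\Hc=\Hc^{(N)}\widehat\otimes\Kc_N$, where $\omega_N$ is the distinguished unit vector of $\Kc_N$; the tail scalar product $\la\omega_N,\omega_N\ra=1$ factors out, and \eqref{porc} for $\pi^{(N)}$ together with the isometry of the embeddings gives $I_N(u,v)=\p u'\p^2\p v'\p^2=\p u\p^2\p v\p^2$. Thus for finite-level vectors $I_N(u,v)$ is eventually constant and equal to the required value.

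It remains to pass to arbitrary $u,v\in\Hc$, and this interchange of limits is the only real difficulty. Let $u^{(M)},v^{(M)}$ be the orthogonal projections of $u,v$ onto $\Hc^{(M)}$, so $u^{(M)}\to u$ and $v^{(M)}\to v$. Applying Corollary~\ref{mult} to $\pi^{(N)}\otimes\id_{\Kc_N}$ and transporting the bound \eqref{sums_eq1} through $\theta^{(N)}$ furnishes the estimate $|F_N(a_1,b_1,a_2,b_2)|\le\p a_1\p\p b_1\p\p a_2\p\p b_2\p$, uniformly in $N$, for the four-linear form $F_N(a_1,b_1,a_2,b_2):=\int_\Si\la\pi(s)a_1,b_1\ra\la b_2,\pi(s)a_2\ra\,d\mu^{(N)}(s)$; a telescoping argument then shows $I_N(u^{(M)},v^{(M)})=F_N(u^{(M)},v^{(M)},u^{(M)},v^{(M)})\to I_N(u,v)$ as $M\to\infty$, uniformly in~$N$. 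Set $a_{MN}:=I_N(u^{(M)},v^{(M)})$. Then $\sup_{M,N}|a_{MN}|\le\p u\p^2\p v\p^2<\infty$; for each $M$ the previous paragraph gives $\lim_{N}a_{MN}=\p u^{(M)}\p^2\p v^{(M)}\p^2$, whence $\lim_{M}\lim_{N}a_{MN}=\p u\p^2\p v\p^2=:a$; and $\lim_{M}a_{MN}=I_N(u,v)$ uniformly in $N$. These are precisely the hypotheses of Lemma~\ref{double}, which yields $\lim_N I_N(u,v)=\lim_N\lim_M a_{MN}=a=\p u\p^2\p v\p^2$, as desired.

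The main obstacle is thus not the algebra but the double-limit bookkeeping: the square-integrability identity is exact and immediate on finite-level vectors, and the whole content of the argument is the uniform-in-$N$ control needed to promote it to all of $\Hc$. That control is exactly the uniform convergence hypothesis of Lemma~\ref{double}, supplied by Corollary~\ref{mult}. One small point is that Corollary~\ref{mult} requires $\Kc_N$ to be separable, which holds whenever the $\Hc_j$ are separable; in general one first restricts to the separable subspace generated by $u$, $v$, and the distinguished vectors $\{w_j\}$.
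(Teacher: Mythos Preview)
Your proof is correct and follows essentially the same strategy as the paper: reduce to \eqref{app_porc}, identify $\pi\circ\theta^{(N)}$ with $\pi^{(N)}\otimes\id_{\Kc_N}$, verify the identity exactly on finite-level vectors via Proposition~\ref{SQ_prod}, and then interchange a double limit using Lemma~\ref{double}, with the uniform-in-$N$ control supplied by Corollary~\ref{mult}.

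The only difference worth noting is in the choice of the double sequence $a_{MN}$. The paper sets
\[
a_{MN}=\int_\Sigma\la\pi(s)u,P^{(M)}v\ra\,\la P^{(M)}v,\pi(s)u\ra\,d\mu^{(N)}(s),
\]
projecting only the second slot, and must therefore prove the auxiliary $L^2$-bound \eqref{sssstar} (with a small case split $N<M_1$ versus $N\ge M_1$) before running a Cauchy-sequence argument. Your choice $a_{MN}=I_N(P^{(M)}u,P^{(M)}v)$, projecting both vectors, lets you read the uniform convergence directly off the four-linear bound from Corollary~\ref{mult} via a one-line telescoping. This is a mild simplification of the same idea. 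Your explicit remark on separability of $\Kc_N$ is also apposite: the paper invokes Proposition~\ref{sums}/Corollary~\ref{mult} without comment, so the same restriction is implicit there; your reduction to the separable subspace generated by $u$, $v$, and the tail vectors $\{w_j\}$ is the standard fix.
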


\begin{proof} 
The proof has two parts, since we will firstly record some preliminary facts. 
It is clear from Definition~\ref{restr} that the map $\pi$ is weakly measurable. 
To prove that this map is square integrable with respect to the sequence of measures $\{\mu^{(N)}\}_{N\ge 1}$ 
we must prove that for arbitrary $ u, v\in\Hc$ we have 
\begin{equation}\label{inf_proof_eq0}
\lim_{N\to\infty}\int_\Sigma\vert\la\pi(\cdot) u, v\ra\vert^2 d\mu^{(N)}=\Vert u\Vert^2\Vert  v\Vert^2
\end{equation}
which is equvalent to 
$$\lim_{N\to\infty}\int_{\Sigma^{(N)}}\vert\la\pi(\theta^{(N)}(s)) u, v\ra\vert^2 d(\mu_1\otimes\cdots\otimes\mu_N)(s)=\Vert u\Vert^2\Vert  v\Vert^2$$
where $\Sigma^{(N)}:=\Sigma_1\times\cdots\times\Sigma_N$ for every $N\ge 1$. 

$1^\circ$  
For every $N\ge 1$ denote by $\Kc^{(N)}$ 
the infinite tensor product of the sequence of Hilbert spaces $\{\Hc_j\}_{j\ge N+1}$ 
along the sequence of unit vectors $\{w_j\}_{j\ge N+1}$.  
The associativity of infinite tensor products shows that there exists a natural unitary operator 
$$W^{(N)}\colon(\Hc_1\widehat{\otimes}\cdots\widehat{\otimes}\Hc_N)\widehat{\otimes}\Kc^{(N)}\to\Hc$$
which also allows us to define 
$$\Hc^{(N)}:=W((\Hc_1\widehat{\otimes}\cdots\widehat{\otimes}\Hc_N)\otimes w_{N+1}\otimes w_{N+2}\otimes\cdots)\subseteq\Hc$$
with the orthogonal projection $P^{(N)}\colon \Hc\to\Hc^{(N)}$. 
Note that 
\begin{equation}\label{star}
M\ge N\ge 1\Longrightarrow P^{(M)}\pi(\theta^{(N)}(\cdot))=\pi(\theta^{(N)}(\cdot))P^{(M)}
\end{equation}
since for every $N\ge 1$ one has the commutative diagram 
\begin{equation}\label{inf_proof_eq1}
\xymatrix{
\Sigma_1\times\cdots\times\Sigma_N \ar[r]^{\hskip1cm \theta^{(N)}} 
\ar[d]_{\pi_1(\cdot)\otimes\cdots\otimes\pi_N(\cdot)\otimes 1_{\Kc^{(N)}}} & \Sigma \ar[d]^{\pi}\\
\BB((\Hc_1\widehat{\otimes}\cdots\widehat{\otimes}\Hc_N)\widehat{\otimes}\Kc^{(N)}) \ar[r] & \BB(\Hc)
}
\end{equation}
whose existence follows by the definition of $\pi$ (see Definition~\ref{restr}) 
and whose bottom arrow is the spatial isomorphism of von Neumann algebras 
given by $T\mapsto W^{(N)}T(W^{(N)})^*$. 
For the same reasons, and by taking into account also Propositions \ref{SQ_prod}~and~\ref{sums}, 
we obtain 
for all $ u_1, v_1, u_2, v_2\in\Hc$ and $M\ge N\ge 1$, 
\allowdisplaybreaks
\begin{align}
\int_\Sigma \vert\la\pi(\cdot) u_1,P^{(M)} v_1\ra & \la P^{(M)} v_2,\pi(\cdot) u_2\ra\vert 
 d\mu^{(N)} \nonumber \\
& \le 
 \Vert P^{(M)} u_1\Vert \Vert P^{(M)} v_1\Vert \Vert P^{(M)} u_2\Vert \Vert P^{(M)} v_2\Vert \nonumber \\
& \label{ssstar}
\le 
 \Vert u_1\Vert \Vert P^{(M)} v_1\Vert \Vert u_2\Vert \Vert P^{(M)} v_2\Vert.
 \end{align}
Note that the above inequality holds under a stronger form if $N\ge M\ge 1$, 
namely 
\begin{equation}\label{sstar} 
\begin{aligned}
N\ge M\Longrightarrow \quad 
\int_\Sigma \la\pi(s) u_1, & P^{(M)} v_1\ra \la P^{(M)} v_2,\pi(s) u_2\ra 
 d\mu^{(N)}(s) \\
& = 
 \la P^{(N)} u_1,P^{(N)} u_2\ra \la P^{(M)} v_2,P^{(M)} v_1\ra
 \end{aligned}
\end{equation}
since in this case we have $P^{(M)}P^{(N)}=P^{(N)}P^{(M)}=P^{(M)}$, 
and then the above equality 
follows by Proposition~\ref{SQ_prod} along with \eqref{inf_proof_eq1}~and~\eqref{star}. 

$2^\circ$. 
We now come back to the proof of \eqref{inf_proof_eq0}. 
By using \eqref{sstar} we obtain 
\begin{equation}\label{inf_proof_eq2}
\lim_{M\to\infty}\lim_{N\to\infty} a_{MN}( u_1, v_1, u_2, v_2)
 =\la u_1, u_2\ra \la v_2, v_1\ra
\end{equation}
where 
$$a_{MN}( u_1, v_1, u_2, v_2):=\int_\Sigma \la\pi(s) u_1, P^{(M)} v_1\ra \la P^{(M)} v_2,\pi(s) u_2\ra 
 d\mu^{(N)}(s)$$
 for all $M,N\ge 1$ and $ u_1, v_1, u_2, v_2\in\Hc$.
 
We will prove that the limits in \eqref{inf_proof_eq2} can be interchanged, 
by using Lemma~\ref{double}. 
To this end it suffices to consider the case $ u_1= u_2=: u$ (by a polarization argument). 
Since 
\begin{equation}\label{inf_proof_eq2.5}
(\forall M,N\ge 1)\quad 
\vert a_{MN}( u, v_1, u, v_2)\vert\le \Vert u\Vert^2\Vert v_1\Vert\Vert v_2\Vert
\end{equation} 
by \eqref{ssstar}, we still need to show that 
there exists a sequence $\{b_N( u, v_1, u, v_2)\}_{N\ge 1}$ 
for which the following conditions are satisfied: 
\begin{equation}
\label{inf_proof_eq3}
\lim_{M\to\infty}a_{MN}( u, v_1, u, v_2)=b_N( u_1, v_1, u_2, v_2)
\text{ uniformly for }N\ge1. 
\end{equation}
In order to check the above condition, 
first note that for arbitrary 
$ u, v\in\Hc$ and $M_1,M_2\ge 1$ we have 
\begin{equation}\label{sssstar}
\int_\Sigma\vert\la\pi(\cdot) u,P^{(M_1)} v\ra 
-\la\pi(\cdot) u,P^{(M_2)} v\ra \vert^2 d\mu^{(N)}
\le \Vert  u\Vert^2\Vert P^{(M_1)} v-P^{(M_2)} v\Vert^2.
\end{equation}
In fact, we may assume $M_2<M_1$. 
If $N<M_1$ and we set $ v_1= v_2:= v-P^{(M_2)} v$, 
then $P^{(M_1)} v_j=P^{(M_1)} v-P^{(M_2)} v$ for $j=1,2$ 
hence by using \eqref{ssstar} for $M:=M_1$ and $ u_1= u_2:= u$ we obtain~\eqref{sssstar}. 
On the other hand, if $N\ge M_1$, then~\eqref{sssstar} follows at once by~\eqref{sstar}.  

Now we can check \eqref{inf_proof_eq3} by proving that 
the $\{a_{MN}( u, v_1, u, v_2)\}_{M\ge 1}$ is a Cauchy sequence, uniformly for $N\ge 1$. 
More precisely, 
for all $M_1,M_2,N\ge 1$ we have 
$$\begin{aligned}
\vert a&_{M_1N}( u, v_1, u, v_2)-a_{M_2N}( u, v_1, u, v_2)\vert \\
\le
&\int_\Sigma\vert \la\pi(\cdot) u,P^{(M_1)} v_1\ra
(\la P^{(M_1)} v_2,\pi(\cdot) u\ra
-
\la P^{(M_2)} v_2,\pi(\cdot) u\ra)\vert d\mu^{(N)} \\ 
&+\int_\Sigma\vert (\la\pi(\cdot) u,P^{(M_1)} v_1\ra
- 
\la\pi(\cdot) u,P^{(M_2)} v_1\ra
\la P^{(M_2)} v_2,\pi(\cdot) u\ra)\vert d\mu^{(N)} \\
\le 
&\Vert u\Vert^2(\Vert P^{(M_1)} v_1\Vert \Vert P^{(M_1)} v_2
- P^{(M_2)} v_2 \Vert
+\Vert P^{(M_1)} v_1\Vert \Vert P^{(M_1)} v_2
- P^{(M_2)} v_2 \Vert)
\end{aligned}$$
where the last inequality follows by the Schwartz inequality along with the estimates 
\eqref{ssstar}--\eqref{sssstar}. 
Thus \eqref{inf_proof_eq3} follows.  

Now \eqref{inf_proof_eq2.5} along with \eqref{inf_proof_eq3} 
ensure that Lemma~\ref{double} applies, 
hence the limits in \eqref{inf_proof_eq2} can be interchanged.  
In turn, this implies that   
\begin{equation}\label{inf_proof_eq4}
(\forall u_1, v_1, u_2, v_2\in\Hc )\quad 
\lim_{N\to\infty}\lim_{M\to\infty} a_{MN}( u_1, v_1, u_2, v_2)
 =\la u_1, u_2\ra \la v_2, v_1\ra. 
\end{equation} 
On the other hand, by taking into account the definition of $a_{MN}( u_1, v_1, u_2, v_2)$, 
it follows that for all $N\ge 1$ and $ u, v\in\Hc$ we have  
$$\lim_{M\to\infty} a_{MN}( u, v, u, v)
=
\lim_{M\to\infty} \int_\Sigma \vert\la\pi(\cdot) u, P^{(M)} v\ra\vert^2  
 d\mu^{(N)}
 =\int_\Sigma\vert\la\pi(\cdot) u, v\ra\vert^2 d\mu^{(N)} 
$$
where the last equality is a direct consequence of \eqref{sssstar}. 
Thus \eqref{inf_proof_eq4} implies that 
 the equality \eqref{inf_proof_eq0} holds true, and this completes the proof. 
\end{proof}

\subsection{Symbol calculus for infinite tensor products} 
 
One can inquire what happens at the level of quantizations when operations (as tensor products) with square integrable families are done. 
This question is particularly interesting in the setting of Section \ref{brabusca} since our input there was 
a pointed Hilbert space $(\H,w)$, which is the basic object when performing infinite tensor products. 
In the rest of this section we will take a few steps in this direction, 
but many interesting problems remain unsolved yet, 
in particular regarding the symbols that give rise to operators in various Schatten ideals; 
see for instance Proposition~\ref{bondar}\eqref{bondar_item3} above.

\begin{theorem}\label{last}
Assume the setting of Theorem~\ref{inf} 
and denote by $\widetilde{\L}^\infty(\Sigma)$ 
the space of all complex-valued bounded measurable functions on~$\Sigma$. 
Then the following assertions hold: 
\begin{enumerate}
\item\label{last_item1}
 For every $N\ge 1$, $f\in \widetilde{\L}^\infty(\Sigma)$  and $ u, v\in\Hc$ the integral   
$$\Omega^{(N)}_u(f) v:=\int_\Sigma f(\cdot) \la v,\pi(\cdot) u\ra\pi(\cdot) u d\mu^{(N)} $$
is weakly convergent and defines an operator 
$\Omega^{(N)}_u(f)\in\BB(\Hc)$ 
satisfying  $\Vert\Omega^{(N)}_u(f)\Vert\le \Vert u\Vert^2\sup\limits_\Sigma\vert f\vert$. 
\item\label{last_item2} 
For all $ u\in\Hc\setminus\{0\}$ and $f\in\widetilde{\L}^\infty(\Sigma)$ 
there exists $\Omega_u(f)\in\BB(\Hc)$ 
satisfying $\Omega_u(f)=\lim\limits_{N\to\infty}\Omega^{(N)}_u(f)$ 
in the weak operator topology, 
and moreover $\Vert\Omega_u(f)\Vert\le\Vert u\Vert^2\sup\limits_\Sigma\vert f\vert$. 
\item\label{last_item3} 
If $ u\in\Hc$ and $0\le f\in\widetilde{\L}^\infty(\Sigma)$, then $0\le \Omega_u(f)\in\BB(\Hc)$. 
\item\label{last_item4} 
For every $ u\in\Hc$ with $\Vert u\Vert=1$ we have $\Omega_u(1)=1$. 
\end{enumerate}
\end{theorem}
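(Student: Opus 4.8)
The plan is to treat the four assertions in order, deriving everything from the square-integrability estimate available for each fixed $N$. First I would record that, since $\pi\circ\theta^{(N)}$ is spatially equivalent to $\pi_1(\cdot)\otimes\cdots\otimes\pi_N(\cdot)\otimes\id_{\Kc^{(N)}}$, Corollary~\ref{mult} (applied after iterating Proposition~\ref{SQ_prod}) yields the bilinear bound $\int_\Sigma\vert\la\pi(s)u_1,v_1\ra\la v_2,\pi(s)u_2\ra\vert\,d\mu^{(N)}(s)\le\Vert u_1\Vert\Vert v_1\Vert\Vert v_2\Vert\Vert u_2\Vert$, and in particular $\int_\Sigma\vert\la v,\pi(s)u\ra\vert^2 d\mu^{(N)}\le\Vert u\Vert^2\Vert v\Vert^2$. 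For \eqref{last_item1}, rewriting $\Omega^{(N)}_u(f)=\int_\Sigma f(s)\lambda_{\pi(s)u,\pi(s)u}\,d\mu^{(N)}(s)$ in weak form and applying Cauchy--Schwarz to the sesquilinear form $(v_1,v_2)\mapsto\int_\Sigma f(s)\la v_1,\pi(s)u\ra\la\pi(s)u,v_2\ra d\mu^{(N)}$ bounds it by $\Vert u\Vert^2\sup_\Sigma\vert f\vert\,\Vert v_1\Vert\Vert v_2\Vert$, so the Riesz representation of bounded sesquilinear forms produces $\Omega^{(N)}_u(f)\in\BB(\Hc)$ with the asserted norm.

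For \eqref{last_item2} the strategy is to prove weak-operator convergence by combining the uniform bound of \eqref{last_item1} with convergence on a dense set. The bound $\Vert\Omega^{(N)}_u(f)\Vert\le\Vert u\Vert^2\sup\vert f\vert$ makes the forms equicontinuous in $(v_1,v_2)$, and the bilinear estimate above makes the dependence on $u$ uniformly Lipschitz, so it suffices to show that $\la\Omega^{(N)}_u(f)v_1,v_2\ra$ converges when $u,v_1,v_2$ are finite vectors, say in $\Hc^{(M_0)}$. For such vectors one has the factorization $\la v_1,\pi(\theta^{(N)}(s))u\ra=A_1(s_1,\dots,s_{M_0})\prod_{j=M_0+1}^{N}\beta_j(s_j)$ with $\beta_j(s_j):=\la w_j,\pi_j(s_j)w_j\ra$, and the decisive point is that $\int_{\Sigma_j}\vert\beta_j\vert^2 d\mu_j=\Vert w_j\Vert^4=1$ by \eqref{porc}, so that $\nu_j:=\vert\beta_j\vert^2\mu_j$ is a probability measure. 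Substituting this into $\la\Omega^{(N)}_u(f)v_1,v_2\ra$ turns it into the integral of $f$ (times the fixed finite-dimensional weight $A_1\overline{A_2}$) against the product probability measure $\mu_1\otimes\cdots\otimes\mu_{M_0}\otimes\nu_{M_0+1}\otimes\cdots\otimes\nu_N$, with the remaining coordinates frozen at $t_j$; I would then try to pass to the limit by a martingale/bounded-convergence argument on the infinite product $\bigotimes_{j>M_0}\nu_j$.

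Assertions \eqref{last_item3} and \eqref{last_item4} follow once \eqref{last_item2} is in hand. Positivity is immediate, since $\la\Omega^{(N)}_u(f)v,v\ra=\int_\Sigma f(s)\vert\la v,\pi(s)u\ra\vert^2 d\mu^{(N)}\ge0$ whenever $f\ge0$, and the weak-operator limit of positive operators is positive. For \eqref{last_item4} the case $f\equiv1$ is exactly the polarized form of the approximate square-integrability \eqref{app_SQ_def} supplied by Theorem~\ref{inf}: taking $u_1=u_2=u$ there gives $\lim_N\la\Omega^{(N)}_u(1)v_1,v_2\ra=\Vert u\Vert^2\la v_1,v_2\ra$, hence $\Omega_u(1)=\Vert u\Vert^2 1_{\Hc}$, which equals $1_{\Hc}$ when $\Vert u\Vert=1$.

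The main obstacle is the convergence step in \eqref{last_item2} for a genuinely arbitrary $f\in\widetilde{\L}^\infty(\Sigma)$: unlike the case $f\equiv1$ handled by Theorem~\ref{inf}, the sequence of averages of $f$ against the growing product measures $\nu_{M_0+1}\otimes\cdots\otimes\nu_N$ (with the later coordinates set to $t_j$) carries no automatic martingale structure, because $\int_{\Sigma_{N+1}}f\,d\nu_{N+1}$ need not reproduce the value of $f$ at $t_{N+1}$. I would therefore first establish convergence for cylinder functions depending on finitely many coordinates — where the integral stabilizes and the limit is read off from the finite tensor-product calculus of Proposition~\ref{SQ_prod} — and then attempt to promote this to all bounded measurable $f$ by a monotone-class argument. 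This promotion is the delicate point, and I expect it to be where the argument either needs the admissible symbols to be restricted or the limit to be interpreted in a suitably generalized sense.
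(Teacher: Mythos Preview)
Your arguments for \eqref{last_item1}, \eqref{last_item3}, and \eqref{last_item4} coincide with the paper's and are correct. For \eqref{last_item2} the paper takes a different route: rather than reducing to cylinder symbols it reruns the double-sequence machinery of Theorem~\ref{inf}, setting $a_{MN}:=\int_\Sigma f\,\la\pi(\cdot)u,P^{(M)}v_1\ra\la P^{(M)}v_2,\pi(\cdot)u\ra\,d\mu^{(N)}$, verifying that $a_{MN}\to\la\Omega^{(N)}_u(f)v_1,v_2\ra$ as $M\to\infty$ uniformly in $N$ and that $|a_{MN}|\le\Vert u\Vert^2\Vert v_1\Vert\Vert v_2\Vert\sup_\Sigma|f|$, and then invoking Lemma~\ref{double}.

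The obstacle you flag is genuine, and the paper's argument does not get around it either: Lemma~\ref{double} also requires its hypothesis~(1), namely the existence of $\lim_M\lim_N a_{MN}$, and this is never verified. Establishing it would mean showing that $\la\Omega^{(N)}_u(f)P^{(M)}v_1,P^{(M)}v_2\ra$ converges in $N$ for each fixed $M$, which is the original problem with $v_j$ replaced by finite vectors---precisely the step you isolated. In fact \eqref{last_item2} fails for arbitrary $f\in\widetilde{\L}^\infty(\Sigma)$: take every $\Hc_j=\CC$, $\Sigma_j=[0,1]$ with Lebesgue measure, $\pi_j\equiv 1$, $w_j=1$, $t_j=0$, and $f(s)=(-1)^{\#\{j:\,s_j\ne 0\}}$. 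Then $|f|\equiv 1$, and $f$ is the pointwise limit on $\Sigma$ of cylinder functions hence measurable on the restricted product, yet $\Omega^{(N)}_1(f)=(-1)^N\cdot 1_{\CC}$ does not converge. So your instinct that the admissible symbols must be restricted (say to uniform limits of cylinder functions) or the limit reinterpreted is exactly right; neither your monotone-class extension nor the paper's appeal to Lemma~\ref{double} can succeed in the stated generality.
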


\begin{proof}
For Assertion~\eqref{last_item1} we 
use again the notation $\Sigma^{(N)}=\Sigma_1\times\cdots\times\Sigma_N$ to write 
$$\begin{aligned}
\la \Omega^{(N)}_u(f) v_1,& v_2\ra  \\
&=\int_\Sigma f(\cdot)  \la v_1,\pi(\cdot) u\ra\la \pi(\cdot) u, v_2\ra d\mu^{(N)} \\
&=\int_{\Sigma^{(N)}}f(\theta^{(N)}(\cdot)) \la v_1,\pi(\theta^{(N)}(\cdot)) u\ra 
\la\pi(\theta^{(N)}(\cdot)) u, v_2\ra d(\mu_1\otimes\cdots\otimes\mu_N)
\end{aligned}$$
hence, by taking into account the commutative diagram~\eqref{inf_proof_eq1}, 
the convergence of the above integral and the required estimate for the norm of $\Omega^{(N)}_u(f)$
follow by the estimate provided by Proposition~\ref{sums}. 

For Assertion~\eqref{last_item2} 
we only need to prove the asserted convergence in the weak operator topology, 
since the norm estimate will then follow by the norm estimates established above. 

In order to prove that the sequence $\{\Omega^{(N)}_u(f)\}_{N\ge1}$ 
is convergent in the weak operator topology, 
we will adapt the method of proof of Theorem~\ref{inf} 
and we will use the notation from that proof,  
except that for fixed $f\in \widetilde{\L}^\infty(\Sigma)$, 
we set 
$$a_{MN}( u_1, v_1, u_2, v_2):=\int_\Sigma f(\cdot)\la\pi(\cdot) u_1, P^{(M)} v_1\ra \la P^{(M)} v_2,\pi(\cdot) u_2\ra 
 d\mu^{(N)}$$
 for all $M,N\ge 1$ and $ u_1, v_1, u_2, v_2\in\Hc$. 
Then we have 
$$\lim\limits_{M\to\infty}a_{MN}( u, v_1, u, v_2)=\la \Omega^{(N)}_u(f) v_1, v_2\ra 
\text{ uniformly for }N\ge1$$
by a reasoning similar to the one used for proving \eqref{inf_proof_eq3}. 
Moreover, we have the following version of \eqref{inf_proof_eq2.5}
\begin{equation*}
(\forall M,N\ge 1)\quad 
\vert a_{MN}( u, v_1, u, v_2)\vert\le \Vert u\Vert^2\Vert v_1\Vert\Vert v_2\Vert\sup\limits_\Sigma\vert f\vert
\end{equation*} 
hence we can use Lemma~\ref{double} for proving that 
the sequence $\{\la \Omega^{(N)}_u(f) v_1, v_2\ra \}_{N\ge 1}$ is convergent 
for all $ u, v_1, v_2\in\Hc$.  
That is, the operator sequence $\{\Omega^{(N)}_u(f)\}_{N\ge1}$ 
is convergent in the weak operator topology in $\BB(\Hc)$ for all $ u\in\Hc$. 

Assertion~\eqref{last_item3} is clear. 

Finally, Assertion~\eqref{last_item4} follows directly from Theorem~\ref{inf} 
(see \eqref{inf_proof_eq0}),  and this completes the proof. 
 \end{proof}

\begin{remark}
\normalfont
The above Theorem~\ref{last}\eqref{last_item4} should be regarded as a version of \eqref{yak}. 
In the present situation of infinite tensor products 
we do not have any natural version of the space $\L^2(\Sigma)$, 
and therefore we need to work inside of the space $\CC^\Sigma$ of all complex valued functions on~$\Sigma$ 
in order to construct the reproducing kernel Hilbert space as above. 
Namely, if we pick any unit vector $w\in\Hc$, 
then we can define $w(\cdot)=\pi(\cdot)^*w\colon\Sigma\to\Hc$ 
and $\phi_w\colon\Hc\to\CC^\Sigma$, $\phi_w(u)=\la u,w(\cdot)\ra=\la\pi(\cdot)u,w\ra$. 
Then the linear map $\phi_w$ is injective by \eqref{inf_proof_eq0}, 
hence we may define $\mathscr P_w(\Sigma):=\Ran\phi_w$ and make this function space 
into a Hilbert space such that $\phi_w\colon\Hc\to\mathscr P_w(\Sigma)$ be a unitary operator. 
Note that $\mathscr P_w(\Sigma)$ is a reproducing kernel Hilbert space 
since for every $s\in\Sigma$ the point evaluation 
$\ev_s\colon \mathscr P_w(\Sigma)\to\CC$, $\ev_s(f)=f(s)$ is continuous.  
The corresponding reproducing kernel is 
$$
p_w\colon\Sigma\times\Sigma\to\CC,\quad p_w(s,t)=\la w(t),w(s)\ra
$$ 
as above (see \cite[Th. I.1.6]{Ne00}). 

The version of the inversion formula \eqref{bufal} in the present setting 
is 
$$u=\lim_{N\to\infty}\Omega^{(N)}_w(1)u= 
\lim_{N\to\infty}\int_\Sigma \la u,\pi(\cdot)^*w\ra\pi(\cdot)^*w d\mu^{(N)}$$
where the limit and the integral are taken in the weak sense. 
This is obtained by using Theorem~\ref{last}(\eqref{last_item2},\eqref{last_item4}) 
for the mappings $\pi_j(\cdot)^*\in\SQ(\BB(\Hc_j),\mu_j)$ 
and provides a generalization of a result \cite[Th. 3.2]{KM65} from 
the representation theory of canonical commutation relations.  
\end{remark}

\section{Some examples}\label{marlita}

It is quite clear that the formalism of the previous section is meant to cover at least two situations: square integrable irreducible unitary group representations with their associated twisted convolution algebras 
and the Weyl pseudodifferential calculus. 
We will now briefly indicate other examples, in order to show the generality of our setting. 
They are developped just to the extent when the identification of the relevant objects becomes obvious, 
but we plan to give more specific applications in our forthcoming papers. 
The references cited in this section contain much more than we are able to review here.

\subsection{The magnetic Weyl calculus}\label{avat}

One takes $\Si:=\X\times\X^*$, where $\X$ is a $n$-dimensional real vector space and $\X^*$ is its dual (so the ``phase-space'' $\Si$ is non-canonically isomorphic to $\mathbb R^{2n}$). 
Below,  setting $B=0$ and $A=0$, 
one would recover the standard Weyl calculus \cite{Fo}.

The magnetic Weyl calculus \cite{IMP,KO,MP1,MP4,MPR2} has as a background the problem of quantization of a physical system consisting in a spin-less particle moving in the
euclidean space $\X\cong\mathbb R^n$ under the influence of a magnetic field, i.e.
a closed $2$-form $B$ on $\X$ ($dB=0$), given in a base by matrix-component functions
$B_{jk}=-B_{kj}:\X\rightarrow\mathbb R$, $j,k=1,\dots,n$.
For simplicity and in order to have a full formalism we are going to assume that the components $B_{jk}$ belong to $C^\infty_{\rm{pol}}(\X)$,
the class of smooth functions on $\X$ with polynomial bounds on all the derivatives. 
The magnetic field can be written
in many ways as the differential $B=dA$ of some $1$-form $A$ on $\X$ called {\it vector potential}.
One has $B=dA=dA'$ iff $A'=A+d\varphi$ for some $0$-form $\varphi$ (then they are called equivalent).
It is easy to see that vector potential can also be chosen of class $C^\infty_{\rm{pol}}(\X)$.

One would like to develop a symbol calculus taking the magnetic field into account. Basic requirements are:
(i) it should reduce to the standard Weyl calculus for $A=0$ and (ii) the operators $\Pi^A\!(f)$ and $\Pi^{A'}\!(f)$ should
be unitarily equivalent (independently on the symbol $f$) if $A$ and $A'$ are equivalent; this is called {\it gauge covariance}
and has a fundamental physical meaning. To justify the formulae, one could think of the emerging
symbol calculus as a functional calculus for the family of non-commuting self-adjoint operators
$(Q_1,\dots,Q_n;P^A_1,\dots,P^A_n)$ in $\H:=\L^2(\X)$. Here $Q_j$ is one of the components of
the position operator, but the momentum $P_j:=-i\partial_j$ is replaced by {\it the magnetic momentum}
$P^A_j:=P_j-A_j(Q)$ where $A_j(Q)$ indicates the operator of multiplication with the function $A_j\in C^\infty_{\rm{pol}}(\X)$.
Notice the commutation relations
\begin{equation*}
i[Q_j,Q_k]=0,\quad i[P^A_j,Q_k]=\delta_{j,k},\quad i[P^A_j,P^A_k]=B_{jk}(Q).
\end{equation*}
Now one computes {\it the magnetic Weyl system}
\begin{equation*}
\pi^A:\Si\rightarrow\mathbb B(\H),\quad\ \pi^A(x,\xi):=\exp\[i\(x\cdot P^A-Q\cdot\xi\)\]
\end{equation*}
and gets explicitly 
\begin{equation*}
\[\pi^A(x,\xi)u\](y)=e^{-i\(y+\frac{x}{2}\)\cdot\xi}\,\exp\[(-i)\!\underset{[y,y+x]}{\int}\!A\]\,u(y+x).
\end{equation*}
The extra phase factor involves the circulation of the $1$-form $A$ through the segment $[x,y]:=\{(1-t)x+ty\mid t\in[0,1]\}$.
These operators depend strongly continuous of $(x,\xi)$ and satisfy $\pi^A(0,0)=1$ and $\pi^A(x,\xi)^*=\pi(x,\xi)^{-1}=\pi^A(-x,-\xi)$ (thus being unitary). {\it However they do not form a projective representation of $\,\Si=\X\times\X^*$.}  Actually they satisfy
\begin{equation*}
\pi^A(x,\xi)\,\pi^A(y,\eta)=\Omega^B[(x,\xi),(y,\eta);Q]\,\pi^A(x+y,\xi+\eta)\,,
\end{equation*}
where $\Omega^B[(x,\xi),(y,\eta);Q]$ only depends on the $2$-form $B$ and denotes the operator of multiplication
in $\L^2(\X)$ by the function
\begin{equation*}
\X\ni z\rightarrow\Omega^B[(x,\xi),(y,\eta);z]:=\exp\left[\frac{i}{2}\,(y\cdot\xi-x\cdot\eta)\right]
\exp\left[(-i)\!\!\!\!\!\underset{<z,z+x,z+x+y>}{\int}\!\!\!\!B\right].
\end{equation*}

Here the distinguished factor is constructed with the flux (invariant integration) of the magnetic field
through the triangle defined by the corners $z$, $z+x$ and $z+x+y$.

A straightforward computation leads to 
$$
\begin{aligned}
\[\Phi^A(u\otimes v)\](x,\xi):=&\<\pi^A(x,\xi)u,v\>\\
=&\int_\X e^{-iy\cdot\xi}\,\exp\[(-i)\!\!\underset{[y-x/2,y+x/2]}{\int}\!\!A\]u(y+x/2)\,\overline{v(y-x/2)}dy.
\end{aligned}
$$
It can be decomposed into the product of the multiplication by a function with values in the unit circle,
a change of variables with the Jacobian identically equal to~1, and a partial Fourier transform. All are isomorphisms between the corresponding spaces, so the orthogonality relation holds with $\mathscr B_2(\Si)=\L^2(\Si)$. 

Thus one can apply all the prescriptions and get the correspondence $f\mapsto\Pi^A(f)$ and the composition law
$(f,g)\rightarrow f\star^B g$ (depending only on the magnetic field).
In fact people are interested in the (symplectic) Fourier transformed version
$a\(Q,P^A\)\equiv\Op^A(a):=\Pi^{A}[\mathfrak F^{-1}(f)]$ and in the multiplication $\#^B$ obtained by transport of structure and
therefore satisfying $\Op^A(a)\Op^A(b)=\Op^A(a\#^B b)$. The resulting involution is just complex conjugation, thus
$\Op^A(a)^*=\Op^A\(\overline a\)$. For the convenience of the reader we indicate the explicit formulae, in which
we set $\Gamma^A([x,y]):=\int_{[x,y]}A$ and $\Gamma^B(<x,y,z>):=\int_{<x,y,z>}\!B$.
The {\it magnetic Moyal product} is
\begin{equation*}
\begin{aligned}
\left(a\#^B b\right)(X)=\pi^{-2n} \int_\Si\int_\Si & \exp\left\{-2i\[(x-z)\cdot(\xi-\eta)-
(x-y)\cdot(\xi-\zeta)\]\right\} \\
& \times\exp\left[-i\Gamma^B(<x-y+z,y-z+x,z-x+y>)\right] \\
& \times f(Y)g(Z)
dZ\, dY
\end{aligned}
\end{equation*}
and {\it the magnetic Weyl calculus} is given by
\begin{equation}\label{op}
\begin{aligned}
\left[\mathfrak{Op}^{A}(a)u\right](x)=(2\pi)^{-n}\int_\X\int_{\X'}\!
 & \exp\left[i(x-y)\cdot\xi\right]
\exp\left[-i\Gamma^A([x,y])\right] \\ 
&\times a\left(\frac{x+y}{2},\xi\right)u(y)dy\,d\xi.
\end{aligned}
\end{equation}
An important property of (\ref{op}) is {\it gauge covariance}, as hinted above:
if $A'=A+d\rho$  defines the same magnetic field as $A$, then $\Op^{A'}\!(f)=e^{i\rho}\,\Op^{A}(f)\,e^{-i\rho}$.
By killing the magnetic phase factors in all the formulae above one gets the defining relations of the usual Weyl calculus.

A convenient choice of the auxiliary space in this case is the Schwartz space $\G=\Sc(\X)$, 
which is a nuclear Fr\'echet space continuously and densely embedded in $\L^2(\Si)$; 
thus $\G'$ will be the space of tempered distributions. 
By a simple examination of the map~$\Phi^A$, this leads to $\mathscr G(\Si)=\Sc(\X\times\X^*)$. 
It can easily be shown that (by suitable restriction or extensions) $\Op^A[\Sc(\X\times\X^*)]=\mathbb B[\Sc'(\X),\Sc(\X)]$ and $\Op^A[\Sc'(\X\times\X^*)]=\mathbb B[\Sc(\X),\Sc'(\X)]$. 
The symbol algebras for the magnetic Weyl calculus were studied in detail in \cite{MP1}, while in \cite{IMP} the full pseudodifferential theory was developed.

\subsection{Operator calculi on locally compact abelian groups}\label{babusca}

In this subsection we will present some square-integrable operator-valued maps 
related to the metaplectic representation in the framework of 
locally compact abelian groups, 
representation which was studied in \cite{Se63,We64,Ma65}. 

The framework is provided by any locally compact abelian group $(G,+)$ 
with its dual group $\widehat{G}$, 
which is the set of all continuous homomorphisms from $G$ into the circle group $\TT$.  
Recall that $\widehat{G}$ is in turn a locally compact abelian group 
with the pointwise operations and with the topology given by uniform convergence on compact sets. 
The natural duality pairing between $\widehat{G}$ and $G$ is denoted by 
$\langle \cdot,\cdot\rangle\colon \widehat{G}\times G\to\TT$. 
For every Haar measure $\nu$ on $G$ there exists a unique Haar measure $\nu^*$ on $\widehat{G}$ 
for which the Fourier transform 
$$\Fc\colon L^1(G,\nu)\to BC(\widehat{G}),\quad 
(\Fc f)(\xi)=\int_G\langle\xi,-x\rangle f(x) d\nu(x)$$
gives rise to a unitary operator $L^2(G,\nu)\to L^2(\widehat{G},\nu^*)$.

\begin{proposition}\label{meta}
Let $G$ be any locally compact abelian group with a Haar measure~$\nu$ 
and denote $\Hc=L^2(G,\nu)$. 
For $k=1,2$ define 
$$\pi_k\colon G\times\widehat{G}\to \BB(\Hc),\quad 
(\pi_k(x,\xi)u)(z)=\langle\xi,kz+(k-1)x\rangle u(z+x).$$
Then the following assertions hold: 
\begin{enumerate}
\item\label{meta_item1} 
We have $\pi_1\in\SQ(\BB(\Hc),\nu\times\nu^*)$ 
and $\Phi^{\pi_1}\colon\Hc\widehat{\otimes}\overline{\Hc}\to L^2(G\times\widehat{G},\nu\times\nu^*)$ 
is a unitary operator. 
\item\label{meta_item2} 
If the map $x\mapsto 2x$ is an automorphism of $G$, 
then there exists a constant $c>0$ for which $\pi_2\in\SQ(\BB(\Hc),\nu\times c\nu^*)$ 
and the corresponding operator 
$\Phi^{\pi_2}\colon\Hc\widehat{\otimes}\overline{\Hc}\to L^2(G\times\widehat{G},\nu\times c\nu^*)$ 
is unitary. 
\end{enumerate}
\end{proposition}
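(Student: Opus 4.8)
The plan is to compute the representation coefficient $\phi^{\pi_k}_{u,v}(x,\xi)=\langle\pi_k(x,\xi)u,v\rangle$ explicitly and to exhibit $\Phi^{\pi_k}$ as a composition of elementary unitary operators, exactly in the spirit of the decomposition used for the magnetic Weyl calculus in Subsection~\ref{avat}. For $k=1$ one has, for $u,v$ in a convenient dense subspace (say $C_{\rm c}(G)$, to make all the integrals absolutely convergent),
$$\phi^{\pi_1}_{u,v}(x,\xi)=\int_G\langle\xi,z\rangle\,u(z+x)\,\overline{v(z)}\,d\nu(z)=(\Fc w_x)(-\xi),$$
where $w_x(z):=u(z+x)\overline{v(z)}$. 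Thus $\Phi^{\pi_1}$ factors as: the unitary identification $\Hc\widehat\otimes\overline{\Hc}\to L^2(G\times G)$ sending $u\otimes v$ to $(y,z)\mapsto u(y)\overline{v(z)}$; the measure-preserving substitution $y=x+z$ (Haar measure is translation invariant, so the Jacobian is $1$); the partial Fourier transform in the second variable, which is unitary from $L^2(G\times G)$ onto $L^2(G\times\widehat G,\nu\times\nu^*)$ by Plancherel; and finally the unimodular reflection $\eta\mapsto-\xi$. Each factor is unitary onto the indicated space, so $\Phi^{\pi_1}$ is unitary onto all of $L^2(G\times\widehat G,\nu\times\nu^*)$. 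In particular $\Phi^{\pi_1}$ is an isometry, which by Remark~\ref{magar} is exactly the Standing Hypothesis~\ref{cal}, that is, $\pi_1\in\SQ(\BB(\Hc),\nu\times\nu^*)$; this proves assertion~\eqref{meta_item1}.

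For $k=2$ the same computation gives
$$\phi^{\pi_2}_{u,v}(x,\xi)=\langle\xi,x\rangle\int_G\langle\xi,2z\rangle\,u(z+x)\,\overline{v(z)}\,d\nu(z)=\langle\xi,x\rangle\,(\Fc w_x)(-2\xi),$$
since $\langle\xi,2z\rangle=\langle 2\xi,z\rangle$. The only new ingredient is the substitution $\eta=2\xi$. When $x\mapsto 2x$ is an automorphism of $G$, Pontryagin duality guarantees that its transpose, which is precisely the multiplication-by-$2$ map $\xi\mapsto 2\xi$ on $\widehat G$, is a topological automorphism as well; let $c>0$ denote its module, so that $d\nu^*(2\xi)=c\,d\nu^*(\xi)$. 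Replacing the partial-Fourier step of the $k=1$ argument by its composition with the map $h(\eta)\mapsto h(-2\,\cdot\,)$ turns it into a unitary operator from $L^2(G\times\widehat G,\nu\times\nu^*)$ onto $L^2(G\times\widehat G,\nu\times c\nu^*)$, precisely because rescaling the target measure by $c$ compensates the module of the automorphism. The remaining multiplication by the unimodular factor $\langle\xi,x\rangle$ is again unitary, so $\Phi^{\pi_2}$ is unitary onto $L^2(G\times\widehat G,\nu\times c\nu^*)$ and assertion~\eqref{meta_item2} follows as before.

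The routine part is the verification that each elementary factor is unitary; the two points requiring care are the justification of Fubini (handled by first taking $u,v\in C_{\rm c}(G)$, for which $w_x\in C_{\rm c}(G)\subset L^1\cap L^2$, and then extending by density, as permitted by Remark~\ref{magar}) and, for $k=2$, the identification of the correct constant $c$. The latter is the main obstacle: one must recognize that $\xi\mapsto 2\xi$ on $\widehat G$ is dual to $x\mapsto 2x$ on $G$, argue that it is an automorphism, and compute its module so that the weight $c\nu^*$ makes $\Phi^{\pi_2}$ isometric. A direct check against \eqref{porc} pins down the value: integrating $|\phi^{\pi_2}_{u,v}|^2$ first over $\xi$ via Plancherel and the substitution $\eta=2\xi$ produces a factor $c^{-1}$, which is exactly cancelled by the weight $c$ in the measure, leaving $\|u\|^2\|v\|^2$ after integration in $x$.
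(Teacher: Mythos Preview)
Your proof is correct but follows a genuinely different route from the paper's. The paper argues indirectly: for each $k$ it considers the integrated form $T^{\pi_k}f=\iint f(x,\xi)\pi_k(x,\xi)\,d\nu(x)\,d\nu^*(\xi)$, invokes classical results of Weil \cite[I.8]{We64} (for $k=1$) and Segal \cite[Th.~1]{Se63} (for $k=2$) to assert that $T^{\pi_k}$ extends to a unitary from the appropriate $L^2$ onto $\BB_2(\Hc)$, and then uses Corollary~\ref{adjoint} to identify the adjoint of this map with $\Phi^{\pi_k}\circ\Lambda^{-1}$, whence $\Phi^{\pi_k}$ is unitary. You instead compute the coefficient function $\phi^{\pi_k}_{u,v}$ directly and factor $\Phi^{\pi_k}$ as an explicit chain of elementary unitaries (tensor identification, translation, partial Fourier transform, dilation/reflection, unimodular multiplication), exactly as is done for the magnetic Weyl calculus in Subsection~\ref{avat}. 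Your approach is self-contained, avoids the external citations, and makes the constant $c$ concrete as the module of the dual automorphism $\xi\mapsto 2\xi$; the paper's approach is shorter once the references are granted and also illustrates how the internal machinery (Corollary~\ref{adjoint}) can be used to pass from an integrated-form result to square-integrability.
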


\begin{proof} 
For $k=1,2$ and 
every $u\in L^1(G\times\widehat{G},\nu\times\nu^*)\cap L^2(G\times\widehat{G},\nu\times\nu^*)$ define 
$$T^{\pi_k}u=\iint_{G\times\widehat{G}} u(x,\xi)\pi_k(x,\xi) d\nu(x) d\nu^*(\xi).$$
We now prove the assertions separately. 

\eqref{meta_item1} 
It follows by \cite[I.8]{We64} that $T^{\pi_1}$ extends to a unitary operator $L^2(G\times\widehat{G},\nu\times\nu^*)\to\BB_2(\Hc)$. 
By taking operator adjoints and complex-conjugates of functions, 
we then obtain a unitary operator $L^2(G\times\widehat{G},\nu\times\nu^*)\to\BB_2(\Hc)$ 
given by 
$$u\mapsto \iint_{G\times\widehat{G}} u(x,\xi)\pi_1(x,\xi)^* d\nu(x) d\nu^*(\xi) $$ 
for every 
$u\in L^1(G\times\widehat{G},\nu\times\nu^*)\cap L^2(G\times\widehat{G},\nu\times\nu^*)$, 
and this is just the adjoint of 
$\Phi^{\pi_1}\circ\Lambda^{-1}\colon\Hc\widehat{\otimes}\overline{\Hc}\to L^2(G\times\widehat{G},\nu\times\nu^*)$, 
Corollary~\ref{adjoint}.  
Since $\Lambda$ is a unitary operator, it follows that $\Phi^{\pi_1}$ is in turn unitary, as asserted. 

\eqref{meta_item2} 
If the map $x\mapsto 2x$ is an automorphism of $G$, 
then it follows by \cite[Th. 1]{Se63} that there exists a constant $c>0$ 
for which $T^{\pi_2}$ extends to a unitary operator $L^2(G\times\widehat{G},\nu\times c\nu^*)\to\BB_2(\Hc)$. 
Now the assertion can be proved just as above. 
\end{proof}

The hypothesis that the map $x\mapsto 2x$ is an automorphism of $G$ from Proposition~\ref{meta}\eqref{meta_item2} 
is satisfied by many important examples of groups, as for instance 
the linear spaces $G=\R^d$
or the additive groups of local fields, like the $p$-adic fields $\mathbb Q_p$ 
which are interesting for quantization and pseudodifferential theory 
with applications to number theory, as shown in 
\cite{Ha}. 

We also note that the aforementioned hypothesis is quite natural 
inasmuch as it is satisfied if and only if 
an appropriate version of the Stone-von Neumann theorem holds. 
More precisely, according to \cite[Th. 1]{Ma65}, 
if we define the cocycle 
$$\sigma\colon(G\times\widehat{G})\times(G\times\widehat{G})\to\TT,
\quad \sigma((x,\xi),(y,\eta))=\langle\xi,y\rangle\overline{\langle\eta,x\rangle},$$
then the locally compact abelian group $G\times\widehat{G}$ 
has just one equivalence class of unitary irreducible projective representations with the cocycle~$\sigma$ 
if and only if the map $x\mapsto 2x$ is an automorphism of $G$. 
One projective representation of that type is just the map $\pi_2$ from our Proposition~\ref{meta}\eqref{meta_item2}.   
See also \cite[App. VIII]{Ne00} for a discussion of this circle of ideas.


A lot of extra structure is present due to the existence of a Fourier transform and to the structure theorem for locally compact abelian groups. 
In particular, besides choosing for the ingredient $\G$ the Bruhat-Schwartz space, 
there also exists a better choice relying on writting $G$ as $\mathbb R^m\times G_0$ with $G_0$ containing an open compact subgroup. 
We do not review the theory, for which we refer to \cite{GS} which also contains a lot of constructions and results involving a certain class of coorbit spaces, discretization techniques and Gabor frames. 
If $G=\R^m$ (i.e., $G_0$ is trivial) the emerging formalism boils down essentially to the Kohn-Nirenberg pseudodifferential calculus \cite{Fo}.

\subsection{Unitary representations of some infinite dimensional Lie groups}\label{lostrita}

Relevant references are \cite{Pe,BB1,BB2,
BB11a}, to which we refer for a full presentation.

The starting point is an unitary strongly continuous representation $\varpi:M\rightarrow\mathbb U(\H)$, where
$M$ is a locally convex  Lie group with the Lie algebra $\mathfrak m$ and a smooth exponential
$\exp_M:\mathfrak m\rightarrow M$. 
On the dual $\mathfrak m'$ of $\mathfrak m$ we consider the weak$^*$ topology.

We also fix a real finite dimensional vector space $\Si$ with dual $\Si'$ and a linear map
$\theta:\Si\rightarrow\mathfrak m$. The basic idea is to use $\pi:=\varpi\circ\exp_M\!\circ\,\theta:\Si\rightarrow\mathbb B(\H)$
as well as the Fourier transform $\,\widehat\cdot:\L^2(\Si)\rightarrow\L^2(\Si')$ in order to build
a very general form of the Weyl calculus. So one should set
\begin{equation}\label{omida}
\Op^{\varpi,\theta}(a)\equiv\Pi\(\widehat a\):=\int_\Si\widehat a(s)\,\varpi\[\exp_M(\theta(s))\]ds.
\end{equation}
The outcome was called in \cite{BB11a} {\it the localized Weyl calculus associated to the representation $\varpi$ along the linear map $\theta$}.
The single requirement needed to develop the basic part of the theory is orthogonality, i.e.
\begin{equation*}
\int_\Si \, |\!\<\varpi\[\exp_M(\theta(s))\]u,v\>\!|^2ds=\,\p\!u\!\p^2\,\p\!v\!\p^2,\ \quad\forall\,u,v\in\H.
\end{equation*}
Under this requirement, the general theory gives a definite sense to (\ref{omida}) at least for $\widehat a\in\mathscr B_2(\Si)$
and the constructions and results of sections \ref{vaca} and \ref{platica} are valid.
A good choice for the auxiliary space $\G$ is the space of smooth vectors of the representation~$\varpi$:
\begin{equation*}
\G\equiv\H_\infty:=\{u\in\H\mid M\ni m\mapsto\varpi(m)u\in\H\ {\rm is}\ C^\infty\}.
\end{equation*}
It carries a natural Fr\'echet topology \cite[Remark 2.1]{BB11a} in terms of "differential operators" indexed
by the universal associative enveloping algebra $U(\mathfrak m_\mathbb C)$ of the complexified Lie algebra
$\mathfrak m_\mathfrak C$. 
In the infinite dimensional case the denseness of $\H_\infty$ in $\H$ is not always
verified so we must require it. 
But as soon as this is achieved, all the results of the present article hold.
In \cite{BB11a} extra regularity assumptions are imposed in order to have good control upon the spaces involved.
One of the aims is to identify $\mathscr G(\Si)$ with the Schwartz space $\mathscr S(\Si)$ and
$\Op^{\varpi,\theta}\[\widehat{\mathscr G(\Si)}\]=\Pi\[\mathscr G(\Si)\]$ with the Fr\'echet
space $\mathbb B(\H)_\infty$ of all the smooth vectors (operators) under the continuous unitary representation
\begin{equation*}
\varpi^{(2)}:M\times M\rightarrow\mathbb B\[\mathbb B_2(\H)\],\
\quad \[\varpi^{(2)}(m,n)\]T:=\varpi(m)T\varpi(n)^{-1}.
\end{equation*}
Another one is to determine when $\mathscr B_2(\Si)=\L^2(\Si)$ holds.

The above general setting was studied in some detail in two specific situations: 
Firstly, when $M$ is a finite-dimensional connected and simply connected nilpotent Lie group, 
a situation which was pointed out in \cite{Pe} and will be discussed in subsection~\ref{Ped} below.  
Secondly, the case when $M$ is an infinite-dimensional Lie group that 
can be written as the semidirect product $\mathcal F\ltimes G$ between a (finite dimensional) connected nilpotent
Lie group $G$, with Lie algebra $\mathfrak g$, and a suitable (typically infinite dimensional) locally convex space $\mathcal F$
of smooth functions on $G$; 
this was studied in in \cite{BB1, BB11a}. 
The connection with the square-integrable families of operators is established 
by \cite[Cor. 4.7(3)]{BB11a}. 

An important particular case comes from the presence of a smooth magnetic field (i.e., a closed differential 2-form) on $G$, 
inasmuch as the aforementioned function space $\Fc$ should be invariant under left translations 
on $G$ and should contain the coefficients of the magnetic field as well as their derivatives of arbitrarily high order. 
This shows that if the magnetic field fails to have polynomial coefficients, then $\Fc$ is infinite-dimensional. 
In this situation an irreducible representation is given by 
$$\varpi\colon M=\Fc\rtimes G\to\BB(L^2(G)),\quad 
(\varpi(\phi,x)f)(y)=e^{i\phi(y)}f(x^{-1}y)$$
and it was proved in \cite{BB1} that although $M$ is an infinite-dimensional Lie group, 
its irreducible representation $\varpi$ can be obtained by the geometric quantization 
from a certain \emph{finite-dimensional} coadjoint orbit $\Oc$ of $M$. 
Moreover, the coadjoint orbit~$\Oc$ is symplectomorphic to the cotangent bundle $T^*G$. 
The appearance of the finite dimensional vector space $\Si$ from the above general framework 
is connected to that coadjoint orbit $\Oc$ and the linear mapping $\theta$ is assigned canonically to a vector potential
generating the magnetic field. 
Specifically, one may use $\Si=\mathfrak g\times\mathfrak g'$. 
The outcome of the operator calculus in this setting 
is an extension to nilpotent Lie groups of the magnetic Weyl calculus
briefly presented in subsection~\ref{avat}, which can be recovered for the Abelian group $G=(\mathbb R^n,+)$.

\subsection{Operator calculus for 
representations of nilpotent Lie groups}\label{Ped}
We will briefly describe some square-integrable operator-valued maps related to 
unitary irreducible representations of nilpotent Lie groups. 
This method was already used in the proof of Corollary~\ref{irred3} above.
The details of this construction can be found essentially in \cite{Pe}; see also \cite{BB2}. 

Let $G$ be any connected, simply connected, nilpotent Lie group with the Lie algebra~$\gg$.  
 Then the exponential map $\exp_G\colon\gg\to G$ is a diffeomorphism 
 with its inverse denoted by $\log_G\colon G\to\gg$. 
The adjoint action of $G$ is 
$$\Ad_G\colon G\times\gg\to\gg,\quad \Ad_G(g)x:=\frac{d}{dt}\Big\vert_{t=0}(g\exp_G(tx)g^{-1}) $$
 We denote by $\gg^*$ the linear dual space of $\gg$ and 
  by $\langle\cdot,\cdot\rangle\colon\gg^*\times\gg\to\RR$ 
  the natural duality pairing. 
  The coadjoint action of $G$ is 
$$\Ad^*_G\colon G\times\gg^*\to\gg^*, 
\quad (g,\xi)\mapsto \Ad^*_G(g)\xi=\xi\circ\Ad_G(g^{-1}). $$
Pick any $\xi_0\in\gg^*$ with its corresponding coadjoint orbit $\Oc:=\Ad_G^*(G)\xi_0\subseteq\gg^*$.  
 The isotropy group at $\xi_0$ is $G_{\xi_0}:=\{g\in G\mid\Ad_G^*(g)\xi_0=\xi_0\}$ 
 with the corresponding isotropy Lie algebra $\gg_{\xi_0}=\{X\in\gg\mid\xi_0\circ\ad_{\gg}X=0\}$.  
 
 Let $n:=\dim\gg$ and fix any sequence of ideals in $\gg$, 
$$\{0\}=\gg_0\subset\gg_1\subset\cdots\subset\gg_n=\gg$$
such that $\dim(\gg_j/\gg_{j-1})=1$ and $[\gg,\gg_j]\subseteq\gg_{j-1}$ 
for $j=1,\dots,n$. 
Pick any $X_j\in\gg_j\setminus\gg_{j-1}$ for $j=1,\dots,n$, 
so that the set $\{X_1,\dots,X_n\}$ will be a Jordan-H\"older basis in~$\gg$. 

The set of \emph{jump indices} of the coadjoint orbit $\Oc$ 
with respect to the above Jordan-H\"older basis is  
$e:=\{j\in\{1,\dots,n\}\mid \gg_j\not\subseteq\gg_{j-1}+\gg_{\xi_0}\}$ 
and does not depend on the choice of $\xi_0\in\Oc$. 
The corresponding \emph{predual of the coadjoint orbit}~$\Oc$ is  
$$\gg_e:=\spa\{X_j\mid j\in e\}\subseteq\gg$$
and it turns out that the map $\Oc\to\gg_e^*$, $\xi\mapsto\xi\vert_{\gg_e}$ is a diffeomorphism.

\begin{proposition}\label{WP}
Assume the above setting and 
let $\pi\colon G\to\BB(\Hc)$ be a fixed unitary irreducible representation 
associated with the coadjoint orbit $\Oc$. 
Then there exists a Lebesgue measure $\mu_e$ on $\gg_e$ 
for which, if we denote by $\mu$ the measure on $G$ obtained as the pusforward 
of $\mu_e$ by the map $\exp_G\vert_{\gg_e}\colon\gg_e\to G$, 
then $\pi\in\SQ(\BB(\Hc),\mu)$ and its corresponding operator 
$\Phi^\pi\colon\Hc\widehat{\otimes}\overline{\Hc}\to L^2(G,\mu)$ is unitary. 
\end{proposition}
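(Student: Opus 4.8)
The plan is to derive the statement from Pedersen's orthogonality relations for the matrix coefficients of irreducible representations of nilpotent Lie groups, and then to translate them into the language of Hypothesis~\ref{cal}. It is worth separating the two assertions at the outset: the membership $\pi\in\SQ(\BB(\Hc),\mu)$ is equivalent, by Remark~\ref{magar}, to the validity of the single scalar identity \eqref{porc}, i.e.\ to the fact that $\Phi^\pi$ extends to an \emph{isometry}; the unitarity of $\Phi^\pi$ is the genuinely stronger statement that this isometry is \emph{surjective}, equivalently that the matrix coefficients $\{\phi_{u,v}\mid u,v\in\Hc\}$ are total in $L^2(G,\mu)$, so that $\mathscr B_2(G)=L^2(G,\mu)$.

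For the first assertion I would fix a concrete model of $\pi$ by the Kirillov orbit method: choosing a polarization at $\xi_0$ subordinate to the Jordan--H\"older flag $\{0\}=\gg_0\subset\cdots\subset\gg_n=\gg$, one realizes $\pi$ as a monomial (induced) representation on $L^2(\RR^d)$ with $2d=\dim\Oc=|e|$, for which the space of smooth vectors is identified with the Schwartz space $\Sc(\RR^d)$. The point of this realization is that for smooth vectors $u,v$ the restriction of $\phi_{u,v}\circ\exp_G$ to the predual $\gg_e$ is again a Schwartz function, hence square integrable. Pedersen's theorem \cite[Th. 2.2.6--7]{Pe} then provides a normalization of Lebesgue measure $\mu_e$ on $\gg_e$ for which
$$\int_{\gg_e}|\langle\pi(\exp_G x)u,v\rangle|^2\,d\mu_e(x)=\Vert u\Vert^2\,\Vert v\Vert^2$$
holds for all smooth $u,v$. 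Pushing $\mu_e$ forward to the measure $\mu$ on $G$ along $\exp_G|_{\gg_e}$ turns this into \eqref{porc} on a dense subspace of $\Hc$; by the density statement in Remark~\ref{magar} this already yields $\pi\in\SQ(\BB(\Hc),\mu)$ together with the extension of $\Phi^\pi$ to an isometry.

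For the unitarity I would invoke the completeness half of \cite[Th. 2.2.6--7]{Pe}: the Weyl/Kirillov correspondence attached to $\Oc$ is a unitary equivalence between $L^2$ of the orbit and the Hilbert--Schmidt class $\BB_2(\Hc)$, where $L^2(\Oc)$ is identified with $L^2(\gg_e,\mu_e)=L^2(G,\mu)$ through the diffeomorphism $\Oc\to\gg_e^*$, $\xi\mapsto\xi|_{\gg_e}$. Under this correspondence the rank-one operator $\lambda_{u,v}$ has symbol a fixed scalar multiple of $\phi_{u,v}$, so the family $\{\phi_{u,v}\}$ spans a dense subspace; since $\Phi^\pi$ is an isometry with dense range, it is unitary.

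I expect the main obstacle to be bookkeeping rather than conceptual: one must check that the cross-section $\gg_e$ and the normalization of $\mu_e$ coming from Pedersen's conventions can be matched to the constant~$1$ demanded by \eqref{porc} (a renormalization of $\mu_e$ is permitted, as noted after \eqref{SQ_def}), and that the identification $L^2(\Oc)\cong L^2(G,\mu)$ intertwines the Weyl correspondence with $\Phi^\pi$ up to the scalar that makes the isometry and the surjectivity statements hold simultaneously. The completeness (surjectivity) part is the more delicate of the two and is exactly where the full strength of Pedersen's result --- as opposed to the mere orthogonality relation already used in Corollary~\ref{irred3} --- is needed.
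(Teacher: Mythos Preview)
Your proposal is correct and rests on the same source as the paper's proof, namely Pedersen's theorems \cite[Th.~2.2.6--7]{Pe}. The packaging differs slightly. You split the argument into two halves (orthogonality relations $\Rightarrow$ isometry of $\Phi^\pi$; completeness of the Weyl correspondence $\Rightarrow$ surjectivity), invoking a concrete Schr\"odinger model and the Schwartz-space description of smooth vectors along the way. The paper instead quotes Pedersen in the form ``the integrated operator $T^\pi f=\int_G f(s)\pi(s)\,d\mu(s)$ extends to a unitary $L^2(G,\mu)\to\BB_2(\Hc)$'' and then, exactly as in the proof of Proposition~\ref{meta}, passes to adjoints/conjugates and uses Corollary~\ref{adjoint} to identify this unitary with $(\Phi^\pi\circ\Lambda^{-1})^*$; unitarity of $\Phi^\pi$ then falls out in one stroke without ever separating isometry from surjectivity or choosing a model for $\pi$. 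Your route is perfectly sound (and arguably makes the role of the smooth vectors and the density argument more transparent), while the paper's route is shorter and reuses machinery already set up in Sections~\ref{vacuta} and~\ref{babusca}.
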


\begin{proof}
It easily follows by \cite[Th. 2.2.6--7]{Pe} that there exists a Lebesgue measure $\mu_e$ on $\gg_e$ 
for which if we define the measure $\mu$ as in the above statement, 
then the operator $T^\pi\colon L^1(G,\mu)\cap L^2(G,\mu)\to\BB(\Hc)$ given by 
$$T^\pi f
=\int_{\gg_e}f(\exp_G x)\pi(\exp_G x) d\mu_e(x)
=\int_G f(s)\pi(s) d\mu(s)$$
extends to a unitary operator $L^2(G,\mu)\to\BB_2(\Hc)$.  
One then obtains the assertion by the same method as in the proof of Proposition~\ref{meta}. 
\end{proof}

The space of smooth vectors  
$\Hc_\infty:=\{v\in\Hc\mid \pi(\cdot)v\in\Ci(G,\Hc)\}$ 
is a Fr\'echet space in a natural way and is a dense linear subspace of $\Hc$ 
which is invariant under the unitary operator $\pi(g)$ for every $g\in G$.  
We denote by $\Hc_{-\infty}$ the space of all continuous antilinear functionals on $\Hc_\infty$ 
and 
then we have the natural inclusions $\Hc_\infty\hookrightarrow\Hc\hookrightarrow\Hc_{-\infty}$. 

Now consider the unitary representation 
$\pi \otimes {\bar \pi}\colon G\times G\to\BB(\BB_2(\Hc))$ defined by 
$$(\forall g_1,g_2\in G)(\forall T\in\BB_2(\Hc))\quad 
(\pi \otimes {\bar \pi})(g_1,g_2)T=\pi(g_1)T\pi(g_2)^{-1}.$$
It is well-known that $\pi \otimes {\bar \pi}$ is strongly continuous.  
The corresponding space of smooth vectors is denoted by $\Bc(\Hc)_\infty$ 
and is called the space of smooth operators for the representation~$\pi$. 
One can prove that actually $\Bc(\Hc)_\infty\subseteq\BB_1(\Hc)$. 

Since 
$\{\langle\cdot, f_1\rangle f_2\mid f_1,f_2\in\Hc_\infty\}
\subseteq\Bc(\Hc)_\infty\subseteq\BB_1(\Hc)$ and $\Hc_\infty$ 
is dense in $\Hc$, we obtain continuous inclusion maps 
\begin{equation*}
\Bc(\Hc)_\infty\hookrightarrow\BB_1(\Hc)\hookrightarrow\Bc(\Hc)
\hookrightarrow\Bc(\Hc)_\infty^*,
\end{equation*}
where the latter mapping is constructed by using 
the well-known isomorphism $(\BB_1(\Hc))^*\simeq\Bc(\Hc)$ 
given by the usual semifinite trace on $\Bc(\Hc)$.

We conclude by noting that the version in the present setting 
of the above dequantization formula from 
Corollary~\ref{fitofag} corresponds to \cite[Th. 2.2.6]{Pe}. 

\bigskip
\bigskip
\leftline{\textit{Acknowledgement}}
We thank the referee for numerous remarks and suggestions that helped us improve the presentation.

\end{document}